\documentclass[bj,16pt]{imsart}
\setattribute{journal}{name}{}
\usepackage{latexsym,epsfig,amssymb,amsmath,amsthm,url,bbm,enumerate,float,dsfont}
\usepackage{verbatim}
\usepackage[usenames,dvipsnames]{color}
\usepackage{longtable}
\usepackage{graphicx,hyperref}
\RequirePackage[square, numbers, sort & compress]{natbib}
\allowdisplaybreaks \setlength{\textwidth}{6.5in}
\setlength{\textheight}{8.5in} \setlength{\topmargin}{0.25in}
\setlength{\headheight}{0in} \setlength{\oddsidemargin}{0in}
\setlength{\evensidemargin}{0in} \flushbottom 
\numberwithin{equation}{section}
\setlength{\marginparwidth}{0.8in}

\usepackage{cleveref}

\newtheorem{Theorem}{Theorem}[section]
\crefname{Theorem}{Theorem}{Theorems}
\Crefname{Theorem}{Theorem}{Theorems}
\newtheorem{Corollary}[Theorem]{Corollary}
\crefname{Corollary}{Corollary}{Corollaries}
\Crefname{Corollary}{Corollary}{Corollaries}
\newtheorem{Proposition}[Theorem]{Proposition}
\crefname{Proposition}{Proposition}{Propositions}
\Crefname{Proposition}{Proposition}{Propositions}
\newtheorem{Lemma}[Theorem]{Lemma}
\crefname{Lemma}{Lemma}{Lemmata}
\Crefname{Lemma}{Lemma}{Lemmata}

\crefname{Assumption}{Assumption}{Assumptions}

\Crefname{Assumption}{Assumption}{Assumptions}

\usepackage{chngcntr}
\counterwithin{figure}{section}

\newtheoremstyle{note}%
{3pt}%
{3pt}%
{}%
{}%
{\bf}%
{}%
{.5em}%
{\thmname{#1}\thmnumber{ #2} \thmnote{\sc{(#3)}}}

\theoremstyle{note}
\newtheorem{Definition}[Theorem]{Definition}
\crefname{Definition}{Definition}{Definitions}
\Crefname{Definition}{Definition}{Definitions}
\newtheorem{Remark}[Theorem]{Remark}
\crefname{Remark}{Remark}{Remarks}
\Crefname{Remark}{Remark}{Remarks}

\newtheorem{Example}[Theorem]{Example}
\crefname{Example}{Example}{Examples}
\Crefname{Example}{Example}{Examples}

\newtheorem{assumptionletter}{Assumption}

\newtheorem{modelletter}[assumptionletter]{Model}

\def\la{\leftarrow}

\def\cont{\stackrel{t \to \infty}{\to}}
\def\conv{\stackrel{v}{\to}}
\def\cinP{\stackrel{P}{\to}}

\def\weak{\Rightarrow}

\def\C{\mathbb{C}}

\def\E{\mathbf{E}}
\def\Esp{\mathbb{E}}

\def\M{\mathbb{M}}

\def\P{{\rm{Pr}}}
\def\R{\mathbb{R}}

\def\N{\mathbb{N}}

\def\bV{\boldsymbol V}

\def\bY{\boldsymbol Y}
\def\bZ{\boldsymbol Z}

\def\dx{\mathrm d x}
\def\dy{\mathrm d y}

\def\RV{\mathcal{RV}}
\def\MRV{\mathcal{MRV}}
\def\HRV{\mathcal{HRV}}

\def\MES{\text{MES}}
\def\VaR{\text{VaR}}
\def\MME{\text{MME}}

\definecolor{darkred}{RGB}{139,0,0}
\definecolor{darkgreen}{RGB}{0,100,0}

\newcommand{\ov}{\overline}

\newcommand{\wh}{\widehat}

\newcommand{\1}{{\mathds{1}}}


\begin{document}


\begin{frontmatter}

\title{Risk contagion under regular variation and asymptotic tail independence}

\begin{aug}
  \author{\fnms{Bikramjit}  \snm{Das}\ead[label=e1]{bikram@sutd.edu.sg}}\thanksref{T1}
 \and
 \author{\fnms{Vicky} \snm{Fasen} \ead[label=e2]{vicky.fasen@kit.edu}}
 \thankstext{T1}{B. Das gratefully acknowledges support from MOE Tier 2 grant MOE-2013-T2-1-158. B. Das also acknowledges
  hospitality and support from Karlsruhe Institute of Technology during a visit in June 2015.}

  \runauthor{B. Das \and V. Fasen}

  \affiliation{SUTD\thanksmark{m1} \and Karlsruhe Institute of Technology\thanksmark{m2}}

  \address{Singapore University of Technology and Design\\8 Somapah Road, Singapore 487372 \\
           \printead{e1}}

  \address{Karlsruhe Institute of Technology\\ Englerstra{\ss}e 2, 76131 Karlsruhe\\
          \printead{e2}}

\end{aug}

\begin{abstract}
Risk contagion concerns  any entity dealing with large scale risks. Suppose   $\bZ=(Z_1, Z_2)$ denotes a risk vector pertaining to two components in some system.
A relevant measurement of risk contagion would be to quantify the amount of influence of high values of $Z_2$  on $Z_1$.
This can be measured in a variety of ways.
In this paper, we study  two such measures: the quantity $\E[(Z_1-t)_+|Z_2>t]$ called \emph{Marginal Mean Excess} (MME)  as well as  the related quantity $\E[Z_1|Z_2>t]$  called
\emph{Marginal Expected Shortfall} (MES). Both quantities are indicators of risk contagion and useful in various applications  ranging from finance,  insurance and systemic risk to environmental and climate risk.
We work under the assumptions of multivariate regular variation, hidden regular variation  and asymptotic tail independence for the risk vector $\bZ$.
Many broad and useful model classes satisfy these assumptions. We present several examples and derive the asymptotic behavior of both MME and MES as the threshold  $t\to\infty$.
We observe that although we assume asymptotic tail independence in the models, MME and MES  converge to $\infty$ under very general conditions; this reflects that the underlying
weak dependence in the model still remains significant.
Besides the  consistency of the empirical estimators  we introduce an extrapolation method based on extreme value theory
to estimate both  MME and MES for high thresholds $t$
where little data are available.
We show that these estimators are consistent and illustrate our methodology in both simulated and real data sets.
 \end{abstract}

\begin{keyword}[class=AMS]
\kwd[Primary ]{62G32}
\kwd{62-09}
\kwd{60G70}
\kwd[; secondary ]{62G10}
\kwd{62G15}
\kwd{60F05}
\end{keyword}

\begin{keyword}
\kwd{asymptotic tail independence}
\kwd{consistency}
\kwd{expected shortfall}
\kwd{heavy-tail}
\kwd{hidden regular variation}
\kwd{mean excess}
\kwd{multivariate regular variation}
\kwd{systemic risk}
\end{keyword}

\end{frontmatter}

\section{Introduction}\label{sec:intro}

  The presence of heavy-tail phenomena in data arising from a broad range of  applications spanning   hydrology \citep{anderson:meerschaert:1998}, finance
  \citep{smith:2003}, insurance
  \citep{embrechts:kluppelberg:mikosch:1997}, internet traffic
  \citep{crovella:bestavros:taqqu:1998,Resnick:2007}, social networks and random
  graphs \citep{durrett:2010, bollobas:borgs:chayes:riordan:2003}
  and  risk
management \citep{das:embrechts:fasen:2013,ibragimov:jaffee:walden:2011}
 is well-documented. Since heavy-tailed distributions often entail non-existence of some higher order moments, measuring and assessing dependence in jointly heavy-tailed random variables poses a few challenges.
Furthermore, one often encounters the phenomenon of \emph{asymptotic tail independence} in the upper tails; which means given two jointly distributed heavy-tailed random variables, joint occurrence of very high (positive) values is extremely unlikely.

  In this paper, we look at heavy-tailed random variables under the paradigm of \emph{multivariate regular variation} possessing asymptotic tail independence in the upper tails and we study the average behavior of one of the variables given that the other one is large in an asymptotic sense. The presence of asymptotic tail independence might intuitively indicate that high values of one variable will have little influence on the expected behavior of the other; we observe that such a behavior is not always true. In fact, under a quite general set of conditions, we are able to calculate the  asymptotic behavior of the expected value of a variable given that the other one is high.

 A major application of assessing such a behavior is in terms of computing systemic risk, where one wants to assess risk contagion among two risk factors in a system. Proper quantification of systemic risk has been  a topic of active research in the past few years; see  \cite{adrian:brunnermeier:2013, Biagini, Eisenberg, Feinstein,Brunnermeier:Cheridto:2014,Mainik:Schaaning} for further details. Our study concentrates on two such measures of risk in a bivariate set-up where both factors are heavy-tailed and possess asymptotic tail independence. Note that our notion of risk contagion refers to the effect of one risk on another and vice versa.   Risk contagion has other connotations which we do not address here; for example, it appears in causal models with time dependencies; see \cite{galiardini:gourieroux:2013} for a brief discussion. 

 First recall that for a random variable $X$ and $0<u<1$ the Value-at-Risk (VaR) at level $u$ is the quantile function $$\VaR_u(X):= \inf\{x\in\R: \P(X > x) \le 1- u \} = \inf\{x\in\R: \P(X \le x) \ge u \}.$$
Suppose  $\bZ=(Z_1, Z_2)$ denotes risk related to two different components of a system. 
We study the behavior of two related quantities which capture the
   expected behavior of one risk, given that the other risk is high.
        \begin{Definition}[Marginal Mean Excess] \label{definition:MME}
   For a random  vector $\bZ=(Z_1,Z_2)$ with $\E|Z_1|<\infty$ the \emph{Marginal Mean Excess} (MME) at level $p$ where $0<p<1$ is defined as:
  \begin{align}\label{def:MME}
  \MME(p) = \E\left[\left(Z_1-\VaR_{1-p}(Z_2)\right)_+ \big|Z_2>\VaR_{1-p}(Z_2)\right].
  \end{align}
   We interpret the MME as the expected excess of one risk $Z_{1}$ over the Value-at-Risk of $Z_2$ at level $(1-p)$
   given that the value of $Z_{2}$ is already greater than the same Value-at-Risk.
   \end{Definition}

   \begin{Definition}[Marginal Expected Shortfall] \label{definition:MES}
   For a random  vector $\bZ=(Z_1,Z_2)$  with $\E|Z_1|<\infty$ the \emph{Marginal Expected Shortfall} (MES) at level $p$ where $0<p<1$ is defined as:
  \begin{align}\label{def:MES}
  \MES(p) = \E\left[Z_1|Z_2>\VaR_{1-p}(Z_2)\right].
  \end{align}
   We interpret the MES as the expected shortfall of one risk given that the other risk is higher than its Value-at risk at level $(1-p)$. Note that  smaller values of $p$ lead to higher values of $\VaR_{{1-p}}$.
   \end{Definition}

 In the context of systemic risk, we may think of the conditioned variable $Z_2$ to be the risk of the entire system (for example, the entire market) and the variable $Z_1$ as one component of the risk (for example, one financial institution). Hence, we are interested in the average or expected behavior of one specific component when the entire system is in distress. Although the problem is set up  in a systemic risk context, the asymptotic behaviors of $\MME$ and $\MES$ are of interest in scenarios of risk contagion in a variety of disciplines.

Clearly, we are interested in computing both $\MME(p)$ and $\MES(p)$  for small values of $p$, which translates to $Z_2$ being over a high threshold $t$. In other words we
are interested in estimators of  $\E[(Z_1-t)_{+}| Z_2 >t]$ (for the MME) and  $\E[Z_1| Z_2 >t]$ (for the MES) for large values of $t$.
An estimator for $\MES(p)$ has been proposed by \cite{cai:einmahl:dehaan:zhou:2015} which is based on
the asymptotic behavior of $\MES(p)$; if $Z_1\sim F_1$ and $Z_2\sim F_2$, define
\begin{align}\label{def:R}
   R(x,y) := \lim_{t\to\infty}t\,\P\left(1-F_1(Z_1) \le \frac xt, 1-F_2(Z_2)\le \frac yt \right)
   \end{align}
for $(x,y) \in \left[0,\infty\right)^2$.
  It is shown in \cite{cai:einmahl:dehaan:zhou:2015}  that
\begin{eqnarray} \label{i:1}
    \lim_{p\to 0}\frac{1}{\VaR_{1-p}(Z_1)}\MES(p)=\int_0^{\infty} R(x^{-\alpha_1},1) \;\mathrm dx
\end{eqnarray}
 if $Z_1$ has a regularly varying tail with tail parameter $\alpha_1$. In \cite{Joe:Li} a similar result is presented under the further assumption of
  multivariate regular variation of the vector $\bZ=(Z_1,Z_{2})$; see \cite{Zhu:li:2012, Hua:Joe:2012}
  as well in this context.   Under the same assumptions, we can check that
 \begin{eqnarray} \label{i:2}
    \lim_{p\to 0}\frac{1}{\VaR_{1-p}(Z_1)}\MME(p)=\int_c^{\infty} R(x^{-\alpha_1},1) \;\mathrm dx
\end{eqnarray}
where $$c = \lim\limits_{p\to 0} \frac{\VaR_{1-p}(Z_{2})}{\VaR_{1-p}(Z_{1})}, $$ if $c$ exists and is finite. For $c$ to be finite we require that $Z_{1}$ and $Z_{2}$ are (right) tail equivalent ($c>0$) or $Z_{2}$ has a lighter (right) tail than $Z_{1}$ ($c=0$). Note that in both \eqref{i:1} and \eqref{i:2}, the rate of increase of the risk measure
is determined by the tail behavior of $Z_1$; the tail behavior of $Z_2$ has no apparent influence.
However, these results make sense only when the right hand sides of \eqref{i:1} and
\eqref{i:2} are both non-zero and finite. Thus, we obtain that  as $p\downarrow 0$,
\begin{eqnarray*}
    \MME(p)\sim\text{const. }\VaR_{1-p}(Z_1),
    \quad \text{ and } \quad
    \MES(p)\sim\text{const. }\VaR_{1-p}(Z_1).
\end{eqnarray*}
Unfortunately, if $Z_1, Z_2$ are asymptotically upper tail  \emph{{independent}} then $R(x,y)\equiv0$ (see \cref{Remark:3} below)
which implies that the limits in \eqref{i:1} and \eqref{i:2} are both $0$ as well and hence, are  not that useful.

Consequently, the results in \cite{cai:einmahl:dehaan:zhou:2015} make sense only if the random vector $\bZ$ has positive upper tail dependence,
 which means that, $Z_1$ and $Z_2$ take high values together with a positive probability; examples of multivariate regularly varying random vectors producing such strong
 dependence can be found in \cite{Hua:Joe:2014}.
 A classical example for asymptotic tail independence, especially in financial risk modeling,  is when  the risk factors $Z_1$ and $Z_2$ are both
 Pareto-tailed with a Gaussian copula and any correlation
$\rho<1$ \citep{das:embrechts:fasen:2013}; this model has asymptotic upper tail  independence leading to $R\equiv 0$. The results in
\eqref{i:1} and \eqref{i:2} respectively, and hence, in \cite{cai:einmahl:dehaan:zhou:2015} provide a null estimate which is not very informative. Hence, in such a case one might be inclined to believe that
 $\E(Z_1|Z_2>t)\sim\E(Z_1)$ and $\E((Z_1-t)_+|Z_2>t)\sim 0$ as $Z_1$ and $Z_2$
are asymptotically tail independent.
However, we will see that depending on the Gaussian copula parameter $\rho$
we might even have
$\lim_{t\to\infty}\E((Z_1-t)_+|Z_2>t)=\infty$. Hence, in this case it would be nice if we could find  the right rate of convergence of $\MME(p)$ to a non-zero constant.

In this paper we investigate the asymptotic behavior of $\MME(p)$ and $\MES(p)$ as $p\downarrow 0$ under the assumption of regular variation and hidden regular variation
of the risk vector $\bZ$ exhibiting asymptotic upper tail independence. We will see that for a very general class of models
$\MME(p)$ and $\MES(p)$, respectively behave like a regularly varying function with negative index for $p\downarrow 0$, and hence, converge to $\infty$
although the tails are asymptotically tail independent. However, the rate of convergence is slower than in the asymptotically tail dependent
case as presented in  \cite{cai:einmahl:dehaan:zhou:2015}. This result is an  interplay between the tail behavior and the strength of dependence of the two variables in the tails.
The behavior of MES in the asymptotically  tail independent case has been addressed to some extent in
\cite[Section 3.4]{Hua:Joe:2014} for certain copula structures with Pareto margins.  We address the asymptotically tail independent case in further generality. For the MME, we can provide results with fewer technical assumptions
than for the case of MES and hence, we cover a broader class of asymptotically tail independent
models. The knowledge of the asymptotic behavior of the MME and the MES helps us in proving
consistency of their empirical estimators. However, in a situation where  data are scarce or even unavailable in the tail region of interest,
an empirical estimator is clearly unsuitable. Hence, we also provide consistent estimators using methods from extreme value theory which work when data availablility is limited in the tail regions.

The paper is structured as follows: In Section \ref{sec:prelim} we briefly discuss the notion of multivariate and hidden regular variation. We also list a set of assumptions that we impose on our models in order to obtain limits of the quantities MME and MES under appropriate scaling.  The main results of the paper regarding the asymptotic behavior of the MME and the MES
 are discussed in \Cref{sec:asymptotic}.  In  Section \ref{subsec:Models}, we illustrate a few  examples which satisfy the assumptions under which we can compute asymtptoic limits of MME and MES; these include  additive models, the Bernoulli mixture model for generating hidden regular variation and a few copula models. Estimation methods for the risk measures MME and MES  are provided
  in Section \ref{sec:estimation}.
Consistency of the empirical estimators are the topic of \Cref{subsec:empirical}, whereas, we present
consistent estimators based on methods from extreme value theory in \Cref{subsec:EVT estimators}. 
Finally, we validate our method on real and simulated data in Section \ref{sec:simulation} with brief concluding remarks in \Cref{sec:conclusion}.

In the following we denote by $\conv$ vague convergence of measures, by
    $\weak$  weak convergence of measures and by
    $\cinP$  convergence in probability. For $x\in \R$, we write $x_{+} = \max(0,x)$.



\section{Preliminaries} \label{sec:prelim}
 For this paper we restrict our attention to non-negative random variables in a bivariate setting. We discuss multivariate and hidden regular variation in Section \ref{subsec:regvar}. 
A few technical assumptions that  we use throughout the paper are listed in Section \ref{subsec:assumptions}.  A selection of model examples that  satisfy  our assumptions is relegated to \Cref{subsec:Models}.


\subsection{Regular variation} \label{subsec:regvar}


First, recall that a  measurable function $f:(0,\infty)\to(0,\infty)$ is
 \textit{regularly varying} at $\infty$ with index $\rho \in \R$ if $$\lim_{t\to\infty} \frac{f(tx)}{f(t)}=x^{\rho}$$ for any $x>0$
and we write $f\in\RV_\rho$. If the index of regular variation is $0$ we call the function
slowly varying as well.
Note that in contrast, we say $f$ is regularly varying at
$0$ with index $\rho$ if $\lim_{t\to 0}f(tx)/f(t)=x^{\rho}$ for any $x>0$.  In this paper, unless otherwise specified, regular variation means regular variation at infinity. A random variable $X$ with distribution function $F$ has a regularly varying tail if $\overline{F}=1-F \in \RV_{-\alpha}$ for some $\alpha\ge 0$. We often write  $X\in \RV_{-\alpha}$ by abuse of notation.

 We use the notion of $\M$-convergence  to define regular variation in more than one dimension; for further details see \cite{lindskog:resnick:roy:2014,hult:lindskog:2006a,das:mitra:resnick:2013}. We restrict  to two dimensions here since we deal with bivariate distributions in this paper, although the definitions provided hold in general for any finite dimension.
Suppose $\C_0 \subset \C \subset \left[0,\infty\right)^2$ where
$\C_0$ and $\C$ are closed cones containing $\{(0,0)\} \in\R^2$.
By  $\M(\C\setminus\C_0)$ we denote the class of Borel measures on $\C\setminus\C_0$ which are
finite on subsets bounded away from  $\mathbb{C}_0$.
Then $\mu_n \stackrel{\M}{\to}\mu$ in $\M(\C \setminus \C_0)$ if $\mu_n(f)\to\mu(f)$ for all continuous
and bounded functions on $\C \setminus \C_0$ whose supports are bounded away from $\C_0$.

\begin{Definition}[Multivariate regular variation]\label{def:mrv}
 A random vector ${\bZ } =(Z_1,Z_2) \in \C$ is \emph{(multivariate) regularly
varying} on $\C \setminus \C_0$,
 if there {exist} a function $b(t) \uparrow \infty$ and a non-zero measure {$\nu(\cdot)\in \M(\C \setminus \C_0)$} such that as $t \to \infty$,
 \begin{equation}\label{eqn:ccc}
 \nu_{t}(\cdot):=t\,\P({{\bZ }}/{b(t)} \in \; \cdot \; ) \stackrel{\M}{\to} \nu(\cdot) \hskip 0.5 cm \text{in {$\M(\C \setminus \C_0)$.}}
\end{equation}
Moreover, we can check that the limit measure has the homogeneity property: $\nu(cA)=c^{-\alpha}\nu(A)$ for some $\alpha>0$. We write $\bZ\in \MRV(\alpha, b,\nu, \C \setminus \C_0)$ and sometimes write MRV for multivariate regular variation.
\end{Definition}

In the first stage, multivariate regular variation is defined on the space $\Esp=\left[0,\infty\right)^2\setminus\{(0,0)\} = \C \setminus \C_0$ where $\C=\left[0,\infty\right)^2$ and $\C_0=\{(0,0)\}$.
But sometimes we need to define further regular variation on subspaces of $\Esp$, since the limit measure $\nu$ as obtained in
\eqref{eqn:ccc} turns out to be concentrated on a subspace of $\Esp$. The most likely way this happens is through   \emph{asymptotic tail independence} of random variables.

\begin{Definition}[asymptotic tail independence]\label{def:asyind}
A random vector $\bZ =(Z_1, Z_2) \in \left[0,\infty\right)^2$ is called \linebreak \emph{asymptotically independent (in the upper tail)}  if
\begin{eqnarray*}
    \lim_{p\downarrow 0}\P(Z_2>F_{2}^{\leftarrow}(1-p)|Z_1>F_{1}^{\leftarrow}(1-p))=0,
\end{eqnarray*}
where $Z_{i}\sim F_{i}, i=1,2$.
\end{Definition}
Asymptotic upper tail independence can be interpreted in terms of the survival copula of $\bZ$ as well.  Assume (w.l.o.g.) that $F_{1}$, $F_{2}$ are strictly increasing continuous distribution functions with
unique survival copula $\wh C$ (see \cite{Nelsen}) such that
\begin{eqnarray*}
    \P(Z_1>x,Z_2>y)=\wh C(\ov F_{1}(x),\ov F_{2}(y)) \quad  \text{for }(x,y)\in\R^2.
\end{eqnarray*}
Hence, in terms of the survival copula, asymptotic upper tail independence of $\bZ$ implies
\begin{align}\label{eq:AI_surv}
\lim_{p\downarrow 0}\frac{\wh C(p,p)}{p} & =  \lim_{p\downarrow 0} \frac{\Pr(Z_{1}>\ov F_{1}^{\leftarrow}(p),Z_{2}>\ov F_{2}^{\leftarrow}(p))}{\Pr(Z_{1}>\ov F_{1}^{\leftarrow}(p))} =  \lim_{p\downarrow 0} \Pr(Z_{2}> F_{2}^{\leftarrow}(p))|Z_{1}> F_{1}^{\leftarrow}(1-p)) =0.
\end{align}

Independent random vectors are trivially asymptotically tail independent.
 Note that  asymptotic upper tail independence of $\bZ\in \MRV(\alpha, b,\nu, \Esp)$ implies
 $\nu((0,\infty)\times(0,\infty))=0$ for the limit measure $\nu$.
 On the other hand, for the converse,  if $Z_1$ and $Z_2$ are both marginally regularly varying in the right tail with $\lim_{t\to\infty} \P(Z_1>t)/\P(Z_2>t) =1$, then $\nu((0,\infty)\times(0,\infty))=0$
 implies asymptotic upper tail independence as well (see \cite[Proposition 5.27]{Resnick:1987}). However, this implication does not hold true in general, e.g.,
for a regularly varying random variable $X \in \RV_{-\alpha}$ the random vector $(X,X^2)$ is multivariate regularly varying
with limit measure  $\nu((0,\infty)\times(0,\infty))=0$; but of course $(X,X^2)$ is asymptotically tail-dependent.

\begin{Remark} \label{Remark:3}
Asymptotic upper tail independence of $(Z_1,Z_2)$ implies that   
\begin{align*}
    R(x,y) &=\lim_{t\to\infty}t\, \P \left(1- F_1(Z_1) \le x/t, 1-F_2(Z_2) \le y/t \right)\\
        & =\lim_{t\to\infty}t\, \wh C\left(\frac{x}{t},\frac{y}{t}\right)
        \leq \max(x,y)\lim_{s\to 0}\frac{\wh C(s,s)}{s}=0 \quad (\text{using} \;\; \eqref{eq:AI_surv}).
\end{align*}
Hence,  the estimator presented in \cite{cai:einmahl:dehaan:zhou:2015} for MES
provides a trivial estimator in this setting.
\end{Remark}

  Consequently, in the asymptotically tail independent case where the tails are equivalent
 we would approximate the joint tail probability by
 $
    \P(Z_2>x|Z_1>x)\approx 0
 $
 for large thresholds $x$ and conclude that risk contagion between $Z_1$ and $Z_2$ is absent. This conclusion may be naive;
 hence the notion of \emph{hidden regular variation}  on $\Esp_0= \left[0,\infty\right)^2\setminus (\{0\}\times\left[0,\infty\right) \cup \left[0,\infty\right)\times\{0\}) =(0,\infty)^2$  was introduced in \cite{resnick:2002}.
{Note that we do not  assume that the marginal tails of $\bZ$ are necessarily equivalent in order to define hidden regular variation, which is usually done in \cite{resnick:2002}.
\begin{Definition}[Hidden regular variation]\label{def:hrv}
A regularly varying random vector $\bZ$ on $\Esp$ possesses \emph{hidden regular variation} on $\Esp_0 = (0,\infty)^2$ with index $\alpha_0\, (\ge \alpha >0)$ if there exist
 scaling functions $b(t)\in \RV_{1/\alpha}$ and $b_0(t)\in \RV_{1/\alpha_0}$
with $b(t)/b_0(t)\to\infty$ and limit measures $\nu,\nu_0$ such that
\begin{eqnarray*}
    \bZ \in \MRV({\alpha}, b, \nu, \Esp)\cap \MRV({\alpha_0}, b_0, \nu_0, \Esp_0).
\end{eqnarray*}
We write $\bZ\in \HRV({\alpha_0}, b_0, \nu_0)$ and sometimes write HRV for hidden regular variation.
\end{Definition}

For example, say $Z_1, Z_2$ are iid random variables with distribution function $F(x)=1-x^{-1}, x>1$.  Here $\bZ=(Z_1,Z_2)$ possesses MRV on $\Esp$, asymptotic tail independence and HRV on $\Esp_0$. Specifically,  $\bZ\in \MRV(\alpha=1, b(t)=t, \nu, \Esp) \cap \MRV(\alpha_{0}=2, b_{0}(t)=\sqrt{t}, \nu_0, \Esp_{0})$ where for $x>0,y>0$, $$\nu(([0,x]\times[0,y])^c) = \frac{1}{x} + \frac 1y \quad \text{ and } \quad \nu_0(\left[x,\infty\right)\times\left[y,\infty\right)) = \frac{1}{xy}.$$

\begin{Lemma}
    $\bZ \in \MRV({\alpha}, b, \nu, \Esp)\cap \HRV({\alpha_0}, b_0, \nu_0,\Esp_{0})$ implies that $\bZ$ is asymptotically
    tail independent.
\end{Lemma}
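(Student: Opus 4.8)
The plan is to establish asymptotic tail independence through its survival-copula characterization \eqref{eq:AI_surv}, i.e.\ to show $\wh C(p,p)/p\to 0$ as $p\downarrow 0$, by exploiting the mismatch between the two scaling functions $b\in\RV_{1/\alpha}$ and $b_0\in\RV_{1/\alpha_0}$ with $b(t)/b_0(t)\to\infty$. Writing $u_i(p)=F_i^{\leftarrow}(1-p)$ (and taking the $F_i$ continuous, so $\ov F_i(u_i(p))=p$), we have $\wh C(p,p)=\P(Z_1>u_1(p),Z_2>u_2(p))$. The first move is to pass to a \emph{common} threshold, which is what makes the argument robust to non-equivalent margins. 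Set $m=m(p)=\min(u_1(p),u_2(p))\to\infty$. Monotonicity of the joint survival function gives $\wh C(p,p)\le \P(Z_1>m,Z_2>m)$, while $\ov F_i(m)\ge \ov F_i(u_i(p))=p$ for both $i$, with equality for whichever index realizes the minimum; hence $\min(\ov F_1(m),\ov F_2(m))=p$. Consequently
\begin{align*}
\frac{\wh C(p,p)}{p}\le \frac{\P(Z_1>m,Z_2>m)}{\min(\ov F_1(m),\ov F_2(m))},
\end{align*}
and it suffices to prove that the right-hand side tends to $0$ along every sequence $m\to\infty$.

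Next I would read off the two rates from the hypotheses at the common level $m$. By hidden regular variation on $\Esp_0$, choosing $t=b_0^{\leftarrow}(m)$ (so $b_0(t)\sim m$) gives $\P(Z_1>m,Z_2>m)\sim \nu_0([1,\infty)^2)/b_0^{\leftarrow}(m)$, a function regularly varying of index $-\alpha_0$ since $b_0^{\leftarrow}\in\RV_{\alpha_0}$. By the marginal multivariate regular variation on $\Esp$, $\ov F_i(m)\sim \nu_i/b^{\leftarrow}(m)$ with $\nu_1:=\nu([1,\infty)\times[0,\infty))$ and $\nu_2:=\nu([0,\infty)\times[1,\infty))$, both positive for nondegenerate margins, so each denominator term is regularly varying of index $-\alpha$. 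Combining these,
\begin{align*}
\frac{\P(Z_1>m,Z_2>m)}{\min(\ov F_1(m),\ov F_2(m))}\sim \frac{\nu_0([1,\infty)^2)}{\min(\nu_1,\nu_2)}\,\frac{b^{\leftarrow}(m)}{b_0^{\leftarrow}(m)},
\end{align*}
so everything reduces to the ratio of the two asymptotic inverses.

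It then remains to show $b^{\leftarrow}(m)/b_0^{\leftarrow}(m)\to 0$, and this is exactly where $b(t)/b_0(t)\to\infty$ is used. Fixing $M>1$, for large $t$ we have $b(t)\ge M\,b_0(t)$; writing $w=b(t)$ and using $b_0^{\leftarrow}\in\RV_{\alpha_0}$, one gets $b^{\leftarrow}(w)=t\le b_0^{\leftarrow}(w/M)\sim M^{-\alpha_0}\,b_0^{\leftarrow}(w)$, hence $\limsup_{w\to\infty}b^{\leftarrow}(w)/b_0^{\leftarrow}(w)\le M^{-\alpha_0}$, and letting $M\to\infty$ closes the argument. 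Notice this uses only $b/b_0\to\infty$ and $\alpha_0>0$, so it treats uniformly both the case $\alpha_0>\alpha$ (where the conclusion is already forced by comparing the indices $-\alpha_0$ and $-\alpha$) and the borderline $\alpha_0=\alpha$ (where the decay must come entirely from the slowly varying factors).

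The hard part is the bookkeeping around the two scaling functions rather than any single deep step: one must justify the asymptotic inverses $b^{\leftarrow},b_0^{\leftarrow}$ (legitimate since $b,b_0$ are regularly varying of positive index, hence asymptotically increasing to $\infty$), verify that $[1,\infty)^2$ and the marginal half-spaces are $\nu_0$- and $\nu$-continuity sets, and ensure the limit relations persist along the data-dependent sequence $m(p)=\min(u_1(p),u_2(p))$. The reduction to a common threshold in the first paragraph is precisely what lets us dispense with any tail-equivalence assumption, in contrast to the converse implication discussed after \Cref{def:hrv}.
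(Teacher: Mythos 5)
There is a genuine gap, and it sits exactly where you hedge with the phrase ``both positive for nondegenerate margins.'' The lemma does not grant that hypothesis. In Definition~\ref{def:mrv} the limit measure $\nu$ is only required to be non-zero, and the paper stresses right after Definition~\ref{def:hrv} that the marginal tails of $\bZ$ need \emph{not} be equivalent; in particular $\nu$ may concentrate entirely on one axis, in which case one of your constants $\nu_1=\nu([1,\infty)\times[0,\infty))$, $\nu_2=\nu([0,\infty)\times[1,\infty))$ is zero. Then the relation $\ov F_i(m)\sim\nu_i/b^{\leftarrow}(m)$ fails for that margin, and your key display, which puts $\min(\nu_1,\nu_2)$ in a denominator, is meaningless. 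Worse, this is not a repairable bookkeeping issue: your very first reduction (pass to the common threshold $m$ and divide by $\min(\ov F_1(m),\ov F_2(m))$) is already too lossy in this regime, so no later fix can save the argument in the form you set it up.

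Concretely, let $Z_1$ be Pareto with $\ov F_1(x)=x^{-2}$, let $V$ be an independent Pareto with tail $x^{-1}$, and set $Z_2=Z_1+V$. Then $\bZ\in\MRV(1,t,\nu,\Esp)$ with $\nu$ concentrated on the vertical axis (so $\nu_1=0$), and $\bZ\in\HRV(2,\sqrt{t},\nu_0,\Esp_0)$ with $\nu_0((x,\infty)\times(y,\infty))=\max(x,y)^{-2}$; all hypotheses of the lemma hold, and its conclusion is true since $\wh C(p,p)=\P(Z_1>p^{-1/2},Z_2>\ov F_2^{\leftarrow}(p))=O(p^{2})$. But $Z_2>Z_1$ almost surely, so $m(p)=u_1(p)=p^{-1/2}$ and
\begin{equation*}
\frac{\P(Z_1>m,Z_2>m)}{\min\bigl(\ov F_1(m),\ov F_2(m)\bigr)}=\frac{\P(Z_1>m)}{\ov F_1(m)}=1\quad\text{for all small }p,
\end{equation*}
so your upper bound never tends to $0$: the factor you discard when replacing the larger threshold $u_2(p)\sim p^{-1}$ by the common threshold $m=p^{-1/2}$ is precisely what carries the asymptotic tail independence here. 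The paper's proof avoids this by never collapsing to a common threshold: it keeps the marginal quantiles $b_1(t/x)$ and $b_2(t/y)$ separate, uses only $\max(b_1,b_2)/b_0\to\infty$ and $\liminf_{t\to\infty}\min(b_1,b_2)(t)/b_0(t)\ge 1$ to bound the heavier quantile below by $Mb_0$ and the lighter one below by $b_0/2$, applies the HRV convergence, and lets $M\to\infty$. That argument covers exactly the non-equivalent-margin case your proof misses. When both margins are tail equivalent on the scale $b$ (i.e.\ $\min(\nu_1,\nu_2)>0$), your route is correct — including the clean inversion argument showing $b^{\leftarrow}/b_0^{\leftarrow}\to0$ — but that is a strictly stronger hypothesis than the lemma's.
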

\begin{proof}
Let $b_{i}(t) = (1/(1-F_{i}))^{\la}(t)$ where $Z_{i}\sim F_{i}, i=1,2$. Due to the assumptions we have
\begin{eqnarray*}
    \lim_{t\to\infty}\frac{\max(b_1(t),b_2(t))}{b_0(t)}=\infty \quad \mbox{ and }\quad \liminf_{t\to\infty}\frac{\min(b_1(t),b_2(t))}{b_0(t)}\geq 1.
\end{eqnarray*}
Without loss of generality $b_1(t)/b_0(t)\to\infty$. Then for any $M>0$ there exists a $t_0=t_0(M)$ so that $b_1(t)\geq Mb_0(t)$ for any $t\geq t_0$.
Hence, for $x,y>0$
\begin{eqnarray*}
    \lim_{t\to\infty}t\, \P \left(1- F_1(Z_1) \le x/t, 1-F_2(Z_2) \le y/t \right)
        &=&\lim_{t\to\infty}t\, \P \left(Z_1 \geq b_1(t/x), Z_2 \geq b_2(t/y) \right)\\
        &\leq& \lim_{t\to\infty}t\, \P \left(Z_1 \geq Mb_0(t/x), Z_2 \geq 2^{-1}b_0(t/y) \right)\\
        &\leq &C_{x,y}\nu_0(\left[M,\infty\right)\times\left[2^{-1},\infty\right))\stackrel{M\to\infty}{\to}0,
\end{eqnarray*}
so that $\bZ$ is asymptotically tail independent (here $C_{x,y}$ is some fixed constant).
\end{proof}

\begin{Remark}
The assumption  $\bZ \in \MRV({\alpha}, b, \nu, \Esp)\cap\MRV({\alpha_0}, b_0, \nu_0, \Esp_0)$ and $\bZ$ is asymptotic upper tail independent already implies
that $\bZ\in \HRV({\alpha_0}, b_0, \nu_0)$; see \citep{resnick:2002,maulik:resnick:2005}. Consequently $\lim_{t\to\infty}b(t)/b_0(t)=\infty$ as well.
\end{Remark}


\subsection{Assumptions} \label{subsec:assumptions}
In this section we list assumptions on the random variables for which we show consistency of relevant estimators in the paper. Parts of the assumptions are to fix notations for future results.
\begin{assumptionletter} \label{cond:basic} $\mbox{}$ 
\begin{enumerate}
\item[(A1)]  Let $\bZ =(Z_1, Z_2) \in \left[0,\infty\right)^2$  such that $\bZ \in \MRV({\alpha}, b, \nu, \Esp)$  where
$$b(t)=\left(1/\P(\max(Z_1,Z_2)>\,\cdot\,)\right)^{\la}(t)=\ov F_{\max(Z_1,Z_2)}^{\leftarrow}(1/t)\in\RV_{1/\alpha}.$$ 
\item[(A2)] $\E|Z_1|<\infty$.
\item[(A3)] $b_2(t):=\ov F_{Z_2}^{\la}(1/t)$ for $t\geq 0$.
\item[(A4)]  Without loss of generality we assume that the support of $Z_1$ is $\left[1,\infty\right)$.  A constant shift would not affect the tail properties of MME or MES.
\item[(A5)]   $\bZ \in \MRV({\alpha_0}, b_0, \nu_0, \Esp_0)$ with $\alpha_0\ge \alpha\geq 1$, where
 $$b_0(t)=\left(1/\P(\min(Z_1,Z_2)>\,\cdot\,)\right)^{\la}(t)=\ov F_{\min(Z_1,Z_2)}^{\leftarrow}(1/t)\in\RV_{1/{\alpha_0}},$$
 and $b(t)/b_0(t)\to\infty$.
\end{enumerate}
\end{assumptionletter}

\begin{Lemma}
Let $\ov F_{Z_2} \in \RV_{-\beta}$, $\beta >0$. Then Assumption \ref{cond:basic} implies $\alpha\leq\beta\leq\alpha_0$.
\end{Lemma}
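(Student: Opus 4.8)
The plan is to exploit the pointwise ordering $\min(Z_1,Z_2)\le Z_2\le \max(Z_1,Z_2)$, which immediately gives the sandwich of survival functions
$$\ov F_{\min(Z_1,Z_2)}(x)\;\le\;\ov F_{Z_2}(x)\;\le\;\ov F_{\max(Z_1,Z_2)}(x),\qquad x\ge 0,$$
and then to read off the index inequalities from this chain by means of regular variation.

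First I would translate the hypotheses of \Cref{cond:basic}, which are stated in terms of the scaling functions, into statements about the tails. By (A1) we have $b=\ov F_{\max(Z_1,Z_2)}^{\la}(1/\,\cdot\,)\in\RV_{1/\alpha}$; invoking the standard equivalence between a monotone regularly varying function and its generalized inverse (e.g.\ \cite{Resnick:1987}), this is the same as $\ov F_{\max(Z_1,Z_2)}\in\RV_{-\alpha}$. Likewise (A5) yields $\ov F_{\min(Z_1,Z_2)}\in\RV_{-\alpha_0}$. Together with the hypothesis $\ov F_{Z_2}\in\RV_{-\beta}$, all three members of the sandwich are now regularly varying, with indices $-\alpha$, $-\beta$ and $-\alpha_0$ respectively.

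The core step is a one-line comparison lemma: if $g\in\RV_{-a}$, $h\in\RV_{-c}$ and $g(x)\ge h(x)$ for all large $x$, then $a\le c$. Indeed $g/h\in\RV_{c-a}$, so were $c-a<0$ the ratio would tend to $0$, contradicting $g/h\ge 1$. Applying this to $g=\ov F_{\max(Z_1,Z_2)}\ge \ov F_{Z_2}=h$ gives $\alpha\le\beta$, and to $g=\ov F_{Z_2}\ge \ov F_{\min(Z_1,Z_2)}=h$ gives $\beta\le\alpha_0$; chaining the two inequalities yields $\alpha\le\beta\le\alpha_0$.

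I do not anticipate a genuine obstacle here; the only points requiring care are (i) invoking the inverse-function equivalence to pass from $b,b_0\in\RV$ to tail regular variation in the correct direction, and (ii) noting that the comparison argument constrains only the indices and tolerates equal indices (it delivers $\le$, never a strict inequality), which is precisely what the statement asserts. The resulting bound is consistent with the already-assumed $\alpha_0\ge\alpha$.
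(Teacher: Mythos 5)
Your proof is correct and rests on the same core idea as the paper's: the pointwise sandwich $\ov F_{\min(Z_1,Z_2)}\le \ov F_{Z_2}\le \ov F_{\max(Z_1,Z_2)}$ combined with comparison of regular-variation indices. The only cosmetic difference is that the paper establishes $\beta\le\alpha_0$ by evaluating along the scaling function, using $1\sim t\,\P(Z_1>b_0(t),Z_2>b_0(t))\le t\,\P(Z_2>b_0(t))\in\RV_{1-\beta/\alpha_0}$, whereas you first pass from $b_0\in\RV_{1/\alpha_0}$ to $\ov F_{\min(Z_1,Z_2)}\in\RV_{-\alpha_0}$ via the inverse-function equivalence and then apply your ratio lemma directly to the tails.
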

\begin{proof}
First of all, $\beta \ge \alpha$ since otherwise $\bZ \in \MRV({\alpha}, b, \nu, \Esp)$ cannot hold.
Moreover,
\begin{eqnarray} \label{a1}
    1\sim t\,\P(Z_1>b_0(t),Z_2>b_0(t))\leq t\, \P(Z_2>b_0(t))\in\RV_{1-\frac{\beta}{\alpha_0}}.
\end{eqnarray}
Thus, if $\alpha_0<\beta$ then $\lim_{t\to\infty}t \P(Z_2>b_0(t))=0$ which is a contradiction to \eqref{a1}.
\end{proof}
\begin{Remark}
In general, we see from this that under  Assumption~\ref{cond:basic},  $\liminf_{t\to\infty}t \P(Z_2>b_0(t))\geq 1$
and hence, for any $\epsilon>0$ there exist $C_1(\epsilon)>0$, $C_2(\epsilon)>0$  and $x_0(\epsilon)>0$ such that
 $$C_1(\epsilon) x^{-\alpha_0-\epsilon} \leq \P(Z_2>x)\leq C_2(\epsilon) x^{-\alpha+\epsilon}$$
for any $x\geq x_0(\epsilon)$.
\end{Remark}

We need a couple of more conditions, especially on the joint tail behavior of $\bZ = (Z_1,Z_2)$  in order to talk about the limit behavior of $\MME(p)$ and  $\text{MES}(p)$ as $p\downarrow 0$. We impose the following assumptions on the distribution of $\bZ$. Assumption (B1) is imposed to find the limit of MME  in \eqref{def:MME} whereas both (B1) and (B2) (which are clubbed together as  Assumption \ref{cond:dct1}) are imposed to find the limit in \eqref{def:MES}, of course, both under appropriate scaling.

\begin{assumptionletter} \label{cond:dct1} $\mbox{}$
\begin{enumerate}[(B1)]
    \item \label{B1}  ${\displaystyle \lim_{M\to\infty}\lim_{t\to\infty}\int_{M}^\infty \frac{\P(Z_1>xt, Z_2>t)}{\P(Z_1>t, Z_2>t)}\, \mathrm dx=0}$.
    \item \label{B2} ${\displaystyle \lim_{M\to\infty}\lim_{t\to\infty}\int_0^{1/M} \frac{\P(Z_1>xt, Z_2>t)}{\P(Z_1>t, Z_2>t)}\, \mathrm dx=0}$.
\end{enumerate}
\end{assumptionletter}
Assumption (B1) and Assumption (B2) deal with tail integrability near infinity and near zero for a specific integrand, respectively that comes up in calculating limits of
MME and MES.  The following lemma  trivially provides a
{sufficient} condition for (B1).

\begin{Lemma}
If there exists an integrable function $g:\left[0,\infty\right)\to \left[0,\infty\right)$
    with $$\sup_{t\geq t_0}\frac{\P(Z_1>y, Z_2>t)}{t\,\P(Z_1>t, Z_2>t)}\leq g(y)$$ for $y>0$ and some $t_0>0$
then (B1) is satisfied.
\end{Lemma}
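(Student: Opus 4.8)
The plan is to use the hypothesis as a uniform-in-$t$ integrable dominating bound and then reduce the integral in (B1) to a tail integral of $g$ via a change of variables. First I would fix $M>0$ and restrict to $t\geq t_0$, so that the assumed bound is available. For $x\geq M$ set $y=xt$; then multiplying both sides of the hypothesis by $t$ gives the pointwise bound on the integrand
\[
\frac{\P(Z_1>xt, Z_2>t)}{\P(Z_1>t, Z_2>t)} = \frac{\P(Z_1>y, Z_2>t)}{\P(Z_1>t, Z_2>t)} \le t\,g(xt),
\]
valid for every $t \ge t_0$ and every $x>0$.

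Next I would integrate this bound over $x\in[M,\infty)$ and perform the substitution $u=xt$, $\mathrm du = t\,\mathrm dx$, which absorbs the scaling factor $t$ into the Jacobian and sends the lower endpoint $x=M$ to $u=Mt$:
\[
\int_M^\infty \frac{\P(Z_1>xt, Z_2>t)}{\P(Z_1>t, Z_2>t)}\,\mathrm dx \le \int_M^\infty t\,g(xt)\,\mathrm dx = \int_{Mt}^\infty g(u)\,\mathrm du.
\]
Since $g$ is integrable on $[0,\infty)$, the tail $\int_s^\infty g(u)\,\mathrm du \to 0$ as $s\to\infty$; in particular, for each fixed $M>0$, letting $t\to\infty$ forces $Mt\to\infty$, so the right-hand side vanishes. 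Hence the inner limit $\lim_{t\to\infty}$ of the left-hand side is $0$ for every $M$, and the outer $\lim_{M\to\infty}$ is then immediate, which is exactly (B1).

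This is essentially a change-of-variables computation, so I do not expect a genuine obstacle. The only points requiring a little care are: (i) the substitution is legitimate because the integrand is nonnegative and measurable, so Tonelli applies without any integrability concern beyond the one already granted for $g$; (ii) the bound is used only for $t\geq t_0$, which is harmless since we send $t\to\infty$; and (iii) the iterated limit in (B1) must be respected, but the inner $t$-limit already yields $0$ for each $M$, so the outer $M$-limit is trivial. The real content is simply the observation that the factor $t$ in the denominator of the hypothesis is precisely the Jacobian produced by the scaling $y=xt$, which is what lets the integrable majorant $g$ control the rescaled integral near infinity.
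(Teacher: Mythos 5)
Your proof is correct and is precisely the intended argument: the paper states this lemma without any proof (introducing it as a "trivially" sufficient condition for (B1)), and the observation that the factor $t$ in the hypothesis is the Jacobian of the substitution $u=xt$, reducing the integral to the tail $\int_{Mt}^{\infty} g(u)\,\mathrm du$ of an integrable function, is exactly what makes it trivial. Your side remarks (nonnegativity/measurability for the substitution, and that the inner $t$-limit already vanishes for every fixed $M>0$, making the outer $M$-limit immediate) are accurate and complete the argument.
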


\begin{Lemma} \label{Lemma:forAssB}
Let Assumption \ref{cond:basic} hold.
\begin{enumerate}[(a)]
    \item Then (B2) implies
    \begin{eqnarray} \label{B2*}
         \lim_{t\to\infty}\frac{\P(Z_2>t)}{t\,\P(Z_1>t, Z_2>t)}=0.
    \end{eqnarray}
    \item Suppose $\ov F_{Z_2}\in\RV_{-\beta}$ with $\alpha\leq\beta\leq\alpha_0$. Then
    $\alpha_0\leq \beta+ 1$ is a necessary and $\alpha_0< \beta+ 1$ is a sufficient condition for \eqref{B2*} to hold.
    Hence, $\alpha_0\leq \beta+ 1$ is a  necessary condition for Assumption  (B2) as well. \\
\end{enumerate}
\end{Lemma}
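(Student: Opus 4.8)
The plan is to treat the two parts separately: part (a) rests on the support normalisation (A4), while part (b) is a bookkeeping of regular variation indices followed by a case analysis on their sign.

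For part (a), I would bound the integrand in (B2) from below on a shrinking interval near the origin where the $Z_1$-constraint is inactive. Since by (A4) the support of $Z_1$ is $[1,\infty)$, whenever $xt<1$ we have $Z_1\ge 1>xt$ almost surely, so $\P(Z_1>xt,Z_2>t)=\P(Z_2>t)$ for every $x<1/t$. Fixing $M$ and taking $t\ge M$ so that $(0,1/t)\subseteq(0,1/M)$, and using nonnegativity of the integrand on the complementary interval, I would obtain
$$\int_0^{1/M}\frac{\P(Z_1>xt,Z_2>t)}{\P(Z_1>t,Z_2>t)}\,\mathrm dx \;\ge\; \int_0^{1/t}\frac{\P(Z_2>t)}{\P(Z_1>t,Z_2>t)}\,\mathrm dx \;=\; \frac{\P(Z_2>t)}{t\,\P(Z_1>t,Z_2>t)}.$$
Taking $\limsup_{t\to\infty}$ and then $M\to\infty$, the left-hand side vanishes by (B2), while the right-hand side does not depend on $M$; as the ratio is nonnegative this forces \eqref{B2*}. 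This part is essentially immediate once one isolates the subinterval on which the $Z_1$-constraint drops out.

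For part (b), the strategy is to read off the index of regular variation of $D(t):=\P(Z_2>t)/(t\,\P(Z_1>t,Z_2>t))$. Here $\P(Z_1>t,Z_2>t)=\P(\min(Z_1,Z_2)>t)$, and by (A5) the scaling function $b_0\in\RV_{1/\alpha_0}$ is the generalised inverse of $1/\P(\min(Z_1,Z_2)>\cdot)$; inverting regularly varying functions then gives $\P(\min(Z_1,Z_2)>\cdot)\in\RV_{-\alpha_0}$. Combining this with the hypothesis $\ov F_{Z_2}\in\RV_{-\beta}$ and $t\in\RV_{1}$, the product and quotient rules for regularly varying functions yield $D\in\RV_{\alpha_0-\beta-1}$.

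The conclusion then follows from the sign of $\alpha_0-\beta-1$. If $\alpha_0<\beta+1$ the index is negative, so $D(t)\to 0$ and \eqref{B2*} holds, giving sufficiency; if $\alpha_0>\beta+1$ the index is positive and $D(t)\to\infty$, contradicting \eqref{B2*}, so $\alpha_0\le\beta+1$ is necessary, and via part (a) the same is then necessary for (B2). The main (and only genuine) obstacle is the boundary case $\alpha_0=\beta+1$, where $D$ is merely slowly varying and may tend to $0$, to a positive constant, or to $\infty$; this is precisely why the statement asserts $\alpha_0<\beta+1$ only as \emph{sufficient} while $\alpha_0\le\beta+1$ is \emph{necessary}, and I would not try to close this gap.
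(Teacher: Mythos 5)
Your proposal is correct and follows essentially the same route as the paper: part (a) uses the identical observation that (A4) makes the $Z_1$-constraint vacuous on $(0,1/t)$, so the ratio in \eqref{B2*} equals the integral over $(0,1/t)$, which is dominated by the integral over $(0,1/M)$ appearing in (B2); part (b) is the same computation showing the ratio lies in $\RV_{\alpha_0-\beta-1}$, with the sign of the index deciding sufficiency and necessity. Your explicit treatment of the boundary case $\alpha_0=\beta+1$ and of why $\P(\min(Z_1,Z_2)>\cdot)\in\RV_{-\alpha_0}$ merely spells out details the paper leaves implicit.
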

\begin{proof} $\mbox{}$\\
(a) \, Since the support of $Z_1$ is $\left[1,\infty\right)$ we get for large $t \ge M$, by (B2),
\begin{eqnarray*}
 \frac{\P(Z_2>t)}{t\P(Z_1>t, Z_2>t)}=\int_{0}^{1/t}\frac{\P(Z_1>xt, Z_2>t)}{\P(Z_1>t, Z_2>t)}\, \mathrm dx\leq
 \int_{0}^{1/M}\frac{\P(Z_1>xt, Z_2>t)}{\P(Z_1>t, Z_2>t)}\, \mathrm dx\stackrel{t,M\to\infty}{\to}0.
\end{eqnarray*}
But the left hand side is independent of $M$ so that the claim follows.\\
(b) In this case $$\frac{\P(Z_2>t)}{t\,\P(Z_1>t, Z_2>t)}\in\RV_{-\beta-1+\alpha_0}$$ from which the statement follows.
\end{proof}
\begin{Remark}
If $Z_1, Z_2$ are independent  then under the assumptions of {Lemma \ref{Lemma:forAssB}(b)}, $\alpha_0=\alpha+\beta$. Moreover if
 $1<\alpha\leq\beta$ then clearly $\alpha_0=\alpha+\beta>1+\beta$ and $\alpha_{0} \le 1+\beta$ cannot hold.
Hence, Assumption (B2) is not valid if $Z_1$ and $Z_2$ are independent.
In other words, Assumption (B2) signifies that although $Z_1,Z_2$ are asymptotically upper tail independent,
there is an underlying dependence between $Z_1$ and $Z_2$ which is absent in the independent case.
\end{Remark}


\section{Asymptotic behavior of the MME and the MES} \label{sec:asymptotic}

\subsection{Asymptotic behavior of the MME} \label{subsec:asymptotic:MME}

For asymptotically independent risks, from  \eqref{i:2} and \cref{Remark:3} we have that
 \begin{eqnarray*}
    \lim_{p\to 0}\frac{1}{\VaR_{1-p}(Z_1)}\MME(p)=0,
\end{eqnarray*}
which doesn't provide us much in the way of identifying the rate  of increase (or decrease) of $\MME(p)$.
The aim of this section is to get a version of \eqref{i:2} for the asymptotically tail independent case
 which is presented in the next theorem.

\begin{Theorem}\label{thm:MME_AI}
Suppose $\bZ=(Z_1,Z_2) \in [0,\infty)^{2}$  satisfies Assumption \ref{cond:basic}  and (B1). Then
\begin{align}\label{conv:main:2}
\lim\limits_{p\downarrow 0} \frac{pb_0^{\la}(b_2(1/p))}{b_2(1/p)} \MME(p)  =  \lim\limits_{p\downarrow 0} \frac{pb_0^{\la}(\VaR_{1-p}(Z_2))}{\VaR_{1-p}(Z_2)} \MME(p)=\int_1^{\infty} \nu_0((x,\infty)\times(1,\infty))\;\mathrm dx.
\end{align}
Moreover, $0<\int_1^{\infty} \nu_0((x,\infty)\times(1,\infty))\;\mathrm dx<\infty$.
\end{Theorem}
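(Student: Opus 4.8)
The plan is to reduce $\MME(p)$ to a ratio of joint tail probabilities and then identify the limit using the hidden regular variation on $\Esp_0$. First I would translate the conditioning level. Since $\VaR_{1-p}(Z_2) = \ov F_{Z_2}^{\leftarrow}(p) = b_2(1/p) =: t$, the two expressions in \eqref{conv:main:2} are literally the same, and (assuming $F_2$ continuous, which is w.l.o.g.\ for the asymptotics) $\P(Z_2 > t) = p$. Writing the positive part as a tail integral and substituting $s = t(x-1)$,
\begin{align*}
\E\bigl[(Z_1 - t)_+\1_{\{Z_2 > t\}}\bigr] = \int_0^\infty \P(Z_1 > t + s, Z_2 > t)\,\mathrm ds = t\int_1^\infty \P(Z_1 > tx, Z_2 > t)\,\mathrm dx,
\end{align*}
so that $\MME(p) = (t/p)\int_1^\infty \P(Z_1 > tx, Z_2 > t)\,\mathrm dx$. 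Multiplying by the scaling factor, the $p$ and $t$ cancel and I am left with $\frac{p b_0^{\leftarrow}(t)}{t}\MME(p) = b_0^{\leftarrow}(t)\int_1^\infty \P(Z_1 > tx, Z_2 > t)\,\mathrm dx$.

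Next I would use the definition of $b_0$ to turn this into a ratio. Since $b_0 = (1/\ov F_{\min(Z_1,Z_2)})^{\leftarrow}$ with $1/\ov F_{\min(Z_1,Z_2)} \in \RV_{\alpha_0}$, standard inversion of regularly varying functions gives $b_0^{\leftarrow}(t) \sim 1/\P(Z_1 > t, Z_2 > t)$, whence
\begin{align*}
\frac{p b_0^{\leftarrow}(t)}{t}\MME(p) = \bigl(b_0^{\leftarrow}(t)\,\P(Z_1 > t, Z_2 > t)\bigr)\int_1^\infty \frac{\P(Z_1 > tx, Z_2 > t)}{\P(Z_1 > t, Z_2 > t)}\,\mathrm dx,
\end{align*}
with the prefactor tending to $1$. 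From the HRV convergence $s\,\P(\bZ/b_0(s) \in\cdot)\to\nu_0$, evaluated on the $\nu_0$-continuity set $(x,\infty)\times(1,\infty)$ and with $t = b_0(s)$, I obtain for a.e.\ $x>1$ the pointwise limit
\begin{align*}
\frac{\P(Z_1 > tx, Z_2 > t)}{\P(Z_1 > t, Z_2 > t)} \xrightarrow{t\to\infty} \frac{\nu_0((x,\infty)\times(1,\infty))}{\nu_0((1,\infty)\times(1,\infty))} = \nu_0((x,\infty)\times(1,\infty)),
\end{align*}
the normalization $\nu_0((1,\infty)\times(1,\infty)) = 1$ being forced by the choice of $b_0$ through $\ov F_{\min(Z_1,Z_2)}$.

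The crux is interchanging limit and integral over the unbounded range $[1,\infty)$. I would split $\int_1^\infty = \int_1^M + \int_M^\infty$. On $[1,M]$ the integrand is bounded by $1$ (because $\P(Z_1 > tx, Z_2 > t) \le \P(Z_1 > t, Z_2 > t)$ for $x \ge 1$), so bounded convergence yields $\int_1^M \nu_0((x,\infty)\times(1,\infty))\,\mathrm dx$; the tail $\int_M^\infty$ is controlled precisely by Assumption (B1), which gives $\lim_{M\to\infty}\limsup_{t\to\infty}\int_M^\infty(\cdots)\,\mathrm dx = 0$. This is the step I expect to be the main obstacle, as it is exactly where the tail-integrability hypothesis (B1) is indispensable; the rest is bookkeeping with regular variation. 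Letting $t\to\infty$ and then $M\to\infty$ establishes \eqref{conv:main:2}.

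Finally, finiteness of the limit follows by applying Fatou's lemma together with (B1): since $\int_M^\infty \nu_0((x,\infty)\times(1,\infty))\,\mathrm dx \le \liminf_{t\to\infty}\int_M^\infty(\cdots)\,\mathrm dx \to 0$ as $M\to\infty$, the integral $\int_1^\infty \nu_0((x,\infty)\times(1,\infty))\,\mathrm dx$ must be finite. Strict positivity is immediate, because $\nu_0((x,\infty)\times(1,\infty))$ is nonincreasing in $x$ and positive for $x$ near $1$ (indeed $\nu_0((1,\infty)\times(1,\infty)) = 1$), so the integrand is bounded below by a positive constant on a neighbourhood of $x=1$.
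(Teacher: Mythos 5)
Your proposal is correct and follows essentially the same route as the paper's proof: the same reduction of $\MME(p)$ to $\int_1^{\infty} \P(Z_1>tx, Z_2>t)/\P(Z_1>t, Z_2>t)\,\mathrm dx$ with $t=b_2(1/p)$, the same bounded-convergence argument on $[1,M]$ (integrand bounded by $1$), and the same use of Assumption (B1) together with Fatou's Lemma to control the tail and obtain finiteness. The only immaterial difference is the positivity step, where the paper bounds the integrand below by $\nu_0((x,\infty)\times(x,\infty))=x^{-\alpha_0}$ via homogeneity, while you argue via monotonicity and continuity from below of $\nu_0$ near $x=1$; both work.
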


\begin{proof}
We know that for a non-negative random variable $W$, we have $\E W = \int_0^{\infty} \P(W>x)\, \mathrm dx.$ Let $t=b_2(1/p)$. Also note that $b_0^{\la}(t)=1/\P(\min(Z_1,Z_2)>t) =1/\P(Z_1>t,Z_2>t)$. Then
\begin{align}
\frac{pb_0^{\la}(b_2(1/p))}{b_2(1/p)} \MME(p)  & = \frac{\overline{F}_{Z_2}(t)b_0^{\la}(t)}{t} \;\E((Z_1-t)_+|Z_2>t)\nonumber \\
  								& = \frac{\P(Z_2>t)b_0^{\la}(t)}{t} \int_t^{\infty}\frac{\P(Z_1>x, Z_2>t)}{\P(Z_2>t)} \;\mathrm dx\nonumber \\
 								& =  \int_t^{\infty} \frac{\P(Z_1>x, Z_2>t)}{t\P(Z_1>t, Z_2>t)} \;\mathrm dx \nonumber \\
								& =  \int_1^{\infty} \frac{\P(Z_1>tx, Z_2>t)}{\P(Z_1>t, Z_2>t)} \;\mathrm dx	 =: \int_{1}^{\infty} \nu_{t}(x)\; \mathrm dx.	\label{def:nut}				
							\end{align}							
Observe that for $x\ge1$, by Assumption (A5),
\begin{align*}
  \nu_{t}(x) =\frac{\P(Z_1>tx, Z_2>t)}{\P(Z_1>t, Z_2>t)}  = b_0^{\la}(t) \P\left(\frac{\bZ}{t} \in (x,\infty)\times (1,\infty)\right) \cont \nu_0((x,\infty)\times (1,\infty)). 
  \end{align*}
We also have
\begin{align*}
 \nu_{t}(x) = \frac{\P(Z_{1}>tx,Z_{2}>t)}{\P(Z_{1}>t,Z_{2}>t)} \le 1, \quad x\ge1.
 \end{align*}
 Now, for $x\ge 1$, we have $ \nu_0((x,\infty)\times (1,\infty))\le  \nu_0((1,\infty)\times (1,\infty))= \lim_{t\to \infty} \nu_{t}(1) =1$. Hence, for any $M\ge 1$ we have $\int_{1}^M \nu_0((x,\infty)\times(1,\infty))\;\mathrm dx\leq M$. Therefore  using Lebesgue's Dominated Convergence Theorem,
\begin{align}\label{DCT}
 \lim_{t\to\infty}  \int_{1}^{M} \nu_{t}(x)\; \mathrm dx  & =  \int_{1}^{M} \nu_{0}((x,\infty)\times(1,\infty))\; \mathrm dx.
\end{align}
Next we  check that $0<\int_{1}^{\infty}\nu_0((x,\infty)\times (1,\infty)) \;\mathrm dx <\infty.$ 
Define for $M\ge1$,
\[ \psi_{M} := \lim_{t\to\infty}\int_{M}^{\infty} \nu_{t}(x) \mathrm dx.\]
By Assumption (B1), we have
 \begin{align}\label{psiM}\lim_{M\to\infty} \psi_{M} =0.\end{align} 
 Hence, there exists $M_{0}>0$ such that $|\psi_{M}|\le 1$ for all $M>M_{0}$.  Applying Fatou's Lemma, we know that for any $M>M_{0}$,
\begin{align*}
\int_{M}^{\infty}\nu_0((x,\infty)\times (1,\infty)) \;\mathrm dx & \le \liminf_{t\to \infty} \int_{M}^{\infty} \nu_{t}(x) \mathrm dx \le \psi_{M} \le 1.\end{align*}
Therefore, for fixed $M>M_{0}$,
\begin{align*}
\int_{1}^{\infty}\nu_0((x,\infty)\times (1,\infty)) \;\mathrm dx & = \int_1^M \nu_0((x,\infty)\times (1,\infty)) \,\mathrm d x + \int_M^{\infty} \nu_0((x,\infty)\times (1,\infty)) \,\mathrm d x  \le M +1 <\infty. 
\end{align*}
Moreover,
$\nu_0((x,\infty)\times(x,\infty))$ is homogeneous of order $-\alpha_0$ so that
\begin{eqnarray*}
    \int_{1}^\infty \nu_0((x,\infty)\times(1,\infty))\;\mathrm dx
        \geq   \int_{1}^\infty \nu_0((x,\infty)\times(x,\infty))\;\mathrm dx= \nu_0((1,\infty)\times(1,\infty))\int_{1}^\infty x^{-\alpha_0}\, \mathrm dx>0.
\end{eqnarray*}
Hence $0<\int_{1}^{\infty}  \nu_0((x,\infty)\times(1,\infty))\;\mathrm dx < \infty$.  Therefore, since $t=b_{2}(1/p)\uparrow \infty$ as $p\downarrow0$, we have

\begin{align*}
\lim_{p\downarrow0}\frac{pb_0^{\la}(b_2(1/p))}{b_2(1/p)} \MME(p)  & = \lim_{t\to\infty}  \int_{1}^{\infty} \nu_{t}(x)\; \mathrm dx\\
												      & = \lim_{t\to\infty}  \left[\int_{1}^{M} \nu_{t}(x)\; \mathrm dx +  \int_{M}^{\infty} \nu_{t}(x)\; \mathrm dx \right]\\
												      & = \lim_{M\to \infty} \left[\lim_{t\to\infty} \int_{1}^{M} \nu_{t}(x)\; \mathrm dx +  \lim_{t\to\infty}\int_{M}^{\infty} \nu_{t}(x)\; \mathrm dx \right] \quad \text{(since it is true for any $M\ge 1$)}\\
												      & =  \lim_{M\to \infty} \int_{1}^{M} \nu_{0}((x,\infty)\times(1,\infty)) \;\mathrm dx+ \lim_{M\to \infty} \psi_{M} \quad\quad(\text{using }\eqref{DCT}) \\
                                                                                                          &  =  \int_{1}^{\infty} \nu_{0}((x,\infty)\times(1,\infty))\; \mathrm dx \quad\quad(\text{using } \eqref{psiM}).
\end{align*}                                                                                                           

\end{proof}

\begin{Corollary} \label{Corollary 3.3}
Suppose $\bZ=(Z_1,Z_2)$  satisfies Assumptions \ref{cond:basic}, (B1) and $\overline F_{Z_2}\in\RV_{-\beta}$ for
some $\alpha\leq\beta\leq\alpha_0$. Then $\MME(1/t)\in \RV_{(1+\beta-\alpha_0)/\beta}$.
For $1+\beta>\alpha_0$ we have $\lim_{p\to 0}\MME(p)=\infty$ with $$\frac{1+\beta-\alpha_0}{\beta}\in\left[1-\frac{\alpha_0-1}{\beta},\frac{1}{\alpha_0}\right]\subseteq \left(0,1\right]$$ and for
$1+\beta<\alpha_0$ we have $\lim_{p\to 0}\MME(p)=0$.
\end{Corollary}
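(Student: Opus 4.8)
The plan is to read off the asymptotic order of $\MME(p)$ directly from \Cref{thm:MME_AI} and then bookkeep the relevant regular variation indices. First I would invoke \Cref{thm:MME_AI}: writing $K := \int_1^\infty \nu_0((x,\infty)\times(1,\infty))\,\mathrm dx$, the theorem guarantees $K\in(0,\infty)$ and
\[
\MME(p) \sim K\,\frac{b_2(1/p)}{p\,b_0^{\la}(b_2(1/p))}, \qquad p \downarrow 0.
\]
Substituting $p = 1/t$ (so that $p\downarrow 0$ corresponds to $t\to\infty$) gives
\[
\MME(1/t) \sim K\,\frac{t\,b_2(t)}{b_0^{\la}(b_2(t))}, \qquad t\to\infty.
\]
Since $K$ is a strictly positive finite constant, $\MME(1/t)$ inherits the regular variation of the right-hand side, so it suffices to compute the index of $t\mapsto t\,b_2(t)/b_0^{\la}(b_2(t))$.

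Next I would track the indices factor by factor. Because $\ov F_{Z_2}\in\RV_{-\beta}$, the function $1/\ov F_{Z_2}\in\RV_{\beta}$ is increasing to infinity, so its generalized inverse $b_2=(1/\ov F_{Z_2})^{\la}\in\RV_{1/\beta}$. Similarly, $b_0\in\RV_{1/\alpha_0}$ increases to infinity by Assumption (A5), whence $b_0^{\la}\in\RV_{\alpha_0}$. Composing two regularly varying functions that diverge to infinity multiplies their indices, so $b_0^{\la}\circ b_2\in\RV_{\alpha_0/\beta}$, and therefore
\[
t\mapsto \frac{t\,b_2(t)}{b_0^{\la}(b_2(t))}\in\RV_{1+\frac1\beta-\frac{\alpha_0}{\beta}}=\RV_{(1+\beta-\alpha_0)/\beta}.
\]
This yields $\MME(1/t)\in\RV_{(1+\beta-\alpha_0)/\beta}$, the first claim. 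The only real care needed is invoking the standard inversion and composition theorems for regular variation; these apply precisely because every function in sight is monotone and tends to infinity, and this is the step where I would be most careful (the rest being the asymptotic equivalence, which preserves the index up to the positive constant $K$).

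Finally I would read off the limiting behaviour and verify the bounds. Set $\rho := (1+\beta-\alpha_0)/\beta = 1-(\alpha_0-1)/\beta$. A function in $\RV_\rho$ tends to $\infty$ when $\rho>0$ and to $0$ when $\rho<0$. Hence if $1+\beta>\alpha_0$, i.e. $\beta>\alpha_0-1$, then $\rho>0$ and $\lim_{p\to0}\MME(p)=\lim_{t\to\infty}\MME(1/t)=\infty$, while if $1+\beta<\alpha_0$ then $\rho<0$ and the limit is $0$. For the displayed interval, note that $\rho$ equals its left endpoint $1-(\alpha_0-1)/\beta$ by definition, while the upper bound $\rho\le 1/\alpha_0$ is equivalent to $(\alpha_0-1)/\alpha_0\le(\alpha_0-1)/\beta$, i.e. to $\beta\le\alpha_0$ (trivial when $\alpha_0=1$, and obtained on dividing by $\alpha_0-1>0$ otherwise), which holds by Assumption (A5). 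The containment $\subseteq(0,1]$ follows since the left endpoint is positive exactly when $\beta>\alpha_0-1$ (the standing case hypothesis) and the right endpoint satisfies $1/\alpha_0\le1$ because $\alpha_0\ge\alpha\ge1$.
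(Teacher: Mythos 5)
Your proof is correct and follows exactly the route the paper intends: the paper states \Cref{Corollary 3.3} without proof as an immediate consequence of \Cref{thm:MME_AI}, and your argument (asymptotic equivalence $\MME(1/t)\sim K\,t\,b_2(t)/b_0^{\la}(b_2(t))$ plus standard inversion/composition rules giving the index $(1+\beta-\alpha_0)/\beta$, matching the function $1/a(t)$ the paper later uses in \Cref{Remark:2}) is precisely the intended bookkeeping. Your verification of the interval bounds, including noting that the left endpoint is identically $\rho$ and that $\rho\le 1/\alpha_0$ reduces to $\beta\le\alpha_0$, is also correct.
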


\begin{Remark}  \label{Remark 3.3}
A few consequences of   \Cref{Corollary 3.3} are illustrated below.
\begin{itemize}
    \item[(a)] When $1+\beta>\alpha_0$, although the quantity $\MME(p)$ increases as $p\downarrow 0$, the rate of increase is slower than a linear function.
    \item[(b)]
        Let $\bZ\in \MRV(\alpha, b,\nu, \Esp)$.
        Suppose $Z_1$ and $Z_2$ are independent and $\ov F_{Z_1}\in\RV_{-\alpha}$ then by Karamata's Theorem,
        \begin{eqnarray*}
            \MME(p)\sim \frac{1}{\alpha-1}\VaR_{1-p}(Z_2)\P(Z_1>\VaR_{1-p}(Z_2)) \quad (p\downarrow 0).
        \end{eqnarray*}
        This is a special case of \cref{thm:MME_AI}.
\end{itemize}
\end{Remark}

\begin{Example} $\mbox{}$ \label{Example 4.3}
In this example we illustrate the influence of the tail behavior of the marginals as well as the dependence
structure  on the asymptotic behavior of the $\MME$. Assume that $\bZ=(Z_1,Z_2) \in [0,\infty)^{2}$
satisfies Assumptions (A1)-(A4). We compare the following tail independent and tail dependent models:
\begin{itemize}
    \item[(D)] Tail dependent model: Additionally $\bZ$ is tail dependent implying  $R\neq 0$ and satisfies \eqref{i:2}.
        We denote its Marginal Mean Excess  by   $\MME^D$. 
        \item[(ID)] Tail independent model: Additionally $\bZ$ is \emph{asymptotically} tail independent satisfying (A5), (B1) and $1+\beta>\alpha_0>\alpha$.
        Its Marginal Mean Excess we denote by
        $\MME^I$.
\end{itemize}
\begin{enumerate}[(a)]
\item Suppose $Z_1,Z_2$ are identically distributed. Since $t/b_0^{\la}(b_2(t))\in\RV_{1-\alpha_0/\alpha}$ and
$1-\alpha_0/\alpha < 0$ we get
\begin{eqnarray*}
    \frac{\MME^{I}(p)}{\MME^D(p)} \underset{p\to0}{\sim}
    \frac{C}{pb_0^{\la}(b_2(1/p))} \underset{p\to0}{\to}  0.
\end{eqnarray*}
 This means in the asymptotically tail
independent case the Marginal Mean Excess increases at a slower rate to infinity, than in the asymptotically tail
dependent case, as  expected.
\item Suppose $Z_1, Z_2$ are not identically distributed and for some finite constant $C>0$
\begin{eqnarray*}
    \P(Z_2>t)\sim C\P(Z_1>t,Z_2>t)\quad (t\to\infty).
\end{eqnarray*}
This means that not only $\bZ \in \MRV({\alpha_0}, b_0, \nu_0, \Esp_0)$ but also $Z_2\in \RV_{{-\alpha_{0}}}$ and
$Z_1$ is heavier tailed than $Z_2$. Then
\begin{eqnarray*}
    \lim_{t\to\infty}\frac{b_0^{\la}(b_2(t))}{t}=\lim_{t\to\infty}\frac{1}{t\P(Z_1>b_2(t),Z_2>b_2(t))}
    =\lim_{t\to\infty}\frac{C}{t\P(Z_2>b_2(t))}=C.
\end{eqnarray*}
Thus,
\begin{align*}
    \lim_{p\to 0}\frac{1}{\VaR_{1-p}(Z_2)} \MME^{I}(p)=\lim_{p\to 0}\frac{ \MME^{I}(p)}{b_2(1/p)}= C\int_0^{\infty} \nu_0((x,\infty)\times(1,\infty)) \;\mathrm dx
\end{align*}
and $\MME^{I}(\cdot)$ is regularly varying of index $-\frac{1}{\alpha_0}$ at $0$.
In this example $Z_2$ is lighter tailed than $Z_1$, and hence, once again we find that in the asymptotically tail independent case
the Marginal Mean Excess $\MME^I$ increases at a slower rate  to infinity than the Marginal Mean Excess $\MME^{D}$  in the asymptotically tail dependent case.
\end{enumerate}
\end{Example}

\subsection{Asymptotic behavior of the MES} \label{subsec:asymptotic:MES}

Here we derive analogous results for the Marginal Expected Shortfall.

\begin{Theorem}\label{thm:MES}
Suppose $\bZ=(Z_1,Z_2)$  satisfies Assumptions \ref{cond:basic}  and
\ref{cond:dct1}. Then
\begin{align}\label{conv:main}
\lim\limits_{p\downarrow 0} \frac{pb_0^{\la}(\VaR_{1-p}(Z_2))}{\VaR_{1-p}(Z_2)} \MES(p)=\lim\limits_{p\downarrow 0} \frac{pb_0^{\la}(b_2(1/p))}{b_2(1/p)} \MES(p)  = \int_0^{\infty} \nu_0((x,\infty)\times(1,\infty))\;\mathrm dx.
\end{align}
Moreover, $0<\int_0^{\infty}\nu_0((x,\infty)\times(1,\infty))\;\mathrm dx<\infty$.
\end{Theorem}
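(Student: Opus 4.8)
The plan is to mirror the proof of \cref{thm:MME_AI} almost verbatim, the only genuinely new feature being the contribution of the region where $Z_1$ is small, which is governed by Assumption (B2). First I would reduce the scaled MES to an integral of the same integrand that appeared in \eqref{def:nut}. Writing $t=b_2(1/p)=\VaR_{1-p}(Z_2)$ and using $\E Z_1=\int_0^{\infty}\P(Z_1>x)\,\mathrm dx$ together with $b_0^{\la}(t)=1/\P(Z_1>t,Z_2>t)$, the same computation as in \cref{thm:MME_AI} gives
\begin{align*}
\frac{pb_0^{\la}(b_2(1/p))}{b_2(1/p)}\MES(p)=\int_0^{\infty}\frac{\P(Z_1>tx,Z_2>t)}{\P(Z_1>t,Z_2>t)}\,\mathrm dx=\int_0^{\infty}\nu_t(x)\,\mathrm dx,
\end{align*}
where now $\nu_t(x):=\P(Z_1>tx,Z_2>t)/\P(Z_1>t,Z_2>t)$ is understood for every $x>0$, not merely for $x\ge1$. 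The sole difference from the MME case is thus the extra mass $\int_0^1\nu_t(x)\,\mathrm dx$; indeed $\int_1^{\infty}\nu_t(x)\,\mathrm dx$ converges to $\int_1^{\infty}\nu_0((x,\infty)\times(1,\infty))\,\mathrm dx$ exactly as established in \cref{thm:MME_AI} using (B1), and the quantity $\psi_M=\lim_{t\to\infty}\int_M^{\infty}\nu_t(x)\,\mathrm dx$ with $\lim_{M\to\infty}\psi_M=0$ is available from that proof.

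Next I would treat $\int_0^1\nu_t(x)\,\mathrm dx$. Pointwise, (A5) still yields $\nu_t(x)\to\nu_0((x,\infty)\times(1,\infty))$ for each fixed $x\in(0,1)$. The obstruction is that this integrand is no longer bounded by $1$ near the origin: as $x\downarrow0$ we have $\nu_t(x)\uparrow\P(Z_2>t)/\P(Z_1>t,Z_2>t)\ge1$, and the pointwise limit $\nu_0((x,\infty)\times(1,\infty))$ need not be integrable at $0$. Hence a single dominating function is unavailable on all of $(0,1]$, and this is precisely where (B2) enters. I would split at a cutoff $1/M$. On $[1/M,1]$ the map $x\mapsto\nu_t(x)$ is nonincreasing, so it is dominated there by $\nu_t(1/M)\to\nu_0((1/M,\infty)\times(1,\infty))<\infty$; this furnishes a dominating constant uniform in $t$ for $t$ large, and by dominated convergence $\int_{1/M}^1\nu_t(x)\,\mathrm dx\to\int_{1/M}^1\nu_0((x,\infty)\times(1,\infty))\,\mathrm dx$. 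On $(0,1/M)$ I would set $\phi_M:=\lim_{t\to\infty}\int_0^{1/M}\nu_t(x)\,\mathrm dx$, which satisfies $\lim_{M\to\infty}\phi_M=0$ directly by (B2), in complete analogy with $\psi_M$.

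For finiteness of the limiting integral, Fatou's lemma on $(0,1/M)$ gives $\int_0^{1/M}\nu_0((x,\infty)\times(1,\infty))\,\mathrm dx\le\phi_M$, which is at most $1$ once $M$ is large; adding the finite piece $\int_{1/M}^1\nu_0((x,\infty)\times(1,\infty))\,\mathrm dx$ together with the bound $\int_1^{\infty}\nu_0((x,\infty)\times(1,\infty))\,\mathrm dx<\infty$ already proved in \cref{thm:MME_AI} yields $\int_0^{\infty}\nu_0((x,\infty)\times(1,\infty))\,\mathrm dx<\infty$. Strict positivity is immediate from $\int_0^{\infty}\nu_0((x,\infty)\times(1,\infty))\,\mathrm dx\ge\int_1^{\infty}\nu_0((x,\infty)\times(1,\infty))\,\mathrm dx>0$, again established in \cref{thm:MME_AI}. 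Finally I would assemble the four pieces $\int_0^{1/M}$, $\int_{1/M}^1$, $\int_1^M$, $\int_M^{\infty}$, let $t\to\infty$ and then $M\to\infty$ exactly as in the closing display of \cref{thm:MME_AI}: the two inner integrals converge to $\int_0^1\nu_0((x,\infty)\times(1,\infty))\,\mathrm dx$ and $\int_1^{\infty}\nu_0((x,\infty)\times(1,\infty))\,\mathrm dx$, while the two outer integrals vanish through $\phi_M\to0$ (by (B2)) and $\psi_M\to0$ (by (B1)), giving the common value $\int_0^{\infty}\nu_0((x,\infty)\times(1,\infty))\,\mathrm dx$. The main obstacle throughout is the failure of uniform integrability of the family $\nu_t$ near the origin; Assumption (B2) is exactly the hypothesis that tames it, and once the cutoff-at-$1/M$ splitting is in place the argument is a routine extension of the MME proof.
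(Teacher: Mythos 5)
Your proof is correct and follows essentially the same route as the paper: the paper likewise rewrites the scaled MES as $\left[\int_0^{1/M}+\int_{1/M}^{M}+\int_M^\infty\right]\frac{\P(Z_1>tx,Z_2>t)}{\P(Z_1>t,Z_2>t)}\,\mathrm dx$, disposes of the two outer pieces via (B2) and (B1), and handles the middle piece by the dominated-convergence argument of \cref{thm:MME_AI}. Your explicit dominating constant $\nu_t(1/M)$ on $[1/M,1]$ is precisely the detail the paper leaves to the reader with the phrase ``similar arguments.''
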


The proof of   \cref{thm:MES} requires  further condition (B2) which can be avoided in \cref{thm:MME_AI}.

\begin{proof}
The proof is similar to that of \cref{thm:MME_AI} which we discussed in detail.
As in \Cref{thm:MME_AI}  we rewrite
\begin{align*}
\frac{pb_0^{\la}(b_2(1/p))}{b_2(1/p)} \MES(p)  & = \frac{\overline{F}_{Z_2}(t)b_0^{\la}(t)}{t} \;\E(Z_1|Z_2>t)
                                 =  \left[\int_0^{1/M}+\int_{1/M}^M+\int_{M}^{\infty}\right] \frac{\P(Z_1>tx, Z_2>t)}{\P(Z_1>t, Z_2>t)} \;\mathrm dx.
							\end{align*}
We can then conclude the statement from (B2) and similar arguments as in the proof of \cref{thm:MME_AI}.
\end{proof}

A similar comparison can be made between the asymptotic behavior of the Marginal Expected Shortfall
for the tail independent and tail dependent case as we have done in \Cref{Example 4.3} for the Marginal Mean Excess.\\

\begin{Remark} \label{Remark:2}
Define $$a(t):=\frac{b_0^{\la}(b_2(t))}{t \,b_2(t)}.$$ Then $\lim_{t\to\infty}a(t)= 0$ is equivalent to $$\lim_{t\to\infty}\frac{\P(Z_2>t)}{t\P(Z_1>t, Z_2>t)}=0.$$ Hence, a consequence of (B2)
   and  \eqref{B2*} is that  $\lim_{t\to\infty}a(t)=0$ and finally,  $\lim_{p\downarrow 0}\MES(p)=\infty$.
    Again a sufficient assumption for $\lim_{t\to\infty}a(t)=0$ is $\ov F_{Z_2}\in \RV_{-\beta}$ with $\alpha_0<\beta+1$
    and a necessary condition is $\alpha_0\leq \beta+1$  (see~\Cref{Lemma:forAssB}).\\
\end{Remark}
\begin{Remark}\label{remark:3.7}
In this study we have only considered a non-negative random variable $Z_1$ while computing $\MES(p) = \E(Z_1|Z_2>\VaR_{{1-p}}(Z_2))$. For a real-valued random variable $Z_1$, we can represent $Z_1=Z_1^{+}-Z_1^{-}$ where $Z_1^{+}=\max(Z_1,0)$ and $Z_2^{{-}} =\max(-Z_2,0)$. Here both $Z_1^{+}$
and $Z_1^{-}$ are non-negative and hence can be dealt with separately. The limit results will depend on the separate dependence structure and tail behaviors of $(Z_1^{+},Z_2)$ and $(Z_1^{-},Z_2)$. \end{Remark}


\subsection{Illustrative models and examples}  \label{subsec:Models}

We finish this section up with a few models and examples where we can calculate limits for $\MES$ and $\MME$. 
In Sections \ref{subsec:mix} and \ref{subsec:bern} we discuss generative models with sufficient conditions satisfying  Assumptions \ref{cond:basic} and \ref{cond:dct1}.  In Section \ref{subsec:copula} we further discuss two copula models where Theorems \ref{thm:MME_AI} and \ref{thm:MES} can be applied. 

\subsubsection{Mixture representation}\label{subsec:mix}

 First we look at models that are generated in an additive fashion (see \citep{weller:cooley:2014,das:resnick:2015}). We will observe that many models can be generated using the additive technique. 

\begin{modelletter} \label{additive:model}
Suppose $\bZ=(Z_1,Z_2),\bY=(Y_1,Y_2),\bV=(V_1,V_2)$ are random  vectors in $\left[0,\infty\right)^2$ such that $\bZ=\bY+\bV$. Assume the following holds:
\begin{enumerate}[(C1)]
\item $\bY \in \MRV(\alpha,b,\nu,\Esp)$ where $\alpha\geq 1$.
\item $Y_1,Y_2$ are independent random variables.
\item $\ov F_{Y_2}\in\RV_{-\alpha^*}$, $1\leq \alpha\leq \alpha^*$.
\item $\bV \in \MRV(\alpha_0,b_0,\nu_0,\Esp)$ and does not possess asymptotic tail independence where $\alpha\leq \alpha_0$ and
\begin{eqnarray*}
    \lim_{t\to\infty}\frac{\P(\|\bV\|>t)}{\P(\|\bY\|>t)}=0.
\end{eqnarray*}
\item $\bY$ and $\bV$ are independent.
\item $\alpha\leq  \alpha_0< 1+\alpha^*$.
\item $\E|Z_1|<\infty$.
\end{enumerate}
\end{modelletter}


Of course, we would like to know, when  Model C satisfies Assumptions \ref{cond:basic} and \ref{cond:dct1}; moreover, when is
$Z\in\HRV({\alpha_0}, b_0, \nu_0, \Esp_0)$? The next theorem provides
a general result to answer these questions in certain special cases.

\begin{Theorem}  \label{theorem:5.3}
Let  $\bZ=\bY+\bV$ be as in Model~\ref{additive:model}.
Then the following statements hold:
\begin{itemize}
    \item[(a)]  $\bZ \in \MRV({\alpha}, b, \nu, \Esp)\cap \HRV({\alpha_0}, b_0, \nu_0, \Esp_0)$ and satisfies \Cref{cond:dct1}.
    \item[(b)] Suppose $Y_1=0$. Then  $(Z_1,Z_1+Z_2)\in \MRV({\alpha}, b, \nu, \Esp)\cap \HRV({\alpha_0}, b_0, \nu_0^+, \Esp_0)$ with
    \begin{eqnarray*}
        \nu_0^{+}(A)&=&\nu_0(\{(v_1,v_2)\in\Esp_0:(v_1,v_1+v_2)\in A\}) \quad \text{ for }A\in\mathcal{B}(\Esp_0)
    \end{eqnarray*}
    and satisfies \Cref{cond:dct1}.
    \item[(c)] Suppose $\liminf_{t\to\infty} \P(Y_1>t)/\P(Y_2>t)>0$. Then $(Z_1,\min(Z_1,Z_2))\in \MRV({\alpha}, b, \nu^{\text{min}}, \Esp)\cap \HRV({\alpha_0}, b_0, \nu_0^{\text{min}}, \Esp_0)$ with
        \begin{eqnarray*}
        \nu^{\text{min}}(A)&=&\nu(\{(y_1,0)\in\Esp:(y_1,0)\in A\}) \quad \text{ for }A\in\mathcal{B}(\Esp),\\
        \nu_0^{\text{min}}(A)&=&\nu_0(\{(v_1,v_2)\in\Esp_0:(v_1,\min(v_1,v_2))\in A\}) \quad \text{ for }A\in\mathcal{B}(\Esp_0)
    \end{eqnarray*}
    and satisfies \Cref{cond:dct1}.
    \item[(d)] Suppose $Y_1=0$. Then $(Z_1,\max(Z_1,Z_2))\in \MRV({\alpha}, b, \nu, \
    E)\cap \HRV({\alpha_0}, b_0, \nu_0^{\text{max}}, \Esp_0)$ with
    \begin{eqnarray*}
        \nu_0^{\text{max}}(A)&=&\nu_0(\{(v_1,v_2)\in\Esp_0:(v_1,\max(v_1,v_2))\in A\}) \quad \text{ for }A\in\mathcal{B}(\Esp_0)
    \end{eqnarray*}
    and satisfies \Cref{cond:dct1}.
    \end{itemize}
\end{Theorem}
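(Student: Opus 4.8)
The plan is to establish part~(a) directly from the additive structure and independence, and then to obtain parts~(b)--(d) by transporting the regular variation through explicit continuous, positively homogeneous maps. Write $\alpha_{Y_1}\ge\alpha\ge1$ for the tail index of $Y_1$ (the heavier of the two marginals of $\bY$ has index $\alpha$, so this holds, possibly with $\alpha_{Y_1}=+\infty$), and recall $b\in\RV_{1/\alpha}$, $b_0\in\RV_{1/\alpha_0}$, so that $b_0^{\la}\in\RV_{\alpha_0}$. For the $\MRV$ statement on $\Esp$ in (a), the point is that $\bV$ is tail-negligible at the scale $b$: by (C4), $\P(\|\bV\|>t)=o(\P(\|\bY\|>t))$, so $\bZ=\bY+\bV$ inherits the limit $\nu$ of $\bY$. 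Concretely I would bound, for a rectangle $A$ bounded away from $(0,0)$, the probability $\P(\bZ/b(t)\in A)$ above and below by shifting the thresholds defining $A$ by $\pm\varepsilon b(t)$, using $\bY\perp\bV$ (C5) to split the mass, absorbing the $\bV$-contribution into an $o(1/t)$ term, and letting $\varepsilon\downarrow0$; this is the standard ``adding a lighter independent summand'' argument (cf.\ \cite{das:resnick:2015,weller:cooley:2014}).

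The crux is the $\HRV$ statement on $\Esp_0=(0,\infty)^2$, where the scale is $b_0$ and I claim all the mass comes from $\bV$, which is \emph{not} tail independent (C4). For $A=(x,\infty)\times(y,\infty)$ with $x,y>0$, I would use $\{Y_i+V_i>z\}\subseteq\{Y_i>z/2\}\cup\{V_i>z/2\}$ in each coordinate to split $\{\bZ/t\in A\}$ into a ``diagonal'' part $\{V_1>xt,V_2>yt\}$ and remainder parts that always pair one $Y$-tail with an independent second tail. The diagonal part gives $b_0^{\la}(t)\P(\bV/t\in A)\to\nu_0(A)$, while each remainder is a product of independent tails (using (C2) and (C5)); after multiplying by $b_0^{\la}\in\RV_{\alpha_0}$, the pure-$\bY$ term lies in $\RV_{\alpha_0-\alpha_{Y_1}-\alpha^*}$ and the two mixed terms in $\RV_{-\alpha^*}$ and $\RV_{-\alpha_{Y_1}}$. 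Since $\alpha_{Y_1}\ge1$ and $\alpha_0<1+\alpha^*$ by (C6), all three exponents are strictly negative, so every remainder vanishes; a matching lower bound is immediate from nonnegativity, $\{V_1>xt,V_2>yt\}\subseteq\{Z_1>xt,Z_2>yt\}$. This yields $\bZ\in\HRV(\alpha_0,b_0,\nu_0,\Esp_0)$, and asymptotic tail independence is then automatic by the lemma showing $\MRV\cap\HRV$ implies it. For \Cref{cond:dct1}: the tail of $Z_2$ is governed by $Y_2$ (the $\bV$-part being negligible), so $\ov F_{Z_2}\in\RV_{-\alpha^*}$, i.e.\ $\beta=\alpha^*$, and (C6) is exactly the sufficient condition $\alpha_0<1+\beta$ of \Cref{Lemma:forAssB}(b), giving \eqref{B2*} and hence (B2); (B1) follows from $\E|Z_1|<\infty$ (C7) together with the one-big-jump bound $\P(Z_1>xt,Z_2>t)\lesssim\P(Y_1>xt)\,\P(Z_2>t)$ for large $x$ (valid since $Y_1\perp Z_2$), whose $x$-tail is integrable by Karamata.

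Parts~(b)--(d) are images of $\bZ$ under $T(z_1,z_2)=(z_1,z_1+z_2)$, $(z_1,\min(z_1,z_2))$ and $(z_1,\max(z_1,z_2))$, each continuous and positively homogeneous of degree one. For the interior ($\Esp_0$) conclusions I would apply the $\M$-convergence mapping theorem (\cite{lindskog:resnick:roy:2014}) to the \emph{full-space} convergence $\bV\in\MRV(\alpha_0,b_0,\nu_0,\Esp)$ --- legitimate because, exactly as in the interior argument of (a), the $\bY$-contribution is negligible at scale $b_0$ once the first coordinate is forced large. Each $T$ is proper into the target $\Esp_0$ since an exceedance of the first coordinate (respectively of the minimum) keeps the preimage bounded away from the origin, and the resulting pushforwards $\nu_0\circ T^{-1}$ are precisely $\nu_0^{+},\nu_0^{\min},\nu_0^{\max}$. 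The side hypotheses fix the geometry and the $\Esp$-limits: $Y_1=0$ in (b),(d) forces $\alpha^*=\alpha$ and concentrates $\nu$ on the $Z_2$-axis, while $\liminf_t\P(Y_1>t)/\P(Y_2>t)>0$ in (c) keeps both marginals of $\bY$ tail-equivalent, so $\nu^{\min}$ is the stated nondegenerate measure on the axis. \Cref{cond:dct1} for each transformed vector then follows as in (a), since the new conditioning coordinate keeps a regularly varying tail and the pushforward of $\nu_0$ retains homogeneity of order $-\alpha_0$.

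The main obstacle is the interior analysis of (a): rigorously controlling the convolution $\bY+\bV$ on $\Esp_0$ and showing that every non-diagonal term is negligible at the hidden scale $b_0^{\la}\in\RV_{\alpha_0}$ --- this is precisely where (C6), $\alpha_0<1+\alpha^*$, together with $\alpha_{Y_1}\ge1$, is indispensable. A secondary point is the bookkeeping of mass on the coordinate axes when transporting $\nu_0$ through the $\min$/$\max$/sum maps, which is what forces one to work with $\bV$'s convergence on the full cone $\Esp$ (not merely on $\Esp_0$) and to verify properness of each map with care.
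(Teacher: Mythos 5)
The paper itself contains no proof of this theorem --- immediately after the statement it says ``For a proof of this theorem we refer to \cite{Fasen:Das:2017}'' --- so your proposal can only be assessed on its own merits. The regular-variation parts of your argument are essentially sound: the $\MRV$ claim on $\Esp$ is the standard lighter-independent-summand argument, and your $\HRV$ decomposition into a diagonal $\bV$-term plus three remainder terms, with exponent bookkeeping $\alpha_0-\alpha_{Y_1}-\alpha^*<0$, $-\alpha^*<0$, $-\alpha_{Y_1}<0$ via (C6) and $\alpha_{Y_1}\ge\alpha\ge 1$, is the right idea. (One repair: the crude $z/2$-splitting gives the upper bound $\nu_0((x/2,\infty)\times(y/2,\infty))$, not $\nu_0(A)$; you must use the $\varepsilon$-shifted splitting $\{Y_i+V_i>z\}\subseteq\{Y_i>\varepsilon z\}\cup\{V_i>(1-\varepsilon)z\}$ and let $\varepsilon\downarrow 0$, as you did on $\Esp$.) The mapping-theorem strategy for (b)--(d), including working with $\bV$'s convergence on the full cone to sidestep the failure of properness of $(z_1,z_2)\mapsto(z_1,z_1+z_2)$ relative to $\Esp_0$, is also reasonable.

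The genuine gaps are in your verification of \Cref{cond:dct1}, and both halves fail as written. For (B2) you argue that $\ov F_{Z_2}\in\RV_{-\alpha^*}$, that (C6) is the sufficient condition $\alpha_0<1+\beta$ of \Cref{Lemma:forAssB}(b), ``giving \eqref{B2*} and hence (B2).'' This inverts the logic of that lemma: it proves (B2) $\Rightarrow$ \eqref{B2*} and that $\alpha_0<\beta+1$ suffices for the \emph{weaker} statement \eqref{B2*}; nothing allows you to pass back from \eqref{B2*} to (B2), and the paper's surrounding remarks emphasize exactly this asymmetry. A correct proof must estimate $\int_0^{1/M}\P(Z_1>xt,Z_2>t)\,\mathrm dx$ directly through the $Y/V$ decomposition (the $Y_1$-contribution is controlled by $\E Y_1\cdot\P(Z_2>t)/(t\,\P(Z_1>t,Z_2>t))$, which is where $\alpha_0<\beta+1$ enters; the $\bV$-joint contribution by an $M^{-1}$-type bound using finiteness of $\nu_0((0,\infty)\times(1/2,\infty))$). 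Moreover your identification $\beta=\alpha^*$ is wrong in general: (C4) makes $\bV$ lighter than $\bY$'s $\RV_{-\alpha}$ tail, not lighter than $Y_2$'s $\RV_{-\alpha^*}$ tail, and tail dependence of $\bV$ forces $\ov F_{V_2}\in\RV_{-\alpha_0}$, so $\ov F_{Z_2}\in\RV_{-\min(\alpha^*,\alpha_0)}$. For (B1), your ``one-big-jump'' bound $\P(Z_1>xt,Z_2>t)\lesssim\P(Y_1>xt)\,\P(Z_2>t)$ is false: it omits the contribution $\P(V_1>xt,\,V_2>t)$, which is precisely the dominant term (it is what creates the hidden limit $\nu_0$). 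For instance with $V_1=V_2\sim$ Pareto$(\alpha_0)$ one has $\P(V_1>xt,V_2>t)\asymp (xt)^{-\alpha_0}$ for $x\ge 1$, which for large $t$ dwarfs $\P(Y_1>xt)\,\P(Z_2>t)\asymp x^{-\alpha_{Y_1}}t^{-\alpha_{Y_1}-\beta}$, exactly because $\alpha_0<\alpha_{Y_1}+\beta$ in this model. So (B1) must instead be verified by controlling $b_0^{\la}(t)\int_M^\infty\P(V_1>xt,V_2>t)\,\mathrm dx$, e.g.\ via homogeneity, $\nu_0((x,\infty)\times(1,\infty))\le x^{-\alpha_0}\nu_0((1,\infty)\times[0,\infty))$, giving a tail bound of order $M^{1-\alpha_0}$ --- which also exposes that the argument needs $\alpha_0>1$ (or extra integrability in the boundary case $\alpha_0=1$), a requirement your proposal never surfaces.
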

For a proof of this theorem we refer to \cite{Fasen:Das:2017}.
\begin{Remark} \label{remark 2.14}
 Note that, in a systemic risk context where the entire system consists of two institutions with 
 risks $Z_{1}$ and $Z_{2}$, the above theorem addresses the variety of ways a systemic risk model can be constructed. If risk is just additive we could refer to part (b), if the system is at risk when both institutions are at risk then we can refer to part (c) and if the global risk is connected to any of the institutions being at risk then we can refer to the model in part (d). Hence, many kinds of models for calculating systemic risk can be obtained under such a model assumption.
\end{Remark}

\subsubsection{Bernoulli model}\label{subsec:bern}

Next we investigate an example generated by using a mixture method for getting hidden regular variation in a non-standard regularly varying model (see \citep{das:mitra:resnick:2013}).

\begin{Example}\label{ex:ex1} Suppose $X_1,X_2,X_3$ are independent  Pareto random variables with parameters $\alpha$, $\alpha_0$ and $\gamma$, respectively, where $1<\alpha<\alpha_0<\gamma$ and $\alpha_0<1+\alpha$.
Let $B$ be a Bernoulli$(q)$ random variable  with $0<q<1$ and  independent of $X_1,X_2,X_3$. Now define
\[\bZ=(Z_1, Z_2) = B(X_1,X_3) + (1-B)(X_2,X_2).\]
This is a popular example, see \cite{resnick:2002,maulik:resnick:2005,das:resnick:2015}.  Note that
\begin{eqnarray*}
    \P(\max(Z_1,Z_2)>t)\sim qt^{-\alpha}\quad \mbox{ and }\quad \P(\min(Z_1,Z_2)>t)\sim \P(Z_2>t)\sim (1-q)t^{-\alpha_0} \quad(t\to\infty),
\end{eqnarray*}
so that $b(1/p)\sim q^{\frac{1}{\alpha}}p^{-\frac{1}{\alpha}}$, $b_0(1/p)\sim b_2(1/p)\sim (1-q)^{\frac{1}{\alpha_0}} p^{-\frac{1}{\alpha_0}}$ as $p\downarrow 0$.
We denote by $\epsilon_x$, the Dirac measure at point $x$. Note that the limit
measure on $\Esp$ concentrates on the two axes. We will look at usual MRV which is given on $\Esp$ by
$$t\,\P\left( \left(\frac{Z_1}{b(t)}, \frac{Z_2}{b(t)} \right) \in \dx \,\dy \right)
 \stackrel{\M}{\to}   \alpha x^{-\alpha-1}\dx \cdot \epsilon_0 (\dy)  =:\nu(\dx\; \dy) \quad (t\to \infty) \quad \mbox{ in }\M(\mathbb{E}), $$
where the limit measure lies on the x-axis. Hence, we seek HRV in the next step on $\Esp\backslash\{x\text{-axis}\}=\left[0,\infty\right)\times(0,\infty)$ and get
$$t\,\P\left( \left(\frac{Z_1}{b_0(t)}, \frac{Z_2}{b_0(t)} \right) \in \dx \,\dy\right)
 \stackrel{\M}{\to}  \alpha_0x^{-\alpha_0-1}\dx \cdot \epsilon_x (\dy) =:\nu_0(\dx\; \dy) \quad (t\to \infty) \quad \mbox{ in }\M(\Esp\backslash\{x\text{-axis}\}).$$
Here  the limit measure lies on the diagonal where $x=y$. Thus, we have for any $x\ge 1$,
\[\nu_0((x,\infty)\times(1,\infty)) =   x^{-\alpha_0}.
  \]
Now, we can explicitly calculate the values of MME and MES. For $0<p<1$:
\begin{align*}
\MES(p)
    &= \frac{1}{q\VaR_{1-p}(Z_2)^{-\gamma}+(1-q)\VaR_{1-p}(Z_2)^{-\alpha_0}} \left[ \frac{q\alpha}{\alpha-1}\VaR_{1-p}(Z_2)^{-\gamma} + \frac{(1-q)\alpha_0}{\alpha_0-1}\VaR_{1-p}(Z_2)^{-\alpha_0+1}\right],\\
\MME(p)
    &= \frac{1}{q\VaR_{1-p}(Z_2)^{-\gamma}+(1-q)\VaR_{1-p}(Z_2)^{-\alpha_0}} \left[ \frac{q}{\alpha-1}\VaR_{1-p}(Z_2)^{-\gamma-\alpha+1} + \frac{(1-q)}{\alpha_0-1}\VaR_{1-p}(Z_2)^{-\alpha_0+1}\right].
\end{align*}
  Therefore,
    \begin{align*}
  \frac{pb_0^{\la}(b_2(1/p))}{b_2(1/p)} \MME(p) & \sim   \frac{1}{b_2(1/p)} \MME(p)\sim \frac{1}{\alpha_0-1} = \int_1^{\infty} \nu_0((x,\infty)\times(1,\infty)) \, \dx \quad (p\downarrow 0),
  \end{align*}
and
  \begin{align*}
  \frac{pb_0^{\la}(b_2(1/p))}{b_2(1/p)} \MES(p) & \sim   \frac{1}{b_2(1/p)} \MES(p)\sim \frac{\alpha_0}{\alpha_0-1} = \int_0^{\infty} \nu_0((x,\infty)\times(1,\infty)) \, \dx
  \quad (p\downarrow 0).
  \end{align*}
\end{Example}

\subsubsection{Copula models} \label{subsec:copula}
The next two examples constructed by well-known copulas (see \cite{Nelsen}) are illustrative of the limits which we are able to compute using Theorems \ref{thm:MME_AI} and \ref{thm:MES}.
\begin{Example}\label{example:gausscop}
 In financial risk management, no doubt the most famous copula model is the \emph{Gaussian copula}:
\begin{eqnarray*}
    C_{\Phi,\rho}(u,v)=\Phi_2(\Phi^{\leftarrow}(u),\Phi^{\leftarrow}(v)) \quad \text{for } (u,v)\in[0,1]^2,
\end{eqnarray*}
where $\Phi$ is the standard-normal distribution function and $\Phi_2$ is a bivariate normal distribution function
with standard normally distributed margins and correlation $\rho$. Then  the survival copula  satisfies:
\begin{eqnarray*}
    \wh C_{\Phi,\rho}(u,u)=C_{\Phi,\rho}(u,u)\sim u^{\frac{2}{\rho+1}}\ell(u) \quad (u\to0),
\end{eqnarray*}
for some function $l$ which is slowly varying at 0, see \cite{reiss:1989,Ledford:Tawn}.
Suppose $(Z_1,Z_2)$  has identical Pareto marginal distributions with common parameter $\alpha>0$  and  a dependence structure given by a  Gaussian copula
$ C_{\Phi,\rho}(u,v)$ with $\rho\in(-1,1)$.
Now we can check that  $(Z_1,Z_2) \in \MRV(\alpha,b,\nu,\mathbb{E})$ with asymptotic tail independence and $(Z_1,Z_2)\in \MRV(\alpha_0,b_0,\nu_0,\mathbb{E}_0)$ with
\begin{align*}
\alpha_0 =\frac{2\alpha}{1+\rho} \quad \mbox{ and } \quad
   \nu_0((x,\infty)\times(y,\infty)) =           x^{-\frac{\alpha}{1+\rho}}y^{-\frac{\alpha}{1+\rho}}, \quad x,y>0 .
 \end{align*}
        Hence, for $\rho\in(1-2/(\alpha+1),1)$  we have $\lim_{p\to 0}\MME(p)=\infty$. In this model, Assumptions \ref{cond:basic} and (B1) are satisfied when  $\alpha>1+\rho$ and $\alpha>1$.
We can also check that Assumption (B2) is not satisfied.
Consequently, we can find estimates for $\MME$ but not for $\MES$ in this example.
\end{Example}
\begin{Example}\label{example:marolcop}
 Suppose $(Z_1,Z_2)$  has identical Pareto marginal distributions with parameter $\alpha>0$  and  a dependence structure
    given by a    \emph{Marshall-Olkin survival copula}:
$$\wh C_{\gamma_1,\gamma_2}(u,v)=uv\min(u^{-\gamma_1},v^{-\gamma_2})  \quad \text{for } (u,v)\in[0,1]^2,$$ for some $\gamma_1,\gamma_2\in(0,1)$.
We can check that in this model, we have $(Z_1,Z_2) \in \MRV(\alpha,b,\nu,\Esp)$ with asymptotic tail independence and $(Z_1,Z_2)\in \MRV(\alpha_0,b_0,\nu_0,\Esp_{0})$ with
\begin{align*}
\alpha_0 &= \alpha\max(2-\gamma_1,2-\gamma_2) \quad \text{ and }\;\;\;\\
  \nu_0((x,\infty)\times(y,\infty)) &= \left\{ \begin{array}{ll}
                                    x^{-\alpha(1-\gamma_1)}y^{-\alpha},  & \gamma_1<\gamma_2,\\
                                    x^{-\alpha}y^{-\alpha}\max(x,y)^{-\alpha\gamma_1},    & \gamma_1=\gamma_2,\\
                                    x^{-\alpha}y^{-\alpha(1-\gamma_2)},  & \gamma_1>\gamma_2,\\
                                     \end{array} \right.    \quad x,y>0.
 \end{align*}
         Then $\min(\gamma_1,\gamma_2)\in(1-1/\alpha,1)$ implies $\lim_{p\to 0}\MME(p)=\infty$.   Moreover this model satisfies Assumptions \ref{cond:basic} and (B1) when $\gamma_1\ge\gamma_2$.  Unfortunately again, (B2) is not satisfied.
\end{Example}   


\section{Estimation of  MME and MES} \label{sec:estimation}

\subsection{Empirical estimators  for the MME and the  MES} \label{subsec:empirical}

\subsubsection{Empirical estimator for the MME}
Suppose $(Z^{(1)}_{1},Z^{(2)}_{1}), \ldots, (Z^{(1)}_{n},Z^{(2)}_{n}) $ are iid samples with the same distribution as  $(Z_{1},Z_{2})$.
We denote by  $Z_{(1:n)}^{(2)}\geq \ldots\geq Z_{(n:n)}^{(2)}$ the order statistic of the  sample $Z_1^{(2)},\ldots,Z_n^{(2)}$ in decreasing order. We begin by looking at the behavior of the empirical estimator
\begin{eqnarray*}
    \wh{\MME}_{\text{emp},n}\left(k/n\right):=\frac{1}{k}\sum_{i=1}^n (Z_i^{(1)}-Z^{(2)}_{(k:n)})_+\1_{\{Z_i^{(2)}>Z^{(2)}_{(k:n)}\}}
\end{eqnarray*}
of the quantity $\MME(k/n)=\E((Z_1-b_2(n/k))_+|Z_2>b_2(n/k))$ with $k<n$. The following theorem shows that the empirical estimator is consistent in probability.

\begin{Proposition} \label{prop:empirical:MME}
Let the assumptions of \cref{thm:MME_AI} hold, and let $\ov F_{Z_2}\in\RV_{-\beta}$ for some $\alpha\leq\beta\leq\alpha_0$. Furthermore, let $k=k(n)$ be a sequence of
integers satisfying $k\to\infty$, $k/n\to 0$ and $b_0^\la(b_2(n/k))/n\to 0$ as $n\to\infty$ (note that this is trivially satisfied if $b_0=b_2$). 
\begin{itemize}
\item[(a)] Then, as $n\to\infty$,
\begin{eqnarray*}
    \frac{b_0^{\leftarrow}(b_2(n/k))}{b_2(n/k)}\frac{1}{n}\sum_{i=1}^n (Z_i^{(1)}-Z^{(2)}_{(k:n)})_+\1_{\{Z_i^{(2)}>Z^{(2)}_{(k:n)}\}}
    \stackrel{P}{\to}\int_1^{\infty}\nu_0((x,\infty)\times(1,\infty))\, \mathrm dx.
\end{eqnarray*}
\item[(b)] In particular, we have
${\displaystyle
    \frac{\wh{\MME}_{\text{\rm emp},n}\left(k/n\right)}{\MME(k/n)}\stackrel{P}{\to}1}$ as $n\to\infty$.
\end{itemize}
\end{Proposition}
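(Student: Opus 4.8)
The plan is to obtain part~(b) at once from part~(a) together with \cref{thm:MME_AI}, and to prove part~(a) by replacing the data-dependent threshold $Z^{(2)}_{(k:n)}$ by the deterministic level $t_n:=b_2(n/k)$, establishing a weak law of large numbers for the resulting oracle statistic, and bridging the two by a monotonicity sandwich. Throughout I write $v_n:=b_0^{\la}(t_n)=1/\P(Z_1>t_n,Z_2>t_n)$, $a_n:=v_n/t_n$, and $I:=\int_1^{\infty}\nu_0((x,\infty)\times(1,\infty))\,\mathrm dx$, which lies in $(0,\infty)$ by \cref{thm:MME_AI}. For a deterministic threshold $s>0$ set
\[
U_n(s):=a_n\,\frac1n\sum_{i=1}^n\bigl(Z_i^{(1)}-s\bigr)_+\1_{\{Z_i^{(2)}>s\}},
\]
so that $s\mapsto U_n(s)$ is non-increasing and the left-hand side of part~(a) equals $U_n(Z^{(2)}_{(k:n)})$. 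Since \cref{thm:MME_AI} gives $\frac{k}{n}a_n\,\MME(k/n)\to I>0$, once $U_n(Z^{(2)}_{(k:n)})\cinP I$ is established, dividing the two convergences (Slutsky) yields part~(b). Two standard facts will be used: because $\ov F_{Z_2}\in\RV_{-\beta}$ with $k\to\infty$ and $k/n\to0$, the intermediate order statistic satisfies $Z^{(2)}_{(k:n)}/t_n\cinP 1$; and $b_0^{\la}\in\RV_{\alpha_0}$ as the asymptotic inverse of $b_0\in\RV_{1/\alpha_0}$.

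I would first fix $c>0$ and analyse the deterministic-threshold statistic $U_n(ct_n)$. For its mean, the layer-cake identity from the proof of \cref{thm:MME_AI} and the substitution $x=cy\,t_n$ give
\[
\E\,U_n(ct_n)=\frac{b_0^{\la}(t_n)}{b_0^{\la}(ct_n)}\,c\int_1^{\infty}\frac{\P(Z_1>cy\,t_n,\,Z_2>ct_n)}{\P(Z_1>ct_n,\,Z_2>ct_n)}\,\mathrm dy.
\]
As $n\to\infty$ the integral tends to $I$ by exactly the dominated-convergence-plus-$\psi_M$ argument of \cref{thm:MME_AI} (this is where Assumption~(B1) enters), while $b_0^{\la}(t_n)/b_0^{\la}(ct_n)\to c^{-\alpha_0}$; hence $\E\,U_n(ct_n)\to c^{1-\alpha_0}I$.

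For the concentration I would truncate the excess at level $M\ge1$: let $V_n^{(M)}$ collect the summands with $Z_i^{(1)}\le Mct_n$ and $R_n^{(M)}:=U_n(ct_n)-V_n^{(M)}\ge0$ the rest. On the truncated region each summand of $V_n^{(M)}$ is at most $(M-1)c\,v_n$, so $\mathrm{Var}(V_n^{(M)})\le \tfrac{(M-1)c\,v_n}{n}\,\E V_n^{(M)}\to0$, the decisive point being the hypothesis $v_n/n=b_0^{\la}(b_2(n/k))/n\to0$; together with $\E V_n^{(M)}\to c^{1-\alpha_0}I_M$, where $I_M:=\int_1^M\nu_0((x,\infty)\times(1,\infty))\,\mathrm dx$ (a finite-interval dominated-convergence limit as in \eqref{DCT}), this gives $V_n^{(M)}\cinP c^{1-\alpha_0}I_M$. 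The remainder is non-negative with $\E R_n^{(M)}\to c^{1-\alpha_0}(I-I_M)$, and since $I_M\uparrow I<\infty$ this tail is made arbitrarily small by enlarging $M$; Markov's inequality then renders $R_n^{(M)}$ negligible, and letting $M\to\infty$ yields $U_n(ct_n)\cinP c^{1-\alpha_0}I$. This truncation step, forced by the unboundedness of $(x-1)_+\1_{\{y>1\}}$ in the $x$-direction — which prevents a direct appeal to $\M$-convergence — is the main obstacle; everything hinges on pairing the crude deterministic bound on the truncated summands with $v_n/n\to0$, and on controlling the tail mean through~(B1).

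Finally I would combine the pieces. Fix $\epsilon\in(0,1)$. On the event $\{(1-\epsilon)t_n\le Z^{(2)}_{(k:n)}\le(1+\epsilon)t_n\}$, whose probability tends to $1$, monotonicity gives
\[
U_n\bigl((1+\epsilon)t_n\bigr)\le U_n\bigl(Z^{(2)}_{(k:n)}\bigr)\le U_n\bigl((1-\epsilon)t_n\bigr),
\]
and the two bounds converge in probability to $(1\pm\epsilon)^{1-\alpha_0}I$ by the previous step. Hence $U_n(Z^{(2)}_{(k:n)})$ is trapped between quantities converging to $(1+\epsilon)^{1-\alpha_0}I$ and $(1-\epsilon)^{1-\alpha_0}I$; letting $\epsilon\downarrow0$ gives $U_n(Z^{(2)}_{(k:n)})\cinP I$, which is part~(a), and part~(b) follows as noted above.
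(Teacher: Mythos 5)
Your proof is correct, but it takes a genuinely different route from the paper's. The paper proceeds through a functional result (\Cref{Lemma:empirical:MME}): weak convergence of the empirical measures $\nu_0^{(n)}$ (via \cite[Theorem 5.3(ii)]{Resnick:2007} — this is exactly where the hypothesis $b_0^{\la}(b_2(n/k))/n\to 0$ enters there), continuous mapping plus a convergence-together argument using (B1) to get pointwise convergence of $E_n(y)$, a Glivenko--Cantelli-type argument to upgrade this to uniform convergence on $\left[1/2,\infty\right)$, and finally composition with the intermediate order-statistic process via Whitt's continuity theorem for the composition map. You bypass all of the $\mathbb{D}$-space machinery: a truncation-plus-Chebyshev law of large numbers at each deterministic threshold $ct_n$ (with $v_n/n\to 0$ making the variance vanish — the same role this hypothesis plays in the paper's appeal to Resnick), followed by a monotone sandwich to absorb the random threshold $Z^{(2)}_{(k:n)}$, which closes because the limit $c\mapsto c^{1-\alpha_0}I$ is continuous at $c=1$; this power law is precisely the homogeneity $E(y)=y^{1-\alpha_0}E(1)$ that the paper proves in Step 1 of its lemma. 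Your argument is more elementary and self-contained; the paper's route is heavier but delivers the stronger process-level convergence, which it then reuses essentially verbatim for the MES analogue (\Cref{Lemma:empirical:MES}). One bookkeeping slip worth fixing: with the hard truncation $\1_{\{Z_i^{(1)}\le Mct_n\}}$, the limit of $\E V_n^{(M)}$ is $c^{1-\alpha_0}\bigl(I_M-(M-1)\nu_0((M,\infty)\times(1,\infty))\bigr)$ rather than $c^{1-\alpha_0}I_M$, and correspondingly $\lim_n \E R_n^{(M)} = c^{1-\alpha_0}\bigl(\psi_M+(M-1)\nu_0((M,\infty)\times(1,\infty))\bigr)$ with $\psi_M$ as in \eqref{psiM} (Fatou only guarantees $\psi_M\ge I-I_M$, not equality). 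This is harmless: since $x\mapsto\nu_0((x,\infty)\times(1,\infty))$ is non-increasing and integrable on $\left[1,\infty\right)$, we have $M\,\nu_0((M,\infty)\times(1,\infty))\to 0$, and (B1) gives $\psi_M\to 0$, so the remainder is still negligible as $M\to\infty$ and your conclusion stands.
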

To prove this theorem we use the following lemma.

\begin{Lemma} \label{Lemma:empirical:MME}
Let the assumptions of \Cref{prop:empirical:MME} hold. Define for $y>0$,
\begin{eqnarray*}
    E_n(y)&:=&\frac{b_0^{\leftarrow}(b_2(n/k))}{b_2(n/k)}\frac{1}{n}\sum_{i=1}^n (Z_i^{(1)}-b_2(n/k)y)_+\1_{\{Z_i^{(2)}>b_2(n/k)y\}},\\
    E(y)&:=&\int_y^\infty\nu_0((x,\infty)\times(y,\infty))\,\mathrm dx.
\end{eqnarray*}
Then $E(y)=y^{1-\alpha_0}E(1)$ and as $n\to\infty$,
\begin{eqnarray*} 
(E_n(y))_{y\geq 1/2}\stackrel{{P}}{\to}(E(y))_{y\geq 1/2} \quad \mbox{ in }\mathbb{D}(\left[{1}/{2},\infty\right),(0,\infty)),
\end{eqnarray*}
where by $\mathbb{D}(I,\mathbb{E^*})$ we denote the space of  c\`{a}dl\`{a}g functions from $I\to\mathbb{E^*}.$
\end{Lemma}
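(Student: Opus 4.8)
The plan is to prove the scaling identity by a change of variables, establish pointwise convergence in probability $E_n(y)\cinP E(y)$ for each fixed $y\ge 1/2$, and then upgrade this to functional convergence in $\D([1/2,\infty),(0,\infty))$ using monotonicity of $E_n$ together with continuity of the limit. For the identity, substituting $x=yu$ in the definition of $E$ and using the homogeneity $\nu_0(cA)=c^{-\alpha_0}\nu_0(A)$ from \cref{def:mrv} gives
\begin{align*}
E(y)=y\int_1^\infty \nu_0\bigl(y\,((u,\infty)\times(1,\infty))\bigr)\,\mathrm du=y^{1-\alpha_0}\int_1^\infty \nu_0((u,\infty)\times(1,\infty))\,\mathrm du=y^{1-\alpha_0}E(1),
\end{align*}
where $E(1)\in(0,\infty)$ by \cref{thm:MME_AI}; in particular $E$ is continuous, non-increasing and $(0,\infty)$-valued on $[1/2,\infty)$, so the stated target space is the natural one.

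The key reduction is to write $E_n$ as an integral of a tail empirical measure. Setting $t=b_2(n/k)$ and using $(z-ty)_+=t\int_y^\infty \1\{z>tx\}\,\mathrm dx$ together with Fubini,
\begin{align*}
E_n(y)=\int_y^\infty T_n(x,y)\,\mathrm dx,\qquad T_n(x,y):=\frac{b_0^{\la}(t)}{n}\sum_{i=1}^n \1\{Z_i^{(1)}>tx,\ Z_i^{(2)}>ty\}.
\end{align*}
Here $\E[T_n(x,y)]=b_0^{\la}(t)\,\P(Z_1>tx,Z_2>ty)\to \nu_0((x,\infty)\times(y,\infty))$ by Assumption~(A5), while the variance is bounded by $(b_0^{\la}(t)/n)\,\E[T_n(x,y)]$, which vanishes because $b_0^{\la}(b_2(n/k))/n\to0$ by hypothesis; hence $T_n(x,y)\cinP\nu_0((x,\infty)\times(y,\infty))$ for each fixed $(x,y)$ with $y\ge1/2$.

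To pass from the integrand to the integral I would split $\int_y^\infty=\int_y^M+\int_M^\infty$. On $[y,M]$ the quantity $\int_y^M T_n(x,y)\,\mathrm dx$ is again an average of i.i.d.\ uniformly bounded summands; its mean converges to $\int_y^M\nu_0((x,\infty)\times(y,\infty))\,\mathrm dx$ by bounded convergence (the integrand is dominated by $\nu_0((y,\infty)\times(y,\infty))<\infty$), and its variance once more carries the factor $b_0^{\la}(b_2(n/k))/n\to0$, so it converges in probability to the same limit. For the tail part I would bound, for $y\ge1/2$,
\begin{align*}
\E\int_M^\infty T_n(x,y)\,\mathrm dx=\int_M^\infty\frac{\P(Z_1>tx,Z_2>ty)}{\P(Z_1>t,Z_2>t)}\,\mathrm dx\le \int_M^\infty\frac{\P(Z_1>tx,Z_2>t/2)}{\P(Z_1>t,Z_2>t)}\,\mathrm dx,
\end{align*}
and, after the substitution $s=t/2$ and a change of variable, the comparison $\P(Z_1>s,Z_2>s)/\P(Z_1>2s,Z_2>2s)\to 2^{\alpha_0}$ reduces this to Assumption~(B1), whence it tends to $0$ as $M\to\infty$ uniformly in $n$ (and, crucially, uniformly in $y\ge1/2$). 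A Markov inequality then renders the tail negligible in probability, and letting $M\to\infty$ yields $E_n(y)\cinP E(y)$ for each fixed $y$.

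Finally, every realisation $y\mapsto E_n(y)$ is non-increasing, since both $(Z_i^{(1)}-ty)_+$ and $\1\{Z_i^{(2)}>ty\}$ decrease in $y$, and the limit $E$ is continuous. I would therefore invoke the standard monotonicity upgrade (a probabilistic P\'olya argument): along any subsequence one extracts a further subsequence on which $E_n(q)\to E(q)$ almost surely, simultaneously for all $q$ in the countable dense set $\mathbb{Q}\cap[1/2,\infty)$; squeezing the monotone functions $E_n$ between nearby rational arguments, combined with continuity of $E$, forces locally uniform convergence on $[1/2,\infty)$ along that subsequence. This gives locally uniform convergence in probability, which is precisely convergence in $\D([1/2,\infty),(0,\infty))$ since the limit is continuous. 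I expect the main obstacle to be the tail-integral control: Assumption~(B1) is phrased for the box $(x,\infty)\times(1,\infty)$, so the delicate point is to dominate the shifted boxes $(x,\infty)\times(y,\infty)$ uniformly in $y\ge1/2$ and to confirm that the resulting bound still admits the $\lim_{M}\lim_{n}$ vanishing; by comparison the mean and variance computations, the scaling identity, and the monotonicity upgrade are routine.
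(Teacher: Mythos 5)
Your proof is correct, and while it follows the same overall skeleton as the paper's (truncate the $x$-integral at $M$, control the tail with Assumption~(B1), then upgrade pointwise convergence via monotonicity), it executes each step with genuinely different tools. For the scaling identity you use the homogeneity $\nu_0(yA)=y^{-\alpha_0}\nu_0(A)$ and a change of variables, which is more direct than the paper's route of computing $\lim_n \frac{b_0^{\la}(b_2(n/k))}{b_2(n/k)}\E\bigl((Z_1-b_2(n/k)y)_+\1_{\{Z_2>b_2(n/k)y\}}\bigr)$ in two different ways and equating the answers. For the convergence of the truncated integral, you use an elementary second-moment (Chebyshev) argument for the triangular array, exploiting $b_0^{\la}(b_2(n/k))/n\to0$ to kill the variance; the paper instead cites weak convergence of the tail empirical measure $\nu_0^{(n)}\Rightarrow\nu_0$ in $\M_+(\Esp_0)$ (Resnick's Theorem 5.3(ii), which also needs $b_0^{\la}(b_2(n/k))/n\to 0$) followed by a continuous-mapping argument with the compactly supported functions $f_{M,y}$. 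Your version is self-contained; the paper's buys machinery ($\nu_0^{(n)}$ and its weak convergence) that is conceptually reused in \Cref{prop:empirical:MME}. In the tail step you actually supply a detail the paper glosses over: (B1) is stated for boxes $(x,\infty)\times(1,\infty)$, whereas the bound involves $(x,\infty)\times(1/2,\infty)$, and your substitution $s=t/2$ together with $\P(Z_1>s,Z_2>s)/\P(Z_1>2s,Z_2>2s)\to 2^{\alpha_0}$ is exactly the reduction needed to invoke (B1); the paper simply asserts the convergence. Finally, for the functional upgrade you use a subsequence-plus-P\'olya argument yielding locally uniform convergence in probability, which is equivalent to convergence in $\D([1/2,\infty),(0,\infty))$ because the limit $E$ is continuous; the paper runs a quantitative Glivenko--Cantelli partition built from $E^{-1}$, which gives the slightly stronger conclusion $\sup_{y\ge 1/2}|E_n(y)-E(y)|\cinP 0$. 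Both suffice for the lemma as stated, and the stronger global uniformity is not needed downstream given the continuity of $E$ and its decay at infinity.
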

 
\begin{proof} $\mbox{}$\\
We already know from \cite[Theorem 5.3(ii)]{Resnick:2007}, $\bZ \in \MRV({\alpha_0}, b_0, \nu_0, \Esp_0)$ and \linebreak
$b_0^\la(b_2(n/k))/n\to 0$ that as $n\to\infty$,
\begin{eqnarray} \label{C.2}
    \nu_0^{(n)}:=\frac{b_0^\la(b_2(n/k))}{n}\sum_{i=1}^n\epsilon_{\left(\frac{Z_i^{(1)}}{b_2(n/k)},\frac{Z_i^{(2)}}{b_2(n/k)}\right)}
        \Rightarrow \nu_0 \quad \mbox{ in }\mathbb{M}_+(\mathbb{E}_0).
\end{eqnarray}
Note that
\begin{eqnarray*}
    E_n(y)=\int_y^\infty\nu_0^{(n)}((x,\infty)\times(y,\infty))\,\mathrm dx
        =\frac{b_0^\la(b_2(n/k))}{b_2(n/k)}\frac{1}{n}\sum_{i=1}^n (Z_i^{(1)}-b_2(n/k)y)_+\1_{\{Z_i^{(2)}>b_2(n/k)y\}}.
\end{eqnarray*}
Hence, the statement of the lemma is equivalent to
\begin{eqnarray} \label{C.3}
    \left(\int_y^\infty\nu_0^{(n)}((x,\infty)\times(y,\infty))\,\mathrm dx\right)_{y\geq\frac{1}{2}}\stackrel{{P}}{\to}(E(y))_{y\geq 1/2}
    \mbox{ in }\mathbb{D}(\left[{1}/{2},\infty\right),(0,\infty)).
\end{eqnarray}
We will prove \eqref{C.3} by a convergence-together argument.

\noindent \textbf{Step 1.} First we prove that $E(y)=y^{1-\alpha_0}E(1)$. Note that
\begin{align} \label{eq5.3}
    \lefteqn{\frac{b_0^{\leftarrow}(b_2(n/k))}{b_2(n/k)}\E((Z_1-b_2(n/k)y)_+\1_{\{Z_2>b_2(n/k)y\}})} \nonumber\\
        &=\int_1^\infty\frac{\P(Z_1>xb_2(n/k),Z_2>b_2(n/k)y)}{\P(Z_1>b_2(n/k),Z_2>b_2(n/k))}\,\mathrm dx \nonumber\\
        &=y\frac{\P(Z_1>b_2(n/k)y,Z_2>b_2(n/k)y)}{\P(Z_1>b_2(n/k),Z_2>b_2(n/k))}\int_1^\infty\frac{\P(Z_1>x(b_2(n/k)y),Z_2>b_2(n/k)y)}{\P(Z_1>b_2(n/k)y,Z_2>b_2(n/k)y)}\,\mathrm dx \nonumber\\
         &=y \cdot \frac{\P(Z_1>b_2(n/k)y,Z_2>b_2(n/k)y)}{\P(Z_1>b_2(n/k),Z_2>b_2(n/k))} \cdot \int_1^\infty\nu_{t} (x) \,\mathrm dx \nonumber\\ \intertext{(where $\nu_{t}$ is as defined in \eqref{def:nut} with $t=b_{2}(n/k)y$ )}
        &\stackrel{n\to\infty}{\to} y\cdot y^{-\alpha_0} \cdot \int_1^\infty\nu_0((x,\infty)\times(1,\infty))\,\mathrm dx=y^{1-\alpha_0}E(1).
\end{align}
The final limit follows from the definition of hidden regular variation and Theorem \ref{thm:MME_AI}.
On the other hand, in a similar manner as in \Cref{thm:MME_AI}, we can exchange the integral  and the
limit such that using \eqref{eqn:ccc} we obtain
\begin{eqnarray}  \label{eq5.4}
    \frac{b_0^{\leftarrow}(b_2(n/k))}{b_2(n/k)}\E((Z_1-b_2(n/k)y)_+\1_{\{Z_2>b_2(n/k)y\}}) \nonumber
        &=&\int_1^\infty\frac{\P(Z_1>x b_2(n/k),Z_2>b_2(n/k)y)}{\P(Z_1>b_2(n/k),Z_2>b_2(n/k))}\,\mathrm dx\\
        &\stackrel{n\to\infty}{\to}&\int_1^\infty\nu_0((x,\infty)\times(y,\infty))\,\mathrm dx=E(y).
\end{eqnarray}
Since \eqref{eq5.3} and \eqref{eq5.4} must be equal, we have $E(y)=y^{1-\alpha_0}E(1)$.

\noindent \textbf{Step 2.}
Now we prove that for any $y\geq 1/2$ and $M>0$,  as $n\to\infty$,
\begin{eqnarray} \label{C.4}
    E_n^{(M)}(y):=\int_{1}^M \nu_0^{(n)}((x,\infty)\times(y,\infty))\,\mathrm dx
    \stackrel{{P}}{\to}\int_{1}^M\nu_0((x,\infty)\times(y,\infty))\,\mathrm dx=:E^{(M)}(y).
\end{eqnarray}
Define the function $f_{M,y}:\Esp_0\to\left[0,M\right]$ as $f_{M,y}(z_1,z_2)=(\min\left(z_1,M\right)-y)\1_{\{z_1>y,z_2>y\}}$ which is continuous,
bounded and has compact support on $\Esp_0$ and for any $y\geq 1/2$ define $F_{M,y}:\mathbb{M}_+(\mathbb{E}_0)\to \R_+$ as
$$m\mapsto \int_{\mathbb{E}_0} f_{M,y}(z_1,z_2)\,m(dz_1,d z_2).$$
Here $m$ is a continuous map  on $\mathbb{M}_+(\mathbb{E}_0)$ under the vague topology.  Hence, using a continuous mapping theorem and \eqref{C.2} we get, as $n\to\infty$,
\begin{eqnarray} \label{5.4}
    \int_{1}^M \nu_0^{(n)}((x,\infty)\times(y,\infty))\,\mathrm dx=F_{M,y}(\nu_0^{(n)})
        \weak F_{M,y}(\nu_0) =\int_{1}^M\nu_0((x,\infty)\times(y,\infty))\,\mathrm dx
\end{eqnarray}
in $\R_+$. Since the right hand side is deterministic, the convergence holds in probability as well.

\noindent \textbf{Step 3.} Using  Assumption (B1),
\begin{eqnarray*}
    \E\left(\sup_{y\geq\frac{1}{2}}\int_M^{\infty}\nu_0^{(n)}((x,\infty)\times(y,\infty))\,\mathrm dx\right)
    & = & \E\left(\int_M^{\infty}\nu_0^{(n)}((x,\infty)\times(1/2,\infty))\,\mathrm dx\right)\\
    &=&{b_0^\la(b_2(n/k))}\int_M^{\infty}\P(Z_1>xb_2(n/k),Z_2>b_2(n/k)/2)\,\mathrm dx\\
    &=&\int_M^{\infty}\frac{\P(Z_1>xb_2(n/k),Z_2>b_2(n/k)/2)}{\P(Z_1>b_2(n/k),Z_2>b_2(n/k))}\,\mathrm dx\stackrel{n\to\infty,M\to\infty}{\to}0.
\end{eqnarray*}

\noindent \textbf{Step 4.}
Hence, a convergence-together argument (see \cite[Theorem 3.5]{Resnick:2007}), Step 2, Step 3 and $E^{(M)}(y)\to E(y)$ as $M\to\infty$ result in
$E_n(y)\stackrel{P}{\to}E(y)$ as $n\to\infty$.

\noindent \textbf{Step 5.} From Step 1,  the function $E:\left[1/2,\infty\right)\to\left(0,E(1/2)\right]$ is a decreasing, continuous function
as well as a bijection. Let $E^{-1}$ denote its inverse and define for $m\in\N$ and $k=1,\ldots,m$,
$$y_{m,k}:=E^{-1}\left(E(1/2)\frac{k}{m}\right).$$
As in the proof of the Glivenko-Cantelli-Theorem (see \cite[Theorem 20.6]{Billingsley:1995}) we have
\begin{eqnarray*}
    \sup_{y\geq 1/2}|E_n(y)-E(y)|\leq\frac{E(1/2)}{m}+\sup_{k=1,\ldots,m}|E_n(y_{m,k})-E(y_{m,k})|.
\end{eqnarray*}
Let $\epsilon>0$. Choose $m\in\N$ such that $m>2E(1/2)/\epsilon$. Then
\begin{eqnarray*}
    \P\left(\sup_{y\geq 1/2}|E_n(y)-E(y)|>\epsilon\right)&\leq&
    \P\left(\sup_{k=1,\ldots,m}|E_n(y_{m,k})-E(y_{m,k})|>E(1/2)m^{-1}\right)\\
    &\leq&\sum_{k=1}^m
    \P\left(|E_n(y_{m,k})-E(y_{m,k})|>E(1/2)m^{-1}\right)\stackrel{n\to\infty}{\to}0,
\end{eqnarray*}
where we used $E_n(y_{m,k})\stackrel{P}{\to}E(y_{m,k})$ as $n\to\infty$ for any $k=1,\ldots,m,\,m\in\N$ by Step 4.
Hence, we can conclude the statement.
\end{proof}
\hspace*{-0.5cm}\textbf{Proof of \Cref{prop:empirical:MME}.} \\
(a) \, By assumption, $\ov F_{Z_2}\in\RV_{-\beta}$. From \cite[p. 82]{Resnick:2007} we know that
\begin{eqnarray*}
    \left(\frac{Z^{(2)}_{(\lceil ky\rceil:n) }}{b_2(n/k)}\right)_{y> 0}\stackrel{P}{\to}\left(y^{-\frac{1}{\beta}}\right)_{y> 0} \quad
   \mbox{ in }\mathbb{D}(\left(0,\infty\right],(0,\infty))
\end{eqnarray*}
and in particular, this and \Cref{Lemma:empirical:MME} result in
\begin{eqnarray*}
    \left((E_n(y))_{y\geq\frac{1}{2}},\left(\frac{Z^{(2)}_{(\lceil ky\rceil:n) }}{b_2(n/k)}\right)_{y> 0}\right)\stackrel{P}{\to}\left((E(y))_{y\geq\frac{1}{2}},(y^{-\frac{1}{\beta}})_{y>0 }\right) \mbox{ in }\mathbb{D}(\left[1/2,\infty\right),(0,\infty))\times\mathbb{D}(\left(0,\infty\right],(0,\infty)).
\end{eqnarray*}
Let $\mathbb{D}^\downarrow(\left(0,2^\beta\right],\left[1/2,\infty\right))$ be a subfamily of $\mathbb{D}(\left(0,2^\beta\right],\left[1/2,\infty\right))$ consisting
of non-increasing functions. Let us similarly define $\mathbb{C}^\downarrow(\left(0,2^\beta\right],\left[1/2,\infty\right))$. Define
the map $\varphi:\mathbb{D}(\left[1/2,\infty\right),(0,\infty))\times \mathbb{D}^\downarrow(\left(0,2^\beta\right],\left[1/2,\infty\right))$ with $(f,g)\mapsto f\circ g$. From \cite[Theorem 13.2.2]{Whitt2002}, we already know that $\varphi$ restricted to $\mathbb{D}(\left[1/2,\infty\right),(0,\infty))\times \mathbb{C}^\downarrow(\left(0,2^\beta\right],\left[1/2,\infty\right))$ is continuous.
Thus, we can apply a continuous mapping theorem and obtain as $n\to\infty$,
\begin{eqnarray*}
    \left(E_n\left(\frac{Z^{(2)}_{(\lceil ky\rceil :n) }}{b_2(n/k)}\right)\right)_{y\in\left(0,2^{\beta}\right]}\stackrel{P}{\to}\left(E(y^{-\frac{1}{\beta}})\right)_{y\in\left(0,2^{\beta}\right]} \quad
    \mbox{ in } \mathbb{D}(\left(0,2^{\beta}\right],(0,\infty)).
\end{eqnarray*}
As a special case we get the marginal convergence as $n\to\infty$,
\begin{eqnarray*}
      \frac{b_0^{\leftarrow}(b_2(n/k))}{b_2(n/k)}\frac{1}{n}\sum_{i=1}^n Z_i^{(1)}\1_{\{Z_i^{(2)}>Z^{(2)}_{( k :n) }\}}&=&E_n\left(\frac{Z^{(2)}_{(k :n) }}{b_2(n/k)}\right)
      \stackrel{P}{\to}E(1)=\int_0^{\infty}\nu_0((x,\infty)\times(1,\infty))\,\mathrm dx.
\end{eqnarray*}

(b) \, Finally, from part (a) and \Cref{thm:MME_AI} we have
\begin{eqnarray*}
    \frac{\frac{1}{k}\sum_{i=1}^n Z_i^{(1)}\1_{\{Z_i^{(2)}>Z^{(2)}_{(k:n)}\}}}{\MME(k/n)}
        =\frac{\frac{b_0^{\leftarrow}(b_2(n/k))}{b_2(n/k)}\frac{1}{n}\sum_{i=1}^n Z_i^{(1)}\1_{\{Z_i^{(2)}>Z^{(2)}_{(k:n)}\}}}{
        \frac{\frac{k}{n}b_0^{\leftarrow}(b_2(n/k))}{b_2(n/k)}\MME(k/n)}
    \stackrel{P}{\to}\frac{\int_0^{\infty}\nu_0((x,\infty)\times(1,\infty))\,\mathrm dx}{\int_0^{\infty}\nu_0((x,\infty)\times(1,\infty))\,\mathrm dx}=1,
\end{eqnarray*}
which is what we needed to show.
\hfill$\Box$

\subsubsection{Empirical estimator for the MES}

An analogous result holds for the empirical estimator
\begin{eqnarray*}
    \wh{\MES}_{\text{emp},n}\left(k/n\right):=\frac{1}{k}\sum_{i=1}^n Z_i^{(1)}\1_{\{Z_i^{(2)}>Z^{(2)}_{(k:n)}\}}
\end{eqnarray*}
of $\MES(k/n)=\E(Z_1|Z_2>b_2(n/k))$ where $k<n$.

\begin{Proposition} \label{prop:empirical}
Let the assumptions of \cref{thm:MES} hold, and let $\ov F_{Z_2}\in\RV_{-\beta}$ for some $\alpha\leq\beta\leq\alpha_0$. Furthermore, let $k=k(n)$ be a sequence of
integers satisfying $k\to\infty$, $k/n\to 0$ and $b_0^\la(b_2(n/k))/n\to 0$ as $n\to\infty$.
\begin{itemize}
\item[(a)] Then, as $n\to\infty$,
\begin{eqnarray*}
    \frac{b_0^{\leftarrow}(b_2(n/k))}{b_2(n/k)}\frac{1}{n}\sum_{i=1}^n Z_i^{(1)}\1_{\{Z_i^{(2)}>Z^{(2)}_{(k:n)}\}}
    \stackrel{P}{\to}\int_0^{\infty}\nu_0((x,\infty)\times(1,\infty))\,\mathrm dx.
\end{eqnarray*}
\item[(b)] In particular,
${\displaystyle
    \frac{\wh{\MES}_{\text{emp},n}\left(k/n\right)}{\MES(k/n)}\stackrel{P}{\to}1}$, as $n\to\infty$
\end{itemize}
\end{Proposition}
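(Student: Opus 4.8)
The plan is to mirror the proof of \Cref{prop:empirical:MME} essentially verbatim, replacing the truncated integrand $(Z_1-t)_+$ by $Z_1$ everywhere, and supplying the one extra ingredient that controls the integrand near the origin, namely Assumption (B2) (which was not needed for the MME). First I would establish the MES-analogue of \Cref{Lemma:empirical:MME}. Setting
\[
\wt E_n(y):=\frac{b_0^{\leftarrow}(b_2(n/k))}{b_2(n/k)}\frac1n\sum_{i=1}^n Z_i^{(1)}\1_{\{Z_i^{(2)}>b_2(n/k)y\}}=\int_0^\infty\nu_0^{(n)}((x,\infty)\times(y,\infty))\,\mathrm dx
\]
(with $\nu_0^{(n)}$ as in \eqref{C.2}) and $\wt E(y):=\int_0^\infty\nu_0((x,\infty)\times(y,\infty))\,\mathrm dx$, the target is the functional convergence $(\wt E_n(y))_{y\geq 1/2}\stackrel{P}{\to}(\wt E(y))_{y\geq 1/2}$ in $\mathbb{D}([1/2,\infty),(0,\infty))$. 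The scaling identity $\wt E(y)=y^{1-\alpha_0}\wt E(1)$ follows from the $(-\alpha_0)$-homogeneity of $\nu_0$ by the change of variables in Step~1 of \Cref{Lemma:empirical:MME}, and finiteness $0<\wt E(1)<\infty$ is exactly the moreover-part of \Cref{thm:MES}.

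For the convergence I would split $\int_0^\infty=\int_0^{1/M}+\int_{1/M}^M+\int_M^\infty$. The middle piece converges by the point-process convergence \eqref{C.2} and a continuous-mapping argument with the test function $f_{M,y}(z_1,z_2)=(\min(z_1,M)-1/M)\1_{\{z_1>1/M,\,z_2>y\}}$, which is bounded, supported away from $\C_0$, and whose discontinuity set is $\nu_0$-null; this is Step~2 with the lower cut-off moved from $1$ to $1/M$. The tail piece $\int_M^\infty$ is controlled uniformly in $y\geq 1/2$ by (B1) exactly as in Step~3. The genuinely new contribution is the near-origin piece $\int_0^{1/M}$: since $\nu_0^{(n)}((x,\infty)\times(y,\infty))$ is non-increasing in $y$, its supremum over $y\geq 1/2$ is attained at $y=1/2$, so that
\[
\E\!\left(\sup_{y\geq 1/2}\int_0^{1/M}\nu_0^{(n)}((x,\infty)\times(y,\infty))\,\mathrm dx\right)=\int_0^{1/M}\frac{\P(Z_1>xb_2(n/k),Z_2>b_2(n/k)/2)}{\P(Z_1>b_2(n/k),Z_2>b_2(n/k))}\,\mathrm dx ,
\]
which I would show vanishes as $n\to\infty$ and then $M\to\infty$ using (B2); the shift from $Z_2>t$ to $Z_2>t/2$ in the numerator only produces a bounded, regularly varying factor (of order $2^{\alpha_0}$, absorbed by HRV) after a substitution $x\mapsto 2x$, so (B2) applies with $M$ replaced by $M/2$. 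A convergence-together argument (\cite[Theorem 3.5]{Resnick:2007}) then gives pointwise convergence $\wt E_n(y)\stackrel{P}{\to}\wt E(y)$, and the Glivenko--Cantelli-type monotonicity argument of Step~5 upgrades this to the uniform statement on $[1/2,\infty)$.

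Given this lemma, part (a) follows precisely as in \Cref{prop:empirical:MME}: I would couple $(\wt E_n(y))_{y\geq 1/2}$ with the order-statistic process $(Z^{(2)}_{(\lceil ky\rceil:n)}/b_2(n/k))_{y>0}\stackrel{P}{\to}(y^{-1/\beta})_{y>0}$ (from $\ov F_{Z_2}\in\RV_{-\beta}$ and \cite[p.~82]{Resnick:2007}), apply the continuity of the composition map on $\mathbb{D}\times\mathbb{C}^\downarrow$ from \cite[Theorem 13.2.2]{Whitt2002}, and evaluate the resulting convergence at $y=1$, giving $\wt E_n(Z^{(2)}_{(k:n)}/b_2(n/k))\stackrel{P}{\to}\wt E(1)=\int_0^\infty\nu_0((x,\infty)\times(1,\infty))\,\mathrm dx$, which is (a). Part (b) is then immediate by writing $\wh{\MES}_{\text{emp},n}(k/n)/\MES(k/n)$ as the ratio of the left-hand side of (a) to $\tfrac{k}{n}\tfrac{b_0^{\leftarrow}(b_2(n/k))}{b_2(n/k)}\MES(k/n)$ and invoking \Cref{thm:MES} (with $p=k/n$), which identifies the denominator's limit as the same $\wt E(1)$, so the ratio tends to $1$.

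The main obstacle is the uniform-in-$y$ control of the near-origin contribution $\int_0^{1/M}$. This is where (B2) is indispensable, and the only real point of care is verifying that passing from the threshold $t$ to $t/2$ in the second coordinate does not spoil the bound supplied by (B2); everything else is a transcription of the MME argument with the lower integration limit shifted from $1$ to $0$.
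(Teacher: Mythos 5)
Your proposal is correct and follows essentially the same route as the paper: the paper proves this proposition by transcribing the MME argument (Lemma \ref{Lemma:empirical:MME} and Proposition \ref{prop:empirical:MME}) with the lower integration limit moved to $0$, using the truncated test functions on $[1/M,M]$ for the middle piece and invoking (B2) to kill the near-origin piece $\int_0^{1/M}$ uniformly in $y\ge 1/2$, exactly as you do. Your additional care in checking that the shift from threshold $t$ to $t/2$ in the second coordinate is harmless (via the substitution $x\mapsto 2x$ and the $\RV_{-\alpha_0}$ property of $\ov F_{\min(Z_1,Z_2)}$) is a detail the paper leaves implicit, and it is handled correctly.
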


The proof of the theorem is analogous to the proof of \Cref{prop:empirical:MME} based on the following  version of \Cref{Lemma:empirical:MME}.
Hence, we skip the details.

\begin{Lemma} \label{Lemma:empirical:MES}
Let the assumptions of \Cref{prop:empirical} hold. Define for $y>0$,
\begin{eqnarray*}
    E^{*}_n(y)&:=&\frac{b_0^{\leftarrow}(b_2(n/k))}{b_2(n/k)}\frac{1}{n}\sum_{i=1}^n Z_i^{(1)}\1_{\{Z_i^{(2)}>b_2(n/k)y\}},\\
    E^{*}(y)&:=&\int_0^\infty\nu_0((x,\infty)\times(y,\infty))\,\mathrm dx.
\end{eqnarray*}
Then $E^{*}(y)=y^{1-\alpha_0}E^{*}(1)$ and as $n\to\infty$,
\begin{eqnarray*}
(E^{*}_n(y))_{y\geq 1/2}\stackrel{{P}}{\to}(E^{*}(y))_{y\geq 1/2} \quad \mbox{ in }\mathbb{D}(\left[{1}/{2},\infty\right),(0,\infty)).
\end{eqnarray*}
\end{Lemma}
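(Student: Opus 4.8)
The plan is to mirror the five-step proof of \Cref{Lemma:empirical:MME}, the only essentially new feature being the control of the integrand near $x=0$. First I would record the representation
\[
E^{*}_n(y)=\int_0^\infty \nu_0^{(n)}\big((x,\infty)\times(y,\infty)\big)\,\mathrm dx,
\]
which follows by writing $Z_i^{(1)}=\int_0^\infty \1_{\{Z_i^{(1)}>x\}}\,\mathrm dx$, rescaling $x\mapsto x\,b_2(n/k)$, and using $b_0^{\la}(b_2(n/k))=1/\P(Z_1>b_2(n/k),Z_2>b_2(n/k))$. Compared with \Cref{Lemma:empirical:MME} the $x$-integral now runs down to $0$ rather than to $1$, so both tails of the integral must be tamed. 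The scaling identity $E^{*}(y)=y^{1-\alpha_0}E^{*}(1)$ is immediate from the substitution $x=yu$ together with the homogeneity $\nu_0(cA)=c^{-\alpha_0}\nu_0(A)$, while $0<E^{*}(1)<\infty$ is precisely the finiteness asserted in \Cref{thm:MES}.

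Next I would fix $M>1$ and split $\int_0^\infty=\int_0^{1/M}+\int_{1/M}^M+\int_M^\infty$. For the bulk term I apply a continuous mapping theorem along the point-measure convergence \eqref{C.2} to the bounded functional $m\mapsto\int_{\Esp_0}(\min(z_1,M)-1/M)_+\1_{\{z_2>y\}}\,m(\mathrm dz_1,\mathrm dz_2)$, which represents $\int_{1/M}^M \nu_0^{(n)}\big((x,\infty)\times(y,\infty)\big)\,\mathrm dx$, exactly as in Step~2 of \Cref{Lemma:empirical:MME}; this yields convergence in probability to $\int_{1/M}^M \nu_0\big((x,\infty)\times(y,\infty)\big)\,\mathrm dx$. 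For the upper tail the argument of Step~3 carries over verbatim: monotonicity in $y$ replaces $y$ by $1/2$, and after taking expectations the term becomes $\int_M^\infty \P(Z_1>xb_2(n/k),Z_2>b_2(n/k)/2)/\P(Z_1>b_2(n/k),Z_2>b_2(n/k))\,\mathrm dx$, which tends to $0$ as $n,M\to\infty$ by (B1).

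The genuinely new and hardest step is the lower tail $\int_0^{1/M}$, which is exactly why \Cref{thm:MES} invokes (B2) whereas \Cref{thm:MME_AI} did not. Writing $t:=b_2(n/k)$, bounding the supremum over $y\ge 1/2$ by the value at $y=1/2$, and taking expectations turns this term into $\int_0^{1/M}\P(Z_1>xt,Z_2>t/2)/\P(Z_1>t,Z_2>t)\,\mathrm dx$. The threshold mismatch in the second coordinate is the main obstacle; I would resolve it by factoring out $\P(Z_1>t/2,Z_2>t/2)/\P(Z_1>t,Z_2>t)=b_0^{\la}(t)/b_0^{\la}(t/2)\to 2^{\alpha_0}$ (valid since $b_0^{\la}=1/\P(\min(Z_1,Z_2)>\,\cdot\,)\in\RV_{\alpha_0}$) and then substituting $u=2x$, which recasts the remaining factor as $\tfrac12\int_0^{2/M}\P(Z_1>ut/2,Z_2>t/2)/\P(Z_1>t/2,Z_2>t/2)\,\mathrm du$, now exactly of the form controlled by (B2). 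Hence this contribution vanishes as $n\to\infty$ and then $M\to\infty$. A convergence-together argument then gives the pointwise limit $E^{*}_n(y)\cinP E^{*}(y)$ for each fixed $y\ge 1/2$.

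Finally I would upgrade from pointwise to uniform convergence on $[1/2,\infty)$ exactly as in Step~5 of \Cref{Lemma:empirical:MME}. Both $E^{*}_n$ and $E^{*}$ are non-increasing in $y$, and $E^{*}(y)=y^{1-\alpha_0}E^{*}(1)$ is continuous and (for $\alpha_0>1$) a decreasing bijection of $[1/2,\infty)$ onto $(0,E^{*}(1/2)]$. Discretising through the level sets $y_{m,k}:=(E^{*})^{-1}\!\big(E^{*}(1/2)\,k/m\big)$ and bounding $\sup_{y\ge 1/2}|E^{*}_n(y)-E^{*}(y)|$ by $E^{*}(1/2)/m$ plus the maximum of the finitely many pointwise errors yields $\sup_{y\ge1/2}|E^{*}_n(y)-E^{*}(y)|\cinP 0$, i.e. the asserted convergence in $\mathbb{D}([1/2,\infty),(0,\infty))$.
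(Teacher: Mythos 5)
Your proposal is correct and follows essentially the same route as the paper's proof: the paper establishes this lemma by rerunning the five-step argument of \Cref{Lemma:empirical:MME} with the bulk integral taken over $[1/M,M]$ and with Step 3 now controlling both $\int_0^{1/M}$ and $\int_M^{\infty}$ of $\nu_0^{(n)}((x,\infty)\times(y,\infty))$ in expectation, using (B2) for the lower tail and (B1) for the upper tail, exactly as you do. Your write-up is in fact slightly more explicit than the paper's at two points---deriving $E^{*}(y)=y^{1-\alpha_0}E^{*}(1)$ directly from the homogeneity of $\nu_0$ rather than via a double limit computation, and resolving the $Z_2>t/2$ versus $Z_2>t$ threshold mismatch through the regular-variation factoring $b_0^{\la}(t)/b_0^{\la}(t/2)\to 2^{\alpha_0}$ plus a change of variables---both of which are valid.
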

\begin{proof}
The only differences between the proofs of \Cref{Lemma:empirical:MME} and \Cref{Lemma:empirical:MES} are that
in the proof of \Cref{Lemma:empirical:MES}  we use $E_n^{*(M)}(y):=\int_{\frac{1}{M}}^M \nu_0^{(n)}((x,\infty)\times(y,\infty))\,\mathrm dx$
and that in Step~3, we have
$$\lim_{M\to\infty}\lim_{n\to\infty}\E\left(\sup_{y\geq\frac{1}{2}}\left[\int_0^{\frac{1}{M}}+\int_M^{\infty}\right]\nu_0^{(n)}((x,\infty)\times(y,\infty))\,\mathrm dx\right)=0$$
where Assumption (B2) has to be used.
\end{proof}

\subsection{Estimators for the MME and the MES based on extreme value theory } \label{subsec:EVT estimators}

In certain situations we might be interested in estimating $\MME(p)$ or $\MES(p)$ in a region
where no data are available. Since empirical estimators would not work in such a case we can resort to extrapolation via extreme value theory. We start with a
motivation for the definition of the estimator before we provide its'  asymptotic properties.
For the rest of this section we  make the following assumption.
\begin{assumptionletter} \label{assumption D} $\mbox{}$
$\ov F_{Z_2}\in\RV_{-\beta}$ for $\alpha\leq\beta\leq\alpha_0<\beta+1$.
\end{assumptionletter}
\Cref{assumption D} guarantees that $\lim_{t\to\infty}a(t)=0$ (see \cref{Remark:2}).
The idea here is that for all $p\geq k/n$, we estimate  $\MME(p)$ empirically since sufficient data are available in this region; on the other hand  for $p < k/n$ we will use an extrapolating extreme-value technique.
For notational convenience, define  the function $$a(t):=\frac{b_0^{\la}(b_2(t))}{t\,b_2(t)}.$$ Since
$b_0^\la\in\RV_{\alpha_0}$ and $b_2\in\RV_{1/\beta}$, we have $a\in\RV_{\frac{\alpha_0-\beta-1}{\beta}}$. Now,
let $k:=k(n)$ be a sequence of integers so that $k/n\to 0$ as $n\to\infty$. From \cref{thm:MME_AI} we already know that
\begin{eqnarray*}
    \lim\limits_{p\downarrow 0} a(1/p) \MME(p)  = \int_1^{\infty} \nu_0((x,\infty)\times(1,\infty))\;\mathrm dx
    =\lim\limits_{n\to\infty} a(n/k) \MME(k/n).
\end{eqnarray*}
Hence,
\begin{eqnarray} \label{MSE_As}
    \MME(p)\sim \frac{a(n/k)}{a(1/p)} \MME(k/n)
        \sim\left(\frac{k}{np}\right)^{\frac{\beta-\alpha_0+1}{\beta}}\MME(k/n) \quad (p\downarrow 0).
\end{eqnarray}
If we plug in the estimators $\wh \alpha_{0,n}$, $\wh\beta_n$ and $\wh \MME_n(k/n)$ for
$\alpha_0$, $\beta$ and $\MME(k/n)$ respectively in \eqref{MSE_As} we
obtain an estimator for $\MME(p)$ given by
\begin{eqnarray*}
    \wh \MME_n(p)=\left(\frac{k}{np}\right)^{\frac{\wh\beta_n-\wh\alpha_{0,n}+1}{\wh\beta_n}}\wh\MME_{\text{emp},n}(k/n).
\end{eqnarray*}
Similarly, we may obtain an estimator of $\MES(p)$ given by
\begin{eqnarray*}
    \wh \MES_n(p)=\left(\frac{k}{np}\right)^{\frac{\wh\beta_n-\wh\alpha_{0,n}+1}{\wh\beta_n}}\wh\MES_{\text{emp},n}(k/n).
\end{eqnarray*}
If $\beta>\alpha$ then the parameter $\alpha$, the index of regular variation of $Z_1$,
is surprisingly not necessary for the estimation of either Marginal Mean Excess  or Marginal Expected Shortfall.

\begin{Theorem}  \label{Theorem:6.3}
Let  Assumptions \ref{cond:basic}, (B1) and \ref{assumption D} hold. Furthermore, let $k=k(n)$
be a sequence of
integers satisfying $k\to\infty$, $k/n\to 0$ as $n\to\infty$. Moreover,
 $p_n\in(0,1)$ is a sequence of constants with
$p_n\downarrow 0$ and $np_n=o(k)$ as $n\to\infty$. Let $\wh \alpha_{0,n}$ and $\wh \beta_n$ be
estimators for $\alpha_0$ and $\beta$, respectively such that
\begin{eqnarray} \label{eq:6.8}
    \ln\left(\frac{k}{np_n}\right)\left(\wh\alpha_{0,n}-\alpha_0\right)\stackrel{P}{\to}0
    \quad \mbox{ and } \quad \ln\left(\frac{k}{np_n}\right)\left(\wh\beta_n-\beta\right)\stackrel{P}{\to}0 \quad (n\to\infty).
\end{eqnarray}
\begin{itemize}
\item[(a)] Then
$
{\displaystyle    \frac{\wh \MME_n(p_n)}{\MME(p_n)}\stackrel{P}{\to}1}
$ as $n\to\infty$.
\item[(b)] Additionally, if Assumption (B2) is satisfied then
$
{\displaystyle    \frac{\wh \MES_n(p_n)}{\MES(p_n)}\stackrel{P}{\to}1}
$ as $n\to\infty$.
\end{itemize}
\end{Theorem}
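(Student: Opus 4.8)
The plan is to prove (a) by factoring $\widehat\MME_n(p_n)/\MME(p_n)$ into three pieces, each tending to $1$, and then to obtain (b) by the same argument with the MES throughout. Write the true extrapolation exponent as $\theta:=(\beta-\alpha_0+1)/\beta$ and its plug-in version as $\widehat\theta_n:=(\widehat\beta_n-\widehat\alpha_{0,n}+1)/\widehat\beta_n$, and decompose
\begin{align*}
\frac{\widehat\MME_n(p_n)}{\MME(p_n)}=\Big(\tfrac{k}{np_n}\Big)^{\widehat\theta_n-\theta}\cdot\frac{\wh\MME_{\text{emp},n}(k/n)}{\MME(k/n)}\cdot\Big(\tfrac{k}{np_n}\Big)^{\theta}\frac{\MME(k/n)}{\MME(p_n)}=:(\mathrm I)\cdot(\mathrm{II})\cdot(\mathrm{III}).
\end{align*}
Factor $(\mathrm{II})$ is immediate: \Cref{assumption D} supplies $\ov F_{Z_2}\in\RV_{-\beta}$ with $\alpha\le\beta\le\alpha_0$, and $k\to\infty$, $k/n\to0$, so \cref{prop:empirical:MME} gives $(\mathrm{II})\stackrel{P}{\to}1$.

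Factor $(\mathrm I)$ is the heart of the argument and is where the $\ln(k/np_n)$-weighting in \eqref{eq:6.8} enters. Since $\widehat\theta_n-\theta=(\alpha_0-1)/\beta-(\widehat\alpha_{0,n}-1)/\widehat\beta_n$, writing $(\mathrm I)=\exp\{\ln(k/np_n)(\widehat\theta_n-\theta)\}$ and rearranging gives
\begin{align*}
\ln\Big(\tfrac{k}{np_n}\Big)(\widehat\theta_n-\theta)=\frac{(\alpha_0-1)\ln(k/np_n)(\widehat\beta_n-\beta)-\beta\ln(k/np_n)(\widehat\alpha_{0,n}-\alpha_0)}{\beta\widehat\beta_n}.
\end{align*}
Because $np_n=o(k)$ forces $\ln(k/np_n)\to\infty$, the second relation in \eqref{eq:6.8} already implies $\widehat\beta_n\stackrel{P}{\to}\beta>0$, so the denominator is bounded away from $0$ in probability, while both terms in the numerator vanish in probability by \eqref{eq:6.8}; hence the exponent tends to $0$ in probability and $(\mathrm I)\stackrel{P}{\to}1$. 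I expect this to be the main obstacle: the extrapolation base $k/np_n$ diverges and is raised to an \emph{estimated} power, so an error merely of order $1/\ln(k/np_n)$ in the exponent would already destroy consistency; this is exactly why the hypotheses are imposed in the sharp form \eqref{eq:6.8} rather than as plain consistency of $\widehat\alpha_{0,n},\widehat\beta_n$.

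Factor $(\mathrm{III})$ is deterministic. By \cref{thm:MME_AI}, $a(1/p)\MME(p)\to\int_1^\infty\nu_0((x,\infty)\times(1,\infty))\,\mathrm dx\in(0,\infty)$, so $a(1/p_n)\MME(p_n)$ and $a(n/k)\MME(k/n)$ share the same positive limit and $\MME(k/n)/\MME(p_n)\sim a(1/p_n)/a(n/k)$. Hence $(\mathrm{III})\sim(k/np_n)^{\theta}\,a(1/p_n)/a(n/k)$. Now $a\in\RV_{-\theta}$ with $-\theta<0$ under \Cref{assumption D}, and the argument ratio $(1/p_n)/(n/k)=k/np_n\to\infty$, so I would conclude $(\mathrm{III})\to1$ from the uniform convergence theorem for regularly varying functions, the divergence of the extrapolation ratio being controlled through $np_n=o(k)$. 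This deterministic cancellation of the slowly varying part of $a$ is the one genuinely analytic point; it is automatic when $a$ is ultimately a pure power (as in all the examples of \Cref{subsec:Models}) and otherwise needs the uniform estimate just described. Combining the three factors yields $\widehat\MME_n(p_n)/\MME(p_n)\stackrel{P}{\to}1$.

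For (b) I would run the identical three-factor decomposition with $\MES$ and $\wh\MES_{\text{emp},n}$ replacing $\MME$ and $\wh\MME_{\text{emp},n}$. Factor $(\mathrm I)$ is literally unchanged, and factor $(\mathrm{III})$ is treated as above once \cref{thm:MES} furnishes the limit $a(1/p)\MES(p)\to\int_0^\infty\nu_0((x,\infty)\times(1,\infty))\,\mathrm dx\in(0,\infty)$; factor $(\mathrm{II})$ now uses \cref{prop:empirical}. Both \cref{thm:MES} and \cref{prop:empirical} require the extra integrability condition (B2), which is precisely the additional hypothesis stipulated in part (b), so the proof goes through verbatim otherwise.
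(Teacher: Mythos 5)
Your proposal is correct and follows essentially the same route as the paper: your factors $(\mathrm I)$, $(\mathrm{II})$, $(\mathrm{III})$ are precisely the paper's $I_4$, $I_1$, and $I_2\cdot I_3$ (merged), handled with the same ingredients --- the log-weighted consistency \eqref{eq:6.8} for the estimated exponent, \Cref{prop:empirical:MME} (resp.\ \Cref{prop:empirical}) for the empirical ratio, and \Cref{thm:MME_AI} (resp.\ \Cref{thm:MES}) together with $a\in\RV_{(\alpha_0-\beta-1)/\beta}$ for the deterministic extrapolation factor. The step you flag as the ``genuinely analytic point,'' namely the cancellation of the slowly varying part of $a$ along the diverging ratio $k/(np_n)$, is exactly the paper's $I_3(n)\to 1$, which the paper asserts with no more justification than you give (it only cites the regular variation of $a$, $k/n\to 0$ and $np_n=o(k)$), so your treatment is, if anything, more explicit about where the argument leans on this.
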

\begin{proof}
(a) \, Rewrite
\begin{eqnarray*}
    \frac{\wh \MME_n(p_n)}{\MME(p_n)}&=&\frac{\left(\frac{k}{np_n}\right)^{\frac{\wh\beta_n-\wh\alpha_{0,n}+1}{\wh\beta_n}}\wh\MME_{\text{emp},n}(k/n)}{\MME(p_n)}\\
       &=&\frac{\wh\MME_{\text{emp},n}(k/n)}{\MME(k/n)}\frac{a(n/k)\MME(k/n)}{a(1/p_n)\MME(p_n)}
       \frac{\frac{a(1/p_n)}{a(n/k)}}{\left(\frac{np_n}{k}\right)^{\frac{\beta-\alpha_0+1}{\beta}}}\frac{\left(\frac{k}{np_n}\right)^{\frac{\wh\beta_n-\wh\alpha_{0,n}+1}{\wh\beta_n}}}
       {\left(\frac{k}{np_n}\right)^{\frac{\beta-\alpha_0+1}{\beta}}}\\
       &=:&I_1(n)\cdot I_2(n)\cdot I_3(n)\cdot I_4(n).
\end{eqnarray*}
An application of \Cref{prop:empirical:MME} implies $$I_1(n)=\frac{\wh\MME_{\text{emp},n}(k/n)}{\MME(k/n)}\stackrel{P}{\to}1$$ as $n\to\infty$.
For the second term $I_2(n)$, using \cref{thm:MME_AI} we get
\begin{eqnarray*}
    I_2(n)=\frac{\frac{\frac{k}{n}b_0^{\la}(b_2(n/k))}{b_2(n/k)}\MME(k/n)}{\frac{p_nb_0^{\la}(b_2(1/p_n))}{b_2(1/p_n)}\MME(p_n)}
    \stackrel{P}{\to}1 \quad (n\to \infty).
\end{eqnarray*}
Since $a\in\RV_{(\alpha_0-\beta-1)/\beta}$, $k/n\to 0$ and $np_n = o(k)$, we obtain $\lim_{n\to\infty}I_3(n)=1$ as well.
For the last term $I_4(n)$ we use the representation
\begin{eqnarray*}
    \frac{\left(\frac{k}{np_n}\right)^{\frac{\wh\beta_n-\wh\alpha_{0,n}+1}{\wh\beta_n}}}{\left(\frac{k}{np_n}\right)^{\frac{\beta-\alpha_0+1}{\beta}}}
    =\exp\left(\left(\frac{1-\wh\alpha_{0,n}}{\wh\beta_n}-\frac{1-\alpha_0}{\beta}\right)\ln\left(\frac{k}{np_n}\right)\right),
\end{eqnarray*}
and
\begin{eqnarray*}
        \frac{1-\wh\alpha_{0,n}}{\wh\beta_n}-\frac{1-\alpha_{0}}{\beta}=(\beta-\wh\beta_n)\frac{1-\wh\alpha_{0,n}}{\wh\beta_n\beta}+(\alpha_0-\wh\alpha_{0,n})\frac{1}{\beta}.
\end{eqnarray*}
Since by assumption \eqref{eq:6.8} we have $\wh \alpha_{0,n}\stackrel{P}{\to}\alpha$,  $\wh \beta_{n}\stackrel{P}{\to}\beta$, using a continuous mapping theorem we get,
\begin{eqnarray*}
    \ln \left(\frac{k}{np_n}\right) (\beta-\wh\beta_n)\frac{1-\wh\alpha_{0,n}}{\wh\beta_n\beta}+\ln \left(\frac{k}{np_n}\right)(\alpha_0-\wh\alpha_{0,n})\frac{1}{\beta}\stackrel{P}{\to}0.
\end{eqnarray*}
Hence, we conclude
that $I_{4}(n)\stackrel{P}{\to}1$ as $n\to\infty$ which completes the proof.\\
(b) This proof is analogous to (a) and hence is omitted here.
\end{proof}

\section{Simulation study}\label{sec:simulation}

In this section, we study the developed estimators for different models. We simulate from  models described in Section~\ref{sec:prelim} and Section~\ref{sec:asymptotic}, estimate MME and MES values from the data and compare them with the actual values from the model. We also compare our estimator with  a regular empirical estimator and observe that our estimator provides a smaller variance in most simulated examples. Moreover our estimator is scalable to smaller $p<1/n$ where $n$ is the sample size, which is infeasible for the empirical estimator.
\subsection{Estimators and assumption checks}
As an estimator of $\beta$, the index of regular variation of $Z_{2}$ we use the Hill-estimator based on the data $Z_1^{(2)},\ldots,Z_n^{(2)}$ whose order statistics is given by $Z_{(1:n)}^{(2)} \ge \ldots \ge  Z_{(n:n)}^{(2)}$. The estimator is
\begin{eqnarray*}
    \wh\beta_n=\frac{1}{k_2}\sum_{i=1}^{k_2} [\ln( Z^{(2)}_{(i:n)})-\ln( Z^{(2)}_{(k_1:n)})]
\end{eqnarray*}
for some $k_2:=k_2(n)\in\{1,\ldots,n\}$.
Similarly, we use as estimator for $\alpha_0$, the index of hidden regular variation, the Hill-estimator based on the data $\min(Z_1^{(1)},Z_1^{(2)}),\ldots,\min(Z_n^{(1)},Z_n^{(2)})$.
Therefore, define $Z^{\min}_i=\min(Z_i^{(1)},Z_i^{(2)})$ for $i\in\N$. The order statistics of $Z^{\min}_1,\ldots,Z^{\min}_n$
are denoted by $Z^{\min}_{(1:n)}\geq\ldots\geq Z^{\min}_{(n:n)}$. The Hill-estimator for $\alpha_0$ is then
\begin{eqnarray*}
    \wh\alpha_{0,n}=\frac{1}{k_0}\sum_{i=1}^{k_0} [\ln (Z^{\min}_{(i:n)})-\ln( Z^{\min}_{(k_2:n)})]
\end{eqnarray*}
for some $k_0:=k_0(n)\in\{1,\ldots,n\}$.

\begin{Corollary} \label{Corollary:Hill}
Let  Assumptions \ref{cond:basic} and \ref{assumption D} hold. Furthermore, suppose
the following conditions are satisfied:
\begin{enumerate}
    \item $\min(k,k_0,k_2)\to\infty$, $\max(k,k_0,k_2)/n\to 0$ as $n\to\infty$.
    \item  $p_n\in(0,1)$ such that $p_n\downarrow 0$, $np_n=o(k)$ and $\ln( k /(np_n))=o(\min(\sqrt{k_0},\sqrt{k_2}))$ as $n\to\infty$.
    \item  
    The second order conditions
\begin{eqnarray*}
    \lim_{t\to\infty}\frac{\frac{b_0(tx)}{b_0(t)}-x^{1/\alpha_0}}{A_0(t)}=x^{1/\alpha_0}\frac{x^{\rho_0}-1}{\rho_0}
    \quad \text{ and } \quad
    \lim_{t\to\infty}\frac{\frac{b_2(tx)}{b_2(t)}-x^{1/\beta}}{A_2(t)}=x^{1/\beta}\frac{x^{\rho_2}-1}{\rho_2}, \quad x>0,
\end{eqnarray*}
 where $\rho_0,\rho_2\leq 0$ are constants and $A_0,A_2$ are positive or negative
functions hold.
    \item $\lim_{t\to\infty}A_0(t)=\lim_{t\to\infty}A_2(t)=0$.
    \item There exist finite constants $\lambda_0,\lambda_2$ such that
    \begin{eqnarray*}
        \lim_{n\to\infty}\sqrt{k_0}A_0\left(\frac{n}{k_0}\right)=\lambda_0
            \quad \mbox{ and }\quad
        \lim_{n\to\infty}\sqrt{k_2}A_2\left(\frac{n}{k_2}\right)=\lambda_2.
    \end{eqnarray*}
\end{enumerate}
Then \eqref{eq:6.8} is satisfied.
\end{Corollary}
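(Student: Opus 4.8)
The plan is to reduce \eqref{eq:6.8} to the classical asymptotic normality of the Hill estimator, applied \emph{separately} to the two univariate samples $Z_1^{(2)},\ldots,Z_n^{(2)}$ and $Z^{\min}_1,\ldots,Z^{\min}_n$, and then to absorb the surviving $\ln$-factor using the gap built into condition~(2). First I would record that, since the pairs $(Z_i^{(1)},Z_i^{(2)})$ are iid, each of the derived sequences $(Z_i^{(2)})_i$ and $(Z^{\min}_i)_i=(\min(Z_i^{(1)},Z_i^{(2)}))_i$ is an iid \emph{univariate} sample; hence the behaviour of $\wh\beta_n$ and $\wh\alpha_{0,n}$ is governed entirely by the respective marginal tails, and the standard one-dimensional Hill theory applies to each without modification. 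By Assumption~\ref{cond:basic}(A5) the tail quantile function of $\min(Z_1,Z_2)$ is $b_0\in\RV_{1/\alpha_0}$, so $\ov F_{\min(Z_1,Z_2)}\in\RV_{-\alpha_0}$, while $\ov F_{Z_2}\in\RV_{-\beta}$ has tail quantile function $b_2\in\RV_{1/\beta}$.

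Next I would observe that the two limits in condition~(3) are precisely the standard second-order regular variation conditions on the tail quantile functions $b_0$ and $b_2$, with second-order parameters $\rho_0,\rho_2\le0$ and auxiliary functions $A_0,A_2$ tending to $0$ by condition~(4). These are exactly the hypotheses under which the Hill estimator is asymptotically normal (see, e.g., \cite{Resnick:2007}). Combining them with the intermediate-sequence requirements $k_0,k_2\to\infty$ and $k_0/n,k_2/n\to0$ from condition~(1), and the bias-control conditions $\sqrt{k_0}A_0(n/k_0)\to\lambda_0$ and $\sqrt{k_2}A_2(n/k_2)\to\lambda_2$ from condition~(5), the Hill central limit theorem yields, after the usual inversion of the estimated extreme-value index (to which the delta method applies without altering the $\sqrt{k}$-rate),
\begin{align*}
\sqrt{k_0}\,(\wh\alpha_{0,n}-\alpha_0)\weak N_0 \qquad\text{and}\qquad \sqrt{k_2}\,(\wh\beta_n-\beta)\weak N_2 \qquad (n\to\infty)
\end{align*}
for suitable (possibly non-centred) Gaussian limits $N_0,N_2$. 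In particular, both $\sqrt{k_0}(\wh\alpha_{0,n}-\alpha_0)$ and $\sqrt{k_2}(\wh\beta_n-\beta)$ are $O_P(1)$.

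Finally I would carry out the rate bookkeeping. Writing
\begin{align*}
\ln\!\left(\frac{k}{np_n}\right)(\wh\alpha_{0,n}-\alpha_0)=\frac{\ln\!\big(k/(np_n)\big)}{\sqrt{k_0}}\cdot\sqrt{k_0}\,(\wh\alpha_{0,n}-\alpha_0),
\end{align*}
condition~(2), namely $\ln(k/(np_n))=o(\sqrt{k_0})$, forces the deterministic prefactor to $0$; multiplied by the $O_P(1)$ factor this product is $o_P(1)$. The identical argument with $k_2$ in place of $k_0$, using $\ln(k/(np_n))=o(\sqrt{k_2})$, handles the $\wh\beta_n$ term, and together these give both statements in \eqref{eq:6.8}.

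The genuinely routine part is this last rate computation; the only substantive point to verify is that the second-order conditions in~(3) are imposed on the correct objects, that is, $b_0$ for the $\min$-sample and $b_2$ for the $Z_2$-sample, and that the derived samples are truly iid so that the univariate Hill central limit theorem applies verbatim. Once this alignment with the hypotheses of the Hill theorem is checked, the conclusion follows immediately from the $\sqrt{k_j}$-tightness of the estimators and the logarithmic gap guaranteed by condition~(2).
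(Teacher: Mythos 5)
Your proof is correct and takes essentially the same route as the paper: the paper likewise invokes the asymptotic normality of the Hill estimator under the stated second-order, intermediate-sequence, and bias conditions (citing de Haan--Ferreira, Theorem 3.2.5) to get $\sqrt{k_i}$-tightness, and then writes $\ln\left(\frac{k}{np_n}\right)(\wh\alpha_{0,n}-\alpha_0)=\frac{\ln\left(k/(np_n)\right)}{\sqrt{k_0}}\cdot\sqrt{k_0}(\wh\alpha_{0,n}-\alpha_0)\stackrel{P}{\to}0$ exactly as you do. Your additional remark about the delta-method inversion of the estimated extreme-value index is a minor refinement the paper leaves implicit.
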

\begin{proof}
From \cite[Theorem~3.2.5]{deHaanFerreirabook}  we know that $\sqrt{k_2}(\beta-\wh\beta_n)\stackrel{\mathcal{D}}{\to}\mathcal{N}$ as $n\to \infty$ where $\mathcal{N}$
is a normally distributed random variable. In particular, $\wh\beta_n\stackrel{P}{\to}\beta$ as $n\to \infty$. The analogous result
holds for $\wh\alpha_{0,n}$ as well. Since by assumption $\ln ({k}/{np_n})=o(\sqrt{k_i})$ ($i=0,2$), we obtain
\begin{eqnarray*}
    \sqrt{k_0}(\wh\alpha_{0,n}-\alpha_0)\frac{\ln \left(\frac{k}{np_n}\right)}{\sqrt{k_0}}\stackrel{P}{\to}0
    \quad \mbox{ and }\quad \sqrt{k_2}(\wh\beta_n-\beta)\frac{\ln \left(\frac{k}{np_n}\right)}{\sqrt{k_2}}\stackrel{P}{\to}0, \quad (n\to \infty)
\end{eqnarray*}
which is  condition \eqref{eq:6.8}.
\end{proof}

\begin{Remark}
In our simulation study  in Section \ref{subsec:simex} we take $k=k_1=k_2$. In the study of extreme values, the choice of $k$  plays an important role and much work goes on in this area; see \cite{scarrott:macdonald:2012} for a brief overview. We choose $k$ to be 10\% of $n$, which is an ad-hoc choice but often used in practice.
\end{Remark}

\begin{Remark}
An alternative to the Hill estimator is the probability weighted moment estimator
based on the block maxima method which is under some regularity condition consistent
and asymptotically normally distributed as presented  in \cite[Theorem 2.3]{Ferreira:deHaan}
and hence, satisfies \eqref{eq:6.8}. Moreover, the peaks-over-threshold (POT) method
is a further option to estimate $\alpha_0,\beta$ which satisfies as well under some
regularity conditions \eqref{eq:6.8}; for more details on the asymptotic behavior
of estimators based on the POT method see \cite{Smith:87}.
\end{Remark}

\subsection{Simulated Examples} \label{subsec:simex}

First we use our methods on a few simulated examples.

\begin{Example}[Gaussian copula]\label{simex:exGauss}

Suppose $(Z_1,Z_2)$  has identical Pareto marginal distributions with common parameter $\alpha>0$  and  a dependence structure given by a  Gaussian copula
$ C_{\Phi,\rho}(u,v)$ with $\rho\in(-1,1)$ as given in Example \ref{example:gausscop}. A further restriction from the same example  leads us 
to assume $\rho\in(1-\frac{2}{\alpha+1},1)$ so that $\lim_{p\to 0}\MME(p)=\infty$.

In the Gaussian copula model, we can numerically compute the value of $\MME(p)$ for any specific $0<p<1$.  In our study we generate the above  distribution for four sets of choices of parameters:
\begin{itemize}
\item[(a)] $\alpha=2, \rho =0.9 $. Hence $\alpha_0=2.1$.
\item[(b)]  $\alpha=2, \rho =0.5$. Hence $\alpha_0=2.67$.
\item[(c)] $\alpha=2.3, \rho =0.8$. Hence $\alpha_0=2.55$.
\item[(d)]  $\alpha=1.9, \rho =0.8$. Hence $\alpha_0=2.11$.
 \end{itemize}

  \begin{figure}[H]
  \begin{centering}
  \includegraphics[width=1.02\linewidth]{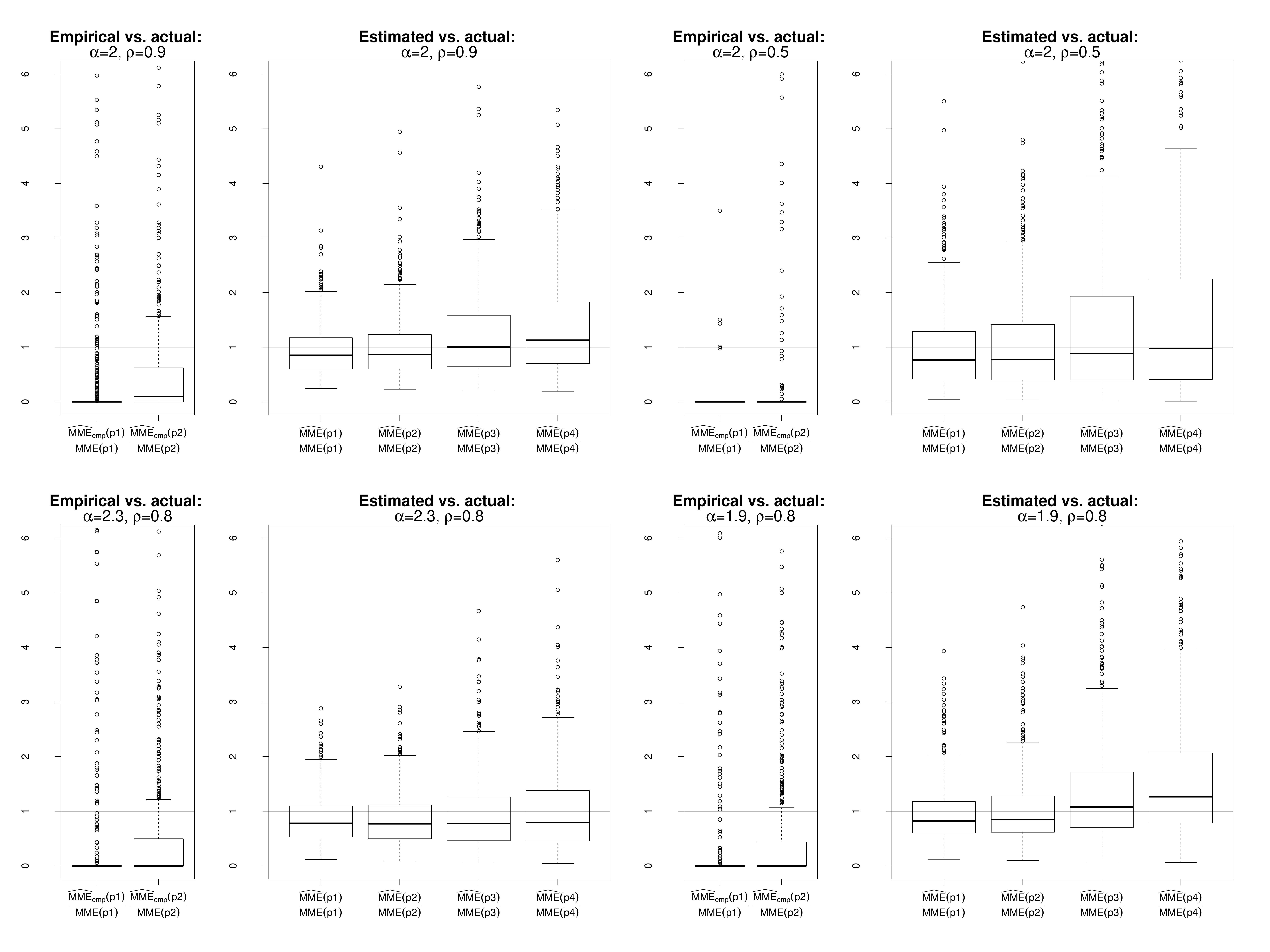}
  \end{centering}
  \caption{Box plots of  $\wh\MME_{\text{emp}}(p)/\MME(p)$ with $p1=1/500, p2=1/1000$ and  of $\wh\MME(p)/\MME(p)$ with $p1=1/500, p2=1/1000, p3=1/5000, p4=1/10000$ for Example \ref{simex:exGauss} with Gaussian copula: (a) top left: $\alpha=2, \rho=0.9$ and $\alpha_0=2.1$; (b) top right: $\alpha=2, \rho=0.5$ and $\alpha_0=2.67$; (c) bottom left: $\alpha=2.3, \rho=0.8$ and $\alpha_0=2.55$, (d) bottom right: $\alpha=1.9, \rho=0.8$ and $\alpha_0=2.11$.} \label{plot:gauss:box}  \end{figure}

The parameters $\alpha$ and $\alpha_{0}$ are estimated using the Hill estimator which appears to estimate the parameters quite well; see \cite{Resnick:2007} for details. The estimated values $\wh \alpha$ and $\alpha_{0}$ are used to
compute estimated values of MME.

In order to check the performance of the estimator when $p \ll 1/n$ we create box-plots for ${\widehat{\MME}}/{\MME}$ from 500 samples in each of the four models,
 where $n=1000,k=100$ and we restrict to 4 values of $p$ given by $1/500, 1/1000,1/5000,1/10000$. The plot is given in Figure \ref{plot:gauss:box}. Overall the ratio of the estimate to its real target value seem close to one, and we conclude that the estimators are reasonably good.

\end{Example}

\begin{Example}[Marshall-Olkin copula]\label{simex:exMarOlk}

Suppose $(Z_1,Z_2)$  has identical Pareto marginal distributions with parameter $\alpha>0$  and  a dependence structure given by a    \emph{Marshall-Olkin survival copula} with parameters $\gamma_1,\gamma_2\in(0,1)$ as given in Example \ref{example:marolcop}.

  \begin{figure}[h]
  \begin{centering}
  \includegraphics[width=1.03\linewidth]{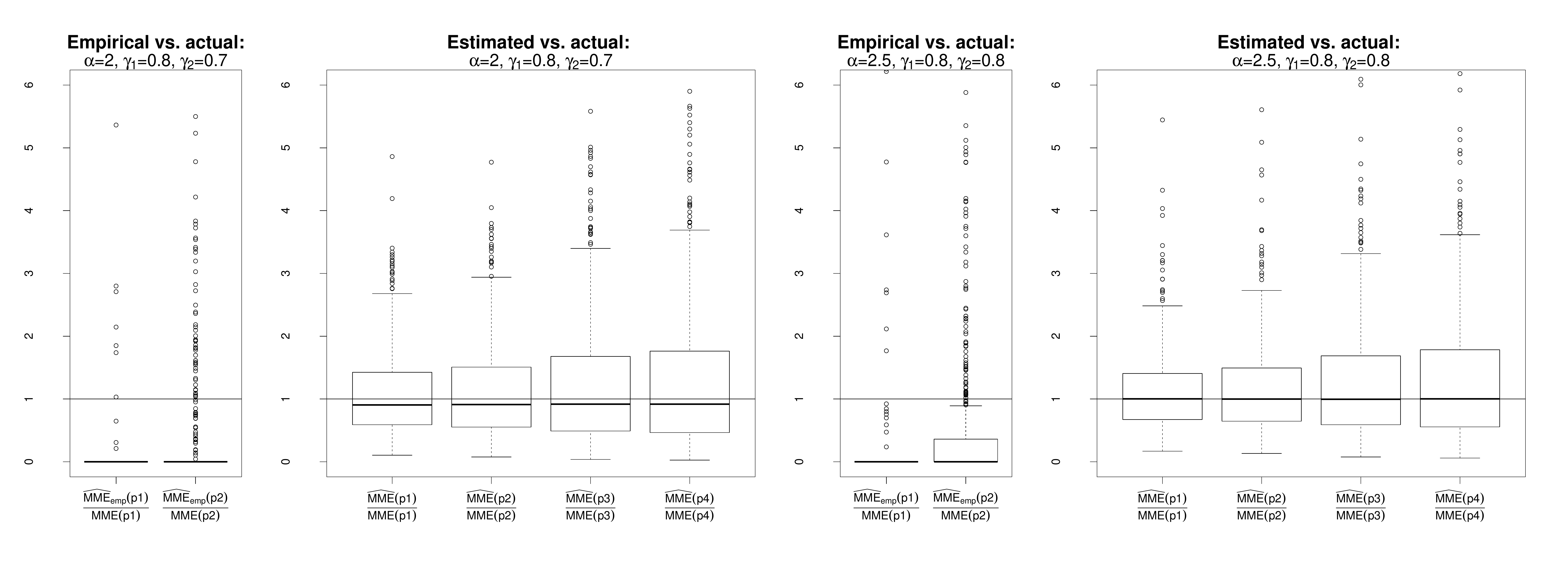}
  \end{centering}
  \caption{Box plots of  $\wh\MME_{\text{emp}}(p)/\MME(p)$ with $p1=1/500, p2= 1/1000$ and $\wh\MME(p)/\MME(p)$ with $p1=1/500, p2=1/1000, p3=1/5000, p4=1/10000$ for Example \ref{simex:exMarOlk} with Marshall-Olkin copula: (a) left two plots: $\alpha=2, \gamma_1=0.8,\gamma_2=0.7$ and $\alpha_0=2.6$; {(b) }right two plots: $\alpha=2.5, \gamma_1=0.8,\gamma_2=0.8$ and $\alpha_0=3$.}  \label{plot:MO:box} \end{figure}
We note that a parameter restriction from Example \ref{example:marolcop} is given by $\min(\gamma_1,\gamma_2)\in(1-1/\alpha,1)$.  Hence, we find estimates of $\MME$ for the $\gamma_1\ge\gamma_2$ case but not for $\MES$ in this example.
 For  $\gamma_1\ge\gamma_2$, we can explicitly compute
\begin{align*}
\MME(p)  = \frac1{\alpha-1} {p^{1-\gamma_2-1/{\alpha} }}.  \end{align*}

In our study we generate the above  distribution for two sets of choice of parameters:
\begin{itemize}
\item[(a)] $\alpha=2, \gamma_1=0.8, \gamma_2=0.7$. Hence $\alpha_0=2.6$.
\item[(b)] $\alpha=2.5, \gamma_1=0.8, \gamma_2=0.8$. Hence $\alpha_0=3$.
\end{itemize}

In  \Cref{plot:MO:box}, we create box-plots for ${\widehat{\MME}}/{\MME}$ from 500 samples in each of the four models,
 where $n=1000,k=100$ and we restrict to 4 values of $p$ given by $1/500, 1/1000,1/5000,1/10000$.  Again we observe that the ratio of the estimate to its real target value seem to be close to one, and we conclude that the estimators are reasonably good.

\end{Example}

\begin{figure}[H]
  \includegraphics[width=1.01\linewidth]{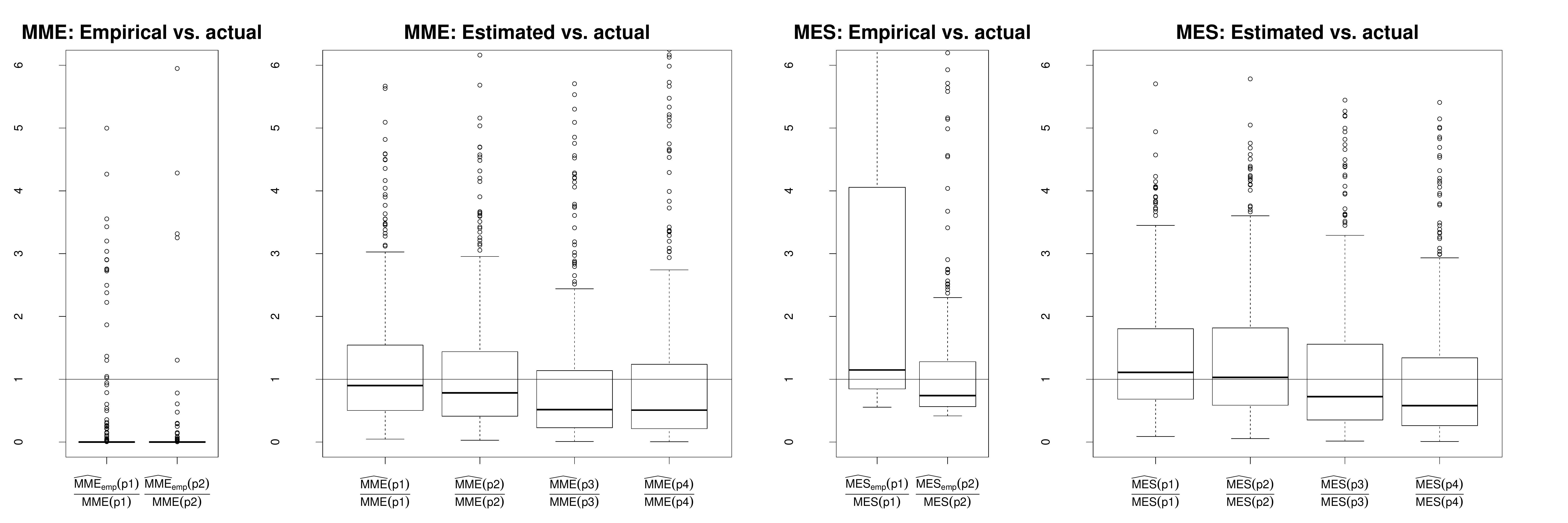}
  \caption{(a) Left two plots: Box plots of  $\wh\MME_{\text{emp}}(p)/\MME(p)$  with $p1=1/500, p2=1/1000$ as well as $\wh\MME(p)/\MME(p)$  with $p1=1/500, p2=1/1000, p3=1/5000, p4=1/10000$ for Model C in \Cref{simex:exModelC} with $\alpha=1.5, \alpha_0=2$. (b) Right two plots: Analog plots for $\MES$.}  \label{plot:modelC:box}  \end{figure}

\begin{Example}[Model C]\label{simex:exModelC}

We look at \Cref{additive:model} where $ \bY=(Y_1,Y_2)$ and $Y_1,Y_2$ are iid Pareto $(\alpha)$ random variables, $\bV=(V_1,V_2)$ with $V_1=V_2$ following Pareto $(\alpha_0)$ and $\bZ=\bY+\bV$.  Using \Cref{theorem:5.3} we can check that $\bZ\in \MRV(\alpha,b,\nu) \cap \HRV(\alpha_0,b_0,\nu_0)$ if $\alpha<\alpha_0<\alpha+1$ and all conditions (A), (B1) and (B2) are satisfied. Thus, we can find limits for both $\MME(p)$ and $\MES(p)$ for $p$ going to 0. It is also possible to calculate MME and MES explicitly. We do so for $\alpha=1.5$ and $\alpha_0=2$ here.

We found that the Hill plots were not that stable, hence we used an L-moment estimator (a probability weighted moment estimator could be used as well) to estimate $\alpha$ and $\alpha_0$; see \cite{hosking:1990,deHaanFerreirabook} for details.
The estimates of the tail parameters are not shown here. In Figure \ref{plot:modelC:box}, we create box-plots for ${\widehat{\MME}}/{\MME}$ and ${\widehat{\MES}}/{\MES}$
 where $n=1000,k=100$ with 500 samples and we restrict to 4 values of $p$ given by $1/500, 1/1000,1/5000,1/10000$. The ratios of the estimators and the targets seem close to one. Of course, the empirical estimators for $p=1/500, p=1/1000$ do not perform so well.

\end{Example}

\subsection{Data Example: Returns from Netflix and S\&P}  \label{subsec:realex}

In this section we use the method we developed in order to estimate MME and MES from  a real data set. We observe a data set which exhibits \emph{asymptotic tail independence} and we compare estimates of both statistics (MME and MES) under this assumption versus a case when we use a formula that does not assume asymptotic independence (similar to estimates obtained in  \cite{cai:einmahl:dehaan:zhou:2015}).

 \begin{figure}[h]
  \begin{centering}
  \includegraphics[width=0.80\linewidth]{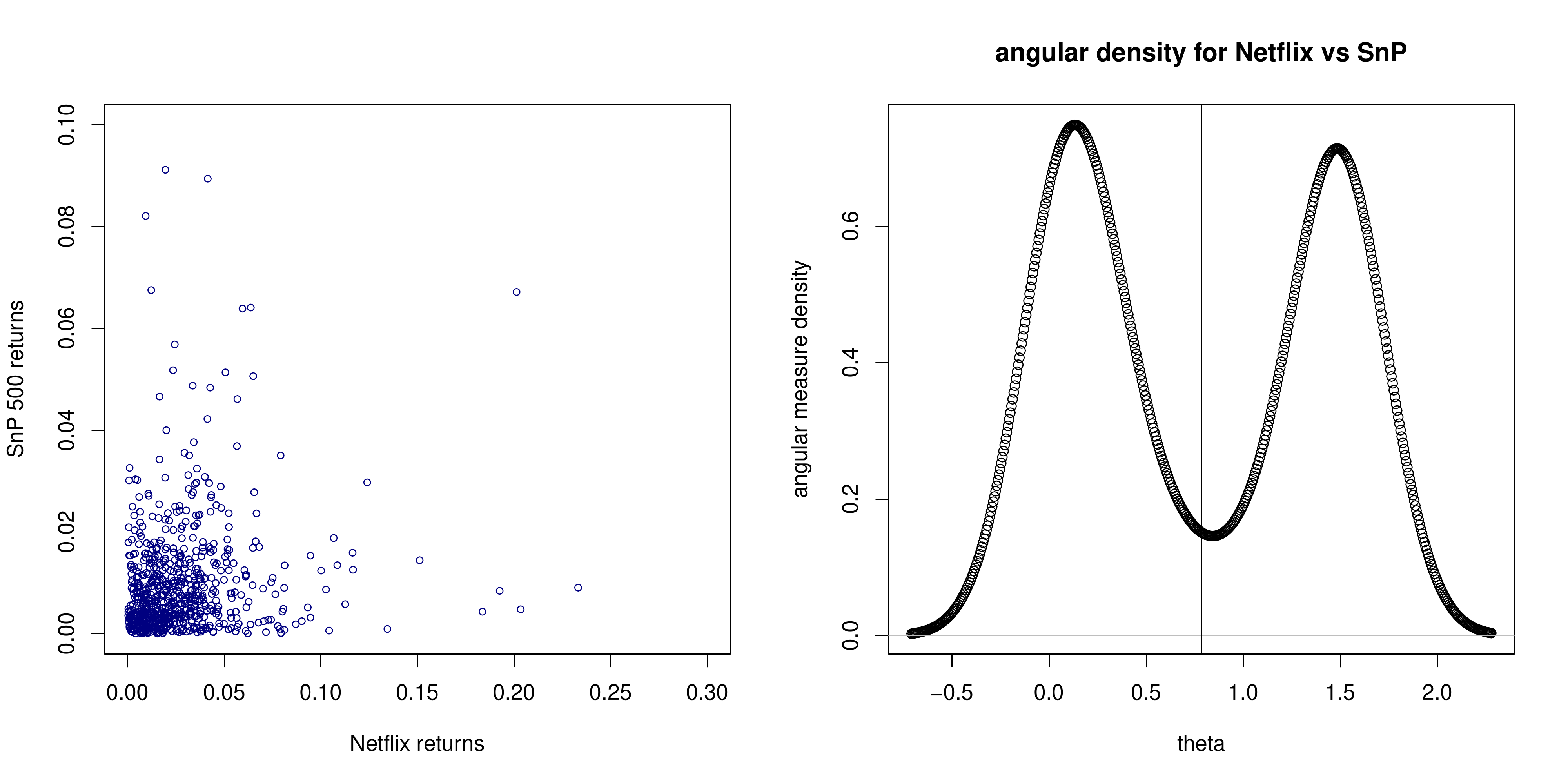}
  \end{centering}
  \caption{Left plot: Scatter plot of (NFLX, SNP). Right plot: angular density plot to of the rank-transformed returns data. }  \label{plot:Netflix:A}
  \end{figure}

We observe return values from daily equity prices of Netflix (NASDAQ:NFLX) as well as daily return values from S\&P 500 index for the period January 1, 2004 to December 31, 2013. The data was downloaded from \emph{Yahoo Finance} (\url{http://finance.yahoo.com/}). The entire data set uses 2517 trading days out of which 687 days exhibited negative returns in both components and we used these 687 data points for our study.

  \begin{figure}[h]
  \begin{centering}
  \includegraphics[width=\linewidth]{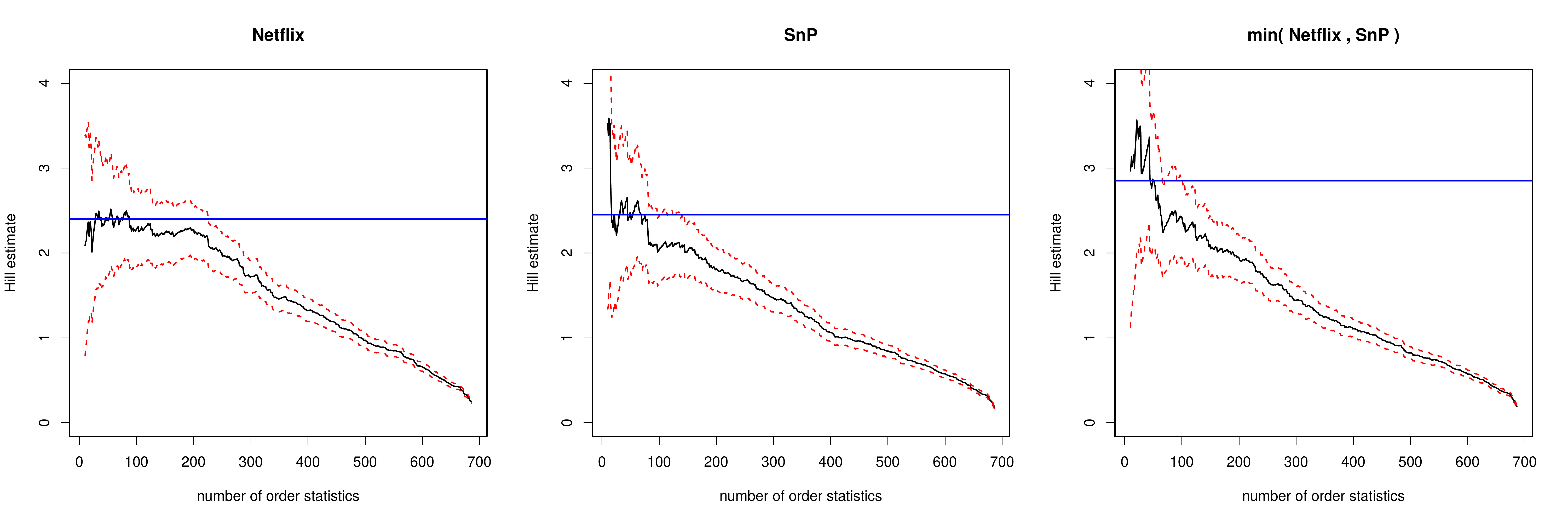}
  \end{centering}
  \caption{Hill plots of the tail parameters of the two negative returns (NFLX,SNP) and that of hidden tail parameter $\alpha_{0}$ estimated using $\min$(NFLX,SNP).}  \label{plot:Netflix:B}
  \end{figure}

A scatter plot of the returns data shows some concentration around the axes but the data seems to exhibit some positive dependence of the variables too; see leftmost plot in \Cref{plot:Netflix:A}.  Since the scatterplot doesn't clearly show whether the data has asymptotic tail independence or not, we create an angular density plot of the rank-transformed data. Under asymptotic independence we should observe two peaks in the density, one concentrating around 0 and the other around $\pi/2$, which is what we see in the right plot in \Cref{plot:Netflix:A}; see \cite{Resnick:2007} for further discussion on the angular density.  Hence, we can discern that our data exhibits asymptotic tail independence and proceed to compute the hidden regular variation tail parameter using $\min$(NFLX, SNP) as the data used to get a Hill estimate of $\alpha_{0}$.
The left two plots in \Cref{plot:Netflix:B} show Hill plots of both the Netflix negative returns  (NFLX) and the S\&P 500 negative returns (SNP). A QQ plot (not shown) suggests that both margins are heavy-tailed and by choosing $k=50$ for the Hill-estimator we obtain as estimate of the tail parameters $\wh \alpha_{\text{NFLX}}=2.39, \wh \alpha_{\text{SNP}}=2.46$ (indicated by blue horizontal lines in the plot). Again using a Hill-estimator with $k=50$, the estimate $\wh \alpha_{0} =2.86$ is obtained; see the rightmost plot in  \Cref{plot:Netflix:B}.

Now, we use the values of  $\wh \alpha_{\text{SNP}}=2.46$ and $\wh \alpha_{0} =2.86$ to compute estimated values of MME and MES. In \Cref{plot:Netflix:C} we plot the empirical estimates of MME and MES (dotted lines), the extreme value estimate without assuming asymptotic independence (blue bold line) and the extreme value estimate assuming asymptotic independence (black bold line). We observe that both MME and MES values are smaller under the assumption of asymptotic independence than in the case where we do not assume asymptotic independence.
Hence, without an assumption of asymptotic independence, the firm might over-estimate its' capital shortfall if the systemic returns tend to show an extreme loss.
  \begin{figure}[h]
  \begin{centering}
  \includegraphics[width=0.8\linewidth]{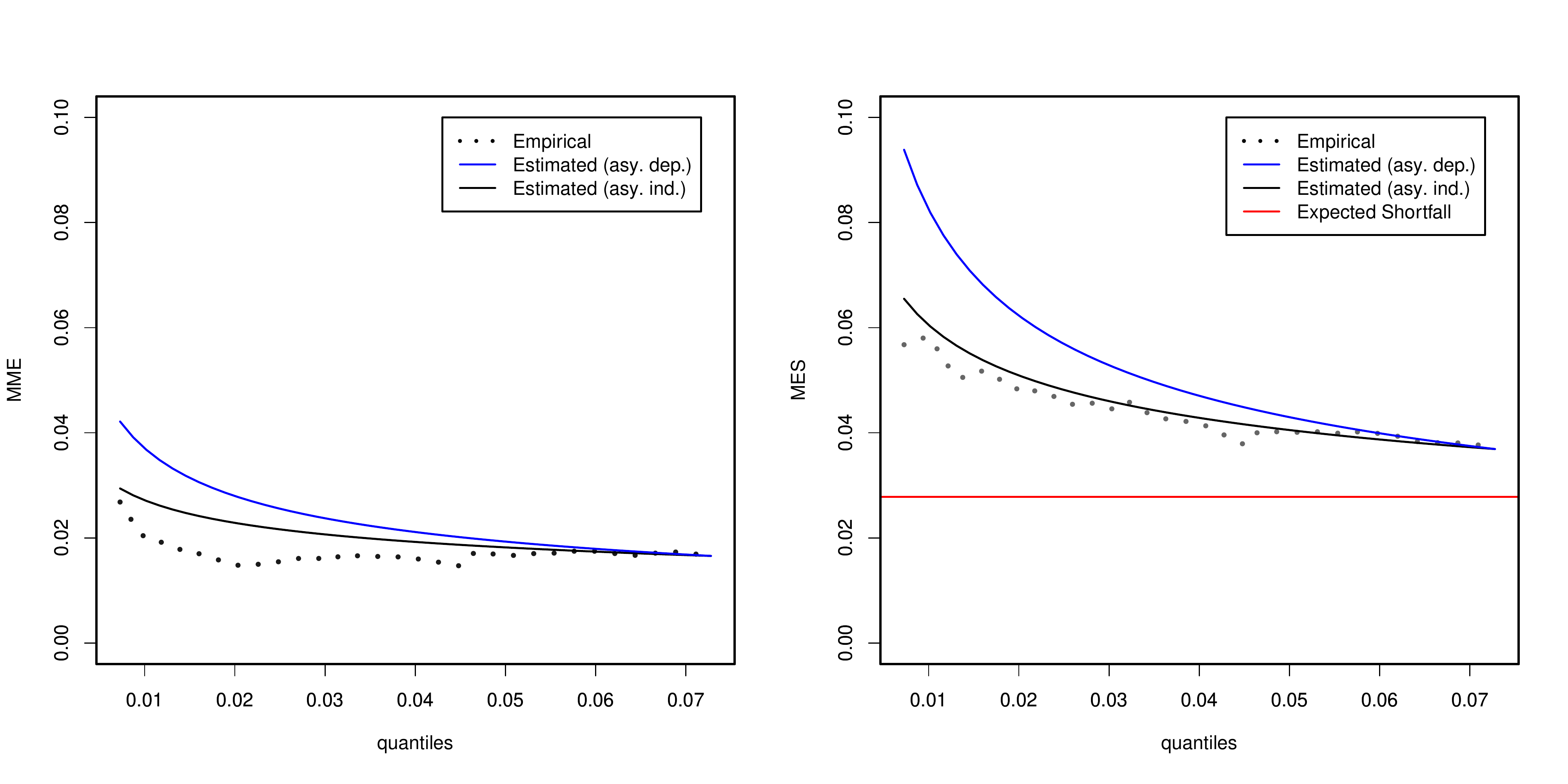}
  \end{centering}
  \caption{MME and MES plots under the tail dependence model as well as the asymptotic independent model.}  \label{plot:Netflix:C}
  \end{figure}

\section{Conclusion} \label{sec:conclusion}
 In this paper we study two measures of systemic risk, namely \emph{Marginal Expected Shortfall} and \emph{Marginal Mean Excess} in the presence of asymptotic independence of the marginal distributions in a bivariate set-up. We specifically observe that the very useful Gaussian copula model with Pareto-type tails satisfies our model assumptions for the MME and we can find the right rate of increase (decrease) of MME in this case. Moreover we observe that if the data exhibit \emph{asymptotic tail independence}, then we can provide an  estimate of MME that is closer to the empirical estimate (and possibly smaller) than the one that would be obtained if we did not assume \emph{asymptotic tail independence}.

 In a companion paper, \cite{Fasen:Das:2017}, we investigate various copula models and mixture models which satisfy our assumptions under which we can find asymptotic limits of MME and MES. A further direction of work would involve finding the influence of multiple system-wide risk events (for example, multiple market indicators) on a single or group of components (for example, one or more financial institutions).

\bibliographystyle{plainnat}
\bibliography{bibeshrv}

\end{document}